\numberwithin{equation}{section}
\newtheorem{teo}{Theorem}[section]
\newtheorem{prop}[teo]{Proposition}
\newtheorem{lema}[teo]{Lemma}
\newtheorem{que}{Question}
\newtheorem{coro}[teo]{Corollary}
\theoremstyle{definition}
\newtheorem{defi}[teo]{Definition}
\theoremstyle{remark}
\newtheorem*{rmk}{Remark}
\newtheorem*{ack}{Acknowledgements}
\title{Concentration estimates for algebraic intersections}
\author{Miguel N. Walsh}
\address{Departamento de Matemática, Facultad de Ciencias Exactas y Naturales, Universidad de Buenos Aires, 1428 Buenos Aires, Argentina}
\email{mwalsh@dm.uba.ar}
\begin{document}

\def\F{\mathbb{F}}
\def\Fqn{\mathbb{F}_q^n}
\def\Fq{\mathbb{F}_q}
\def\Fp{\mathbb{F}_p}
\def\Di{\mathbb{D}}
\def\E{\mathbb{E}}
\def\Z{\mathbb{Z}}
\def\Q{\mathbb{Q}}
\def\C{\mathbb{C}}
\def\R{\mathbb{R}}
\def\N{\mathbb{N}}
\def\P{\mathbb{P}}
\def\T{\mathbb{T}}
\def\K{\mathbb{K}}
\def\Dc{\mathcal{D}}
\def\modp{\, (\text{mod }p)}
\def\modN{\, (\text{mod }N)}
\def\modq{\, (\text{mod }q)}
\def\modone{\, (\text{mod }1)}
\def\Zn{\mathbb{Z}/N \mathbb{Z}}
\def\Zp{\mathbb{Z}/p \mathbb{Z}}
\def\Zan{a^{-n}\mathbb{Z}/ \mathbb{Z}}
\def\Zal{a^{-l} \Z / \Z}
\def\Pr{\text{Pr}}
\def\leftsize{\left| \left\{}
\def\rightsize{\right\} \right|}

\begin{abstract}
We present an approach over arbitrary fields to bound the degree of intersection of families of varieties in terms of how these concentrate on algebraic sets of smaller codimension. This provides in particular a substantial extension of the method of degree-reduction that enables it to deal efficiently with higher-dimensional problems and also with high-degree varieties. We obtain sharp bounds that are new even in the case of lines in $\R^n$ and show that besides doubly-ruled varieties, only a certain rare family of varieties can be relevant for the study of incidence questions.
\end{abstract}

\maketitle

\tableofcontents

\section{Introduction}

\subsection{Statement of results}

The fundamental result of Szemerédi and Trotter \cite{ST} gives an optimal bound for the number of incidences that can occur between a set of points $S$ and a set of lines $T$ in $\R^2$ in terms of the cardinality of these sets. It is clear that one cannot obtain a better estimate over $\R^3$, since placing all the points and lines inside some hyperplane one can reconstruct any configuration that occurs in $\R^2$. Nevertheless, it was shown in a landmark paper \cite{GK} of Guth and Katz that this is essentially the only obstruction in that a much stronger bound can be attained as long as no hypersurface of degree at most $2$ contains more than $O(|T|^{1/2})$ elements from $T$. This naturally raises the following question: 

\begin{que}
\label{A}
To what extent is the number of incidences produced by a family of varieties $T$ determined by how this family concentrates on algebraic sets of smaller co-dimension?
\end{que}

There has been a significant amount of effort expended in extending the methods of Guth and Katz and related ideas to understand this question. The picture has become better understood in $\K^3$, with $\K$ an arbitrary field \cite{EH,GK,GZ2,K}, with progress also made in $\R^4$ \cite{GZ,SS}, but much less has been achieved for general choices of $\K^n$. While there are interesting results that have been established in this setting, they tend to require the rather strong assumption that the family of varieties $T$ being studied only has $O(1)$ elements lying in a low degree variety of smaller co-dimension \cite{DS,SSS} or morally equivalent conditions of transversality \cite{KSS,Q,SoT,Z2}. For comparison, if for instance $T$ is a set of lines in $\K^n$, one should expect to be able to place $\sim |T|^{\frac{m-1}{n-1}}$ elements of $T$ in an $m$-dimensional plane without paying any price in the bound.

Given a set of $l$-dimensional varieties $T$ and a variety $W$, we shall write $T_W$ for those elements of $T$ lying inside of $W$. Perhaps the most natural way to measure the concentration of a general family of varieties $T$ is to consider the quantities
 $$ \mathcal{D}_m(T) = \max \left\{ \frac{\deg(T_W)}{\deg(W)} : \dim(W) = m \right\},$$
 where the maximum goes along all $m$-dimensional varieties. Notice that a typical family of $l$-dimensional varieties in $\K^n$ satisfies $\Dc_m(T) \sim \deg(T)^{\frac{m-l}{n-l}}$ for every $l \le m \le n$.
  
  In the present article we will introduce a method that produces incidence estimates with a best-possible dependence on the quantities $\Dc_m(T)$. We also show how to use additional information on the elements of $T$ to restrict the set of varieties $W$ needed in the definition of $\Dc_m(T)$ while producing the same bounds. These results allow, for example, the optimal number of lines to lie inside an $m$-dimensional plane without affecting the bounds. In particular, they provide a quantitatively strong answer to Question \ref{A} by showing that a large number of incidences can always be attributed to $T$ having a significantly high concentration on a variety of smaller codimension. 
    
  Our approach provides an alternative way of producing partitions via polynomials that does not require topological considerations and therefore works over arbitrary fields. Combining it with the usual partitioning techniques, we will also provide a correspondingly stronger bound over $\R^n$. Our partitioning method also has the unusual feature of working equally well if we are studying the incidences of a set of varieties with varieties of smaller dimension that are not necessarily points. This phenomenon is actually connected to the obstructions in extending point-incidence estimates to arbitrary dimensions, as was already noted in \cite{GZ}, although this connection is less explicit in the present work.
  
  In this article we will restrict attention to incidences between sets of varieties $S$ and $T$ with $\dim(S)=\dim(T)-1$. While we expect the approach to work in arbitrary co-dimension, this assumption simplifies considerably not only the methods but even the correct statements of the results. In larger co-dimension, besides the need of some standard assumptions to be placed in order to make non-trivial results possible, an optimal result must furthermore keep track not only of the quantities $\Dc_m(T)$ but also of some generalisations of these parameters that measure concentration in the form of degrees of intersection of specified dimension (see \cite{DS} for a similar discussion).
     
  Finally, let us remark that unlike most work in incidence geometry, we obtain results that provide an optimal dependence on $\deg(T)$ without placing a uniform bound on the degrees of the individual elements of $T$. Arguably, this makes it natural to view them as a form of quantitative intersection theory. To discuss this, suppose we are trying to estimate the number $\mathcal{P}_2(T)$ of intersection points of a set of irreducible curves $T$. Bezout's theorem gives us a bound of the form $\deg(T)^2$. On the other hand, this theorem essentially gives an equality if we can place all elements of $T$ inside of a plane and more generally, one would intuitively expect that the elements of $T$ can only have an abnormally large degree of intersection with each other if they are highly trapped inside a subvariety. We can therefore reiterate the same question as before in this context: 
  
  \begin{que}
  To what extent can we improve upon Bezout's inequality by knowing how the elements of $T$ concentrate inside varieties of smaller co-dimension?
  \end{que}
  
 Given families of varieties $S$ and $T$ with $\dim(S) \le \dim(T)$, we define the degree of incidence between $S$ and $T$ as the quantity
$$ \mathcal{I}(S,T) = \sum_{s \in S} \deg(s) | \left\{ t \in T : s \subseteq t \right\}|.$$
This is obviously the usual number of incidences when $S$ is a set of points. We have the following optimal estimate in terms of the parameters $\Dc_m(T)$, which improves on the best known bounds even in the simplest case of lines.

\begin{teo}
\label{I0}
Let $S$ and $T$ be sets of irreducible varieties in $\K^n$ with $\dim(T) =\dim(S)+1$. Then
$$ \mathcal{I}(S,T) \lesssim_n \sum_{1 \le m \le n-\dim(S)} \deg(S)^{1/m} \deg(T)^{1-1/m} \Dc_{m+\dim(S)}(T)^{1/m}.$$
\end{teo}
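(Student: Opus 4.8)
The plan is to run an iterated form of the polynomial method --- ``degree reduction'' --- building a strictly descending chain of varieties $\K^n = V_n \supseteq V_{n-1} \supseteq \cdots$, each containing every member of $S$, and reading off one term of the sum at each drop in dimension. Throughout, write $k = \dim(S)$, so that $\dim(T) = k+1$, and record that $\mathcal{I}(S,T) = \sum_{t \in T}\deg(S_t)$, where $S_t = \{s \in S : s \subseteq t\}$ and $\deg(S_t) = \sum_{s \in S_t}\deg(s)$.

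The basic step is as follows. Let $V$ be a variety of dimension $j$ with $k+1 \le j \le n$ that contains $S$ (take $V$ irreducible for now; see below for the general case), and write $T_V$ for the members of $T$ lying in $V$. By a dimension count one produces a nonzero polynomial $P$ on $V$ vanishing on all of $S$ and of degree $D \lesssim_n (\deg(S)/\deg(V))^{1/(j-k)} + O_n(1)$: the space of degree-$D$ polynomials on $V$ has dimension $\gtrsim_n \deg(V)D^j$, while vanishing on a $k$-dimensional variety of degree $\delta$ imposes $\lesssim_n \delta D^k$ conditions, so, summing over $S$, it suffices that $\deg(V)D^j \gtrsim \deg(S)D^k$. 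Set $V' = V \cap Z(P)$. Then $\dim(V') \le j-1$, while $\deg(V') \le D\deg(V)$ by Bezout and, crucially, $S \subseteq V'$ since $P$ vanishes on $S \subseteq V$. For each $t \in T_V$ with $t \not\subseteq Z(P)$, the variety $t \cap Z(P)$ has dimension at most $k$ and degree at most $D\deg(t)$, and since the members of $S_t \subseteq S \subseteq Z(P)$ are distinct irreducible $k$-dimensional subvarieties of it, $\deg(S_t) \le D\deg(t)$. Hence these ``lost'' members of $T$ together contribute at most $D\sum_{t \in T_V}\deg(t) = D\deg(T_V)$ to $\mathcal{I}(S,T)$, while the surviving members $T_{V'}$ are treated by recursing on $V'$.

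To turn each step into a term of the sum, note first that $\deg(V) \lesssim_n \deg(S)$ persists along the chain, since $\deg(V') \le D\deg(V) \lesssim_n \deg(S)^{1/(j-k)}\deg(V)^{1-1/(j-k)}$ is a weighted geometric mean of $\deg(S)$ and $\deg(V)$. Using $\deg(T_V) \le \min\{\deg(T),\, \Dc_j(T)\deg(V)\}$ together with the bound on $D$, and splitting according to whether $\deg(V)$ is larger or smaller than $\deg(T)/\Dc_j(T)$, a short computation gives
$$ D\,\deg(T_V) \lesssim_n \deg(S)^{1/(j-k)}\deg(T)^{1-1/(j-k)}\Dc_j(T)^{1/(j-k)},$$
which is exactly the $m = j-k$ term of the claimed bound. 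Since $V_n = \K^n$ (for which $\Dc_n(T) = \deg(T)$, producing the $m = n-k$ term) and the chain strictly decreases in dimension, it meets each dimension in $\{k+1,\dots,n\}$ at most once and reaches dimension $\le k$, where $T_V = \emptyset$ because no irreducible variety of dimension $k+1$ fits inside. Summing the per-step estimates over the at most $n-k$ distinct values of $m$ gives Theorem \ref{I0}.

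It remains to address two points. Reducibility of the intermediate $V$ is dealt with in the standard way, working one top-dimensional component at a time; any component already contained in $Z(P)$ --- in particular one equal to some $t \in T$ that contains $S$ --- is split off and bounded directly by $\deg(S) \le \Dc_{k+1}(T)\deg(S)$, the $m=1$ term. The genuinely delicate ingredient is the polynomial-existence estimate used in the basic step: the bound $D \lesssim_n (\deg(S)/\deg(V))^{1/(j-k)}$ must hold with an implied constant depending only on $n$, in particular uniformly over families $S$ whose individual members may have very large degree, so that the naive ``Hilbert function equals Hilbert polynomial beyond the regularity'' reasoning is unavailable. Securing this robust, topology-free polynomial partitioning over an arbitrary variety is the crux of the argument, and is precisely the new input highlighted in the introduction.
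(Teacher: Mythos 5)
Your chain-and-per-step-bound skeleton is appealing, and the arithmetic you carry out to turn $D\,\deg(T_V)$ into the $m=j-k$ term of the claimed sum is correct \emph{given} your bound on $D$. But that bound is the heart of the matter, and it is false as stated: for a general $j$-dimensional variety $V$ containing $S$, one cannot always find a nonzero $P$ of degree $D \lesssim_n (\deg(S)/\deg(V))^{1/(j-k)}$ vanishing on $S$ without vanishing on $V$. Your dimension count, ``the space of degree-$D$ polynomials on $V$ has dimension $\gtrsim_n \deg(V)D^j$,'' describes the Hilbert function of $V$ only above its regularity; below it, the Hilbert function is governed by the intermediate partial degrees $\delta_1(V)\le\cdots\le\delta_{n-j}(V)$ of Definition \ref{delta} and can be far smaller than $\deg(V)D^j$. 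Concretely, take $V=Z(L,F)\subset\K^4$ with $L$ linear and $F$ of degree $d$ irreducible on $Z(L)$, so $\dim V=2$, $\deg V=d$, $\delta_1(V)=1$, $\delta_2(V)=d$, and let $S$ be a set of $m$ generic points of $V$ with $m\ll d^3$ (and $k=0$). The Hilbert function of $V$ at degree $D<d$ is $\binom{D+3}{3}\sim D^3$, so every polynomial of degree $D\lesssim (m/d)^{1/2}\ll m^{1/3}$ that vanishes on $S$ already lies in $I(V)$; one needs $D\gtrsim m^{1/3}$. This is exactly what the paper's Theorem \ref{GSiegel0} gives, namely $(\deg(S)/\Delta_s(V))^{1/(n-(s+l))}$ for the admissible index $s$ in whose interval $\deg(S)$ falls --- a bound that depends on the $\delta_i(V)$, not just on $\deg(V)$ and $\dim V$.

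Substituting the corrected degree also destroys your bookkeeping: in the example above, the loss at the $\dim V=2$ step is $m^{1/3}\deg(T_V)$, which is controlled by the $m=3$ term $\deg(S)^{1/3}\deg(T)^{2/3}\Dc_3(T)^{1/3}$ (here $\Dc_3(T)\ge\deg(T)$ since $T\subset Z(L)$), not the $m=2$ term your indexing associates to that step. So ``one clean term per dimension drop'' does not survive contact with the true Siegel-type estimate. This is precisely why the paper does not run a single chain: it proves the equivalent Theorem \ref{I2} by a double induction on $|T|$ and on the ``level of reduction'' of $T$ (Definition \ref{reduction}), maintaining a tower $W^{(n)}\supset\cdots\supset W^{(h)}$ of varieties containing $T$ (not $S$) with tightly controlled partial degrees and a list of structural side conditions, then iterating Lemma \ref{CII} (via Lemma \ref{RP2} and Proposition \ref{nogood}) to partition $T$ so that some large piece either has every concentration parameter $\Dc_k$ halved or reduces strongly to level $h-1$. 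The index $s$ emerging from Theorem \ref{GSiegel0} at each stage is dictated by the partial-degree structure of $W^{(h)}$, not by $\dim W^{(h)}$. You correctly flag the Siegel-type estimate as the crux, but the form you attribute to it is too strong to hold, and even the true form cannot be fed into a one-pass chain without the machinery of levels, reductions and the partition argument that occupies Sections \ref{3}--\ref{5}.
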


We say $S$ is $k$-free with respect to $T$ if any subset of $S$ of size $k$ lies in at most one element from $T$. Notice that for arbitrary sets of varieties $S$ and $T$, with $\dim(T)=\dim(S)+1$, we can always take $k=(\max_{t \in T} \deg(t))^2+1$ by Bezout's theorem. When $\K=\R$ the polynomial partitioning techniques of Guth and Katz (and the refinements given in \cite{W5}) can be used to exploit the topology of $\R^n$ when studying point-incidences, strengthening the above incidence estimate if we know $k$ to be small.

\begin{teo}
\label{R}
Let $T$ be a family of irreducible algebraic curves in $\R^n$. Then, for any set of points $S \subseteq \R^n$ that is $k$-free with respect to $T$, we have the bound
$$ \mathcal{I}(S,T) \lesssim_{n} \sum_{0 \le m \le n} k^{1-\alpha(k,m)} |S|^{\alpha(k,m)} \deg(T)^{1-\alpha(k,m)} \Dc_m (T)^{\frac{k-1}{k} \alpha(k,m)},$$
with $\alpha(k,m) = \frac{k}{m(k-1)+1}$ for $m \ge 1$ and $\alpha(k,0)=0$.
\end{teo}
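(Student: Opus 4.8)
The plan is to prove Theorem~\ref{R} by running the classical (topological) polynomial partitioning of Guth--Katz on top of the degree-reduction argument behind Theorem~\ref{I0}, with an induction on the dimension $n$. The only place the field $\R$ enters is through one application of the polynomial ham-sandwich theorem: given a parameter $D\ge 1$ to be optimised, one produces a nonzero $P$ of degree $O(D)$ such that $\R^n\setminus Z(P)$ decomposes into $O(D^n)$ open cells, each containing $O(|S|/D^n)$ points of $S$. I then split the incidences into three families: (i) those taking place inside the cells; (ii) those on $Z(P)$ coming from curves \emph{not} contained in $Z(P)$; (iii) those on $Z(P)$ coming from curves contained in $Z(P)$.

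Family (ii) is immediate from B\'ezout: a curve $t\not\subseteq Z(P)$ meets $Z(P)$ in at most $\deg(t)\deg(P)$ points, so this family contributes $O(D\deg(T))$, which for the eventual choice of $D$ is dominated by the $m=n$ term. For family (i), an irreducible curve of degree $d$ enters at most $O(dD)$ cells, so $\sum_i|T_i|\lesssim D\deg(T)$, where $T_i$ denotes the curves meeting the $i$-th cell. Inside a single cell I would use only that $S$ is $k$-free with respect to $T$: this says precisely that the point--curve incidence graph contains no $K_{k,2}$, so Kov\'ari--S\'os--Tur\'an yields the base estimate $\mathcal{I}(S_i,T_i)\lesssim_k k|T_i|+|S_i|\,|T_i|^{1-1/k}$. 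Summing over the $O(D^n)$ cells, bounding $\sum_i|T_i|^{1-1/k}$ by H\"older against $\sum_i|T_i|\lesssim D\deg(T)$ and $|S_i|\lesssim |S|/D^n$, gives a cellular bound of the shape $kD\deg(T)+|S|\,D^{-(n-1)(k-1)/k}\deg(T)^{(k-1)/k}$. Optimising via $D^{1+(n-1)(k-1)/k}\asymp |S|\deg(T)^{-1/k}/k$ — note that the left exponent equals $(n(k-1)+1)/k=1/\alpha(k,n)$ — reproduces exactly the $m=n$ term $k^{1-\alpha(k,n)}|S|^{\alpha(k,n)}\deg(T)^{1-\alpha(k,n)}\Dc_n(T)^{\frac{k-1}{k}\alpha(k,n)}$, using the identity $\Dc_n(T)=\deg(T)$. (If this optimal $D$ is $<1$ one skips the partition and the base estimate alone gives $\lesssim k\deg(T)$, the $m=0$ term; if it would exceed $|S|^{1/n}$ one caps it there, whereupon the cellular part is dominated by $|S|$, which equals the $m=1$ term since $\Dc_1(T)=1$ for $T\neq\emptyset$.)

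Family (iii) is where the recursion and the quantities $\Dc_m$ enter. The curves of $T$ lying in $Z=Z(P)$ form a family $T_Z$ inside a variety of dimension $n-1$ and degree $O(D)$, and the concentration hypothesis in codimension one controls their total degree: $\deg(T_Z)\le\deg(Z)\,\Dc_{n-1}(T)=O(D\,\Dc_{n-1}(T))$. The set $S\cap Z$ remains $k$-free with respect to $T_Z$, so one recurses in dimension $n-1$: either by repeating the partitioning step \emph{inside the variety} $Z$ (Guth's polynomial partitioning on a variety), which drops the dimension and, after the analogous optimisation with an appropriately chosen partitioning degree at that level, produces the $m=n-1$ term with its factor $\Dc_{n-1}(T)^{\frac{k-1}{k}\alpha(k,n-1)}$, or, when $T_Z$ is so concentrated that further partitioning buys nothing, by falling back on Theorem~\ref{I0} (or the non-topological partitioning underlying it) applied to $(S\cap Z,\,T_Z)$, whose output is controlled by $\deg(T_Z)$ and the $\Dc_j(T_Z)\le\Dc_j(T)$. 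Iterating this dichotomy through dimensions $n-1,n-2,\dots,1$ yields one contribution for each $m$, with the factor $\Dc_m(T)^{\frac{k-1}{k}\alpha(k,m)}$ appearing precisely when the concentrated branch is taken at codimension $n-m$; assembling the recursion and checking that $\alpha(k,m)=k/(m(k-1)+1)$ is its fixed point finishes the proof. As a sanity check, taking $k=2$ collapses the right-hand side to $|S|+\deg(T)+\sum_{m\ge 2}|S|^{\frac{2}{m+1}}\deg(T)^{\frac{m-1}{m+1}}\Dc_m(T)^{\frac{1}{m+1}}$, which is the expected Szemer\'edi--Trotter-type bound and becomes $|S|^{2/3}|T|^{2/3}+|S|+|T|$ for lines in the plane.

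The step I expect to be the main obstacle is the bookkeeping in family (iii). One must run polynomial partitioning inside the successively lower-dimensional varieties rather than inside affine space, and arrange the partitioning degrees at the various levels so that the degrees of these ambient varieties — each controlled by the previous level's partitioning degree — stay comparable from level to level rather than compounding across the $n$ steps of the recursion, which would corrupt the exponents. Getting these choices right so that the power of $\Dc_m(T)$ comes out to exactly $\tfrac{k-1}{k}\alpha(k,m)$, and correctly interfacing the $k$-dependent exponents produced by the topological step with the $1/j$-type exponents coming out of Theorem~\ref{I0} in the concentrated branch, is the delicate part. A minor point, but one that matters for the stated generality, is that a single curve of $T$ may be forced across many cells when its degree is large; this is absorbed by working with $\deg(T)$ rather than $|T|$ throughout and by noting that a cell collecting many incidences from high-degree curves already forces a correspondingly large value of some $\Dc_m(T)$.
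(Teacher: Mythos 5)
Your decomposition into families (i)--(iii), the use of Kov\'ari--S\'os--Tur\'an inside cells with H\"older to produce the $m=n$ term, the handling of curves not inside $Z(P)$ by B\'ezout, and the observation that the concentration quantities $\Dc_m$ must drive the contribution from curves trapped inside $Z(P)$ are all faithful to the overall shape of the argument, and the optimisation of $D$ producing the exponent $\alpha(k,n)$ matches (\ref{mj}) in the text. Your sanity checks ($\Dc_n(T)=\deg(T)$, $\Dc_1(T)=1$, the $k=2$ collapse to Szemer\'edi--Trotter) are correct.

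However, the engine you propose for family (iii) --- recursive polynomial partitioning inside the successively lower-dimensional varieties $Z(P)$, $Z(P)\cap Z(P')$, etc., each with its own optimised partitioning degree --- is not what the paper does, and the difficulty you flag at the end (keeping the degrees of these nested ambient varieties comparable across the $n$ steps rather than compounding, and interfacing the $k$-dependent exponents of the topological step with the $1/j$ exponents of the concentrated branch) is not a bookkeeping chore to be managed later; it is precisely the obstruction this paper is built to overcome, and your sketch does not resolve it. The actual proof performs \emph{one} topological partitioning step per stage, and does so on the variety $W^{(h)}$ that the level-reduction machinery has already identified as the essentially minimal ambient variety for $T$ (not on $Z(P)$, which is a hypersurface with no such minimality), choosing the partitioning degree $M_{n-s}$ from the ladder $\delta_i(W_1^{(h)})$ of partial degrees so that the resulting polynomial $P$ automatically satisfies $\deg_R(T_P,W^{(h)})\le\deg(P)$ as in (\ref{agg}). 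After that single topological step, the points on $Z(P)\cap W^{(h)}$ are handled \emph{non-topologically} by the Theorem~\ref{I2} machinery (Lemma~\ref{T0}, Lemma~\ref{RP2}, Proposition~\ref{nogood}): the recursion is on the level $h$ of strong reduction and on $|T|$, not on the ambient dimension, and the passage to dimension $h-1$ happens via the dichotomy ``$T_r$ admits a good partition or reduces strongly to level $h-1$,'' not via another ham-sandwich step. It is this mechanism, together with the conditions (\ref{lines}) fixing the relative sizes of the $\deg(P_{n-k})$, that prevents the compounding you worry about. The paper makes this point explicitly in the introduction when contrasting with degree-reduction: the method ``allows one to find a variety of the appropriate dimension where the set $T$ concentrates,'' rather than merely a hypersurface. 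So your proposal is correct in outline for the top level but has a real gap where you hand off to recursion: without the level-reduction apparatus (Definition~\ref{reduction}, Definition~\ref{SR}, and \S\ref{4}), the descent you describe does not yield the stated exponents.
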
 

Let us write $\mathcal{P}_r(T)$ for the set of irreducible varieties of dimension $\dim(T)-1$ that lie on at least $r$ elements of $T$. The results above imply a corresponding bound for $\mathcal{P}_r(T)$ when $r > O_n(1)$. We have essentially the same estimate in the remaining range.

  \begin{teo}
\label{I1}
Let $T$ be a family of irreducible varieties in $\K^n$ of dimension $d$. Then, for every $r \ge 2$, every $k$-free subset $S \subseteq \mathcal{P}_r(T)$ satisfies
$$ \deg(S) \lesssim_n \frac{\deg(T)}{r}  \left( k^{1/2} + \sum_{1 \le m \le n-d}\left( \frac{\Dc_{m+d}(T)}{r} \right)^{1/m} \right).$$
\end{teo}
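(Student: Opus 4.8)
The plan is to pit the trivial lower bound for $\mathcal{I}(S,T)$ coming from the definition of $\mathcal{P}_r(T)$ against the upper bound of Theorem \ref{I0}, handling separately the residual range of small $r$, where the factor $k^{1/2}$ genuinely enters.

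Since every $s \in S \subseteq \mathcal{P}_r(T)$ lies in at least $r$ elements of $T$, we have $\mathcal{I}(S,T) = \sum_{s \in S}\deg(s)\,|\{t \in T : s \subseteq t\}| \ge r\deg(S)$. The hypothesis $\dim(T) = \dim(S)+1$ --- here with $\dim(S) = d-1$ --- is precisely what Theorem \ref{I0} requires, and it gives $\mathcal{I}(S,T) \lesssim_n \sum_{1 \le m \le n-d+1}\deg(S)^{1/m}\deg(T)^{1-1/m}\Dc_{m+d-1}(T)^{1/m}$. Two observations make this tractable. First, the $m=1$ summand is $\deg(S)\,\Dc_d(T)$, and $\Dc_d(T) \le 1$: any element of $T$ contained in a $d$-dimensional variety $W$ is a top-dimensional irreducible component of $W$, distinct elements of $T$ give distinct such components, so $\deg(T_W) \le \deg(W)$; hence the $m=1$ term is at most $O_n(\deg(S))$. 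Second, for $2 \le m \le n-d+1$, dividing $r\deg(S) \lesssim_n \deg(S)^{1/m}\deg(T)^{1-1/m}\Dc_{m+d-1}(T)^{1/m}$ through and isolating $\deg(S)$ (raising to the power $m/(m-1)$ costs only an $n$-dependent factor, since $m\ge 2$ keeps $1-1/m$ bounded away from $0$) gives $\deg(S) \lesssim_n \frac{\deg(T)}{r}\big(\Dc_{m+d-1}(T)/r\big)^{1/(m-1)}$, which is exactly the $m' = m-1$ term of the sum in the statement.

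Putting these together: once $r$ is larger than twice the $n$-dependent constant of Theorem \ref{I0}, the $O_n(\deg(S))$ coming from $m=1$ is absorbed into the left-hand side of $r\deg(S) \lesssim_n \deg(S) + \sum_{2 \le m \le n-d+1}(\cdots)$, and selecting the dominant remaining summand and reindexing $m \mapsto m-1$ as above yields $\deg(S) \lesssim_n \frac{\deg(T)}{r}\sum_{1 \le m \le n-d}\big(\Dc_{m+d}(T)/r\big)^{1/m}$, which already implies the claimed bound (the $k^{1/2}$ term is not needed in this range).

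The main obstacle is the remaining range $2 \le r = O_n(1)$, where the loss of the constant in the $m=1$ term blocks the absorption above and the half-power of $k$ appears. Since $1/r \sim_n 1$ here it suffices to treat $r=2$, and the target becomes $\deg(S) \lesssim_n \deg(T)\big(k^{1/2} + \sum_{1 \le m \le n-d}\Dc_{m+d}(T)^{1/m}\big)$. I would argue this directly: for each $s \in S$ fix distinct $t_1(s), t_2(s) \in T$ with $s \subseteq t_1(s) \cap t_2(s)$, so that $s$ is a $(d-1)$-dimensional component of $t_1(s)\cap t_2(s)$; grouping the elements of $S$ by the unordered pair $\{t_1(s),t_2(s)\}$, Bezout bounds the total degree of the elements of $S$ attached to a pair $\{t,t'\}$ by $\deg(t)\deg(t')$, while the $k$-freeness of $S$ bounds the \emph{number} of elements of $S$ lying in $t\cap t'$ by $k-1$. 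Feeding the resulting weighted incidence data, after a dyadic decomposition of $T$ by degree, into the concentration/partitioning machinery behind Theorem \ref{I0} produces the sum $\sum_m \Dc_{m+d}(T)^{1/m}$ together with the term $k^{1/2}\deg(T)$, the half-power of $k$ arising from balancing the Bezout bound $\deg(t)\deg(t')$ against the cardinality bound $k-1$. The delicate step --- the one I expect to demand the most care --- is carrying out this balancing uniformly over the widely varying degrees of the elements of $T$: the $k$-free hypothesis controls cardinalities whereas $\mathcal{I}(S,T)$ and Bezout control degrees, and reconciling the two requires both the dyadic decomposition of $T$ and a careful accounting of which pairs $\{t,t'\}$ actually carry elements of $S$.
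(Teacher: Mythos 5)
Your high-level strategy — pitting the trivial lower bound $r\deg(S)\le\mathcal{I}(S,T)$ against the upper bound from Theorem \ref{I0}, then rearranging — is the right one, and your algebra for the large-$r$ range is correct: the observation that $\Dc_d(T)\le 1$ disposes of the $m=1$ term, raising to the power $m/(m-1)$ and reindexing $m\mapsto m-1$ produces exactly the $r$-dependent summands, and absorption of $O_n(\deg(S))$ into $r\deg(S)$ works once $r$ exceeds the implicit constant.

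The gap is in the small-$r$ range, and it is not merely a matter of care. You propose a fresh ``direct'' argument (dyadic decomposition of $T$ by degree, balancing Bezout's $\deg(t)\deg(t')$ against the cardinality bound $|S_{t,t'}|\le k-1$, then re-feeding into the partitioning machinery) and you yourself flag the reconciliation of cardinality and degree bounds as the delicate step. That step is genuinely hard precisely because nothing forces the degrees $\deg(s)$ to be comparable across a dyadic block, so $|S_{t,t'}|\le k-1$ does not directly cap $\deg(S_{t,t'})$; your sketch does not close this, and I don't see how to make it close without essentially re-doing the whole argument. What you are missing is that the paper does not need a new argument for small $r$ at all: it injects the $k$-free hypothesis once, at the base case of the induction proving Theorem \ref{I2}, replacing the trivial bound $\mathcal{I}(S,T)=O_n(\deg(S))$ for $|T|=O_n(1)$ with $\mathcal{I}(S,T)\le\deg(S)+O_n(\min\{k,\max_t\deg(t)^2\})\le\deg(S)+O_n(k^{1/2}\deg(T))$ (this is exactly the AM--GM balancing you had in mind, but performed once on a bounded family, where it is painless), and then lets the existing induction propagate it. The outcome is a strengthened Theorem \ref{I0} in which the $m=1$ summand $\deg(S)\,\Dc_d(T)$ is replaced by $\deg(S)+O_n(k^{1/2}\deg(T))$. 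With that in hand, a single rearrangement covers all $r\ge2$ at once: $r\deg(S)\le\deg(S)+O_n(k^{1/2}\deg(T))+\sum_{m\ge2}(\cdots)$, and $(r-1)\ge r/2$ absorbs the leading $\deg(S)$ uniformly, with no case split on the size of $r$. So the fix is not a new argument for $r=2$, but a one-line change to the base case followed by the very rearrangement you already carried out.
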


It is important to remark that the ideas of this article and most of those cited above actually have their origin in Dvir's solution of the Kakeya problem over finite fields \cite{D}, where he introduced the polynomial method in this context (see \cite{G,T0,W}). Dvir's result can be deduced from Theorem \ref{I0}, which allows us to also view it as part of this broader phenomenon of concentration estimates.

Our results are consistent with conjectures due to several authors \cite{G4,SSS,Z}. Essentially, they predict that under the additional assumption that all elements of $T$ belong to some specific families, then one can impose some corresponding restrictions on the kind of varieties $W$ used in the definition of $\Dc_m(T)$ and obtain the same results. A general result in this direction is discussed in the next part of this introduction. On the contrary, the results we have stated thus far do not place any assumptions on the elements of $T$ and constitute the best-possible estimates in this setting, as can be seen via constructions similar to those of \cite[\S 8.3]{W5}.

It may be important to remark that even in the simplest case of lines in $\R^n$ this goes beyond what was achievable by previous methods. For example, suppose $L$ is a set of lines with concentration values $\Dc_m(L)$ coinciding with those that would arise if we chose $|L|^{1-\frac{m-1}{n-1}-c}$ $m$-planes generically, with each of them containing $|L|^{\frac{m-1}{n-1}+c}$ lines that are not abnormally concentrated in subvarieties of these $m$-planes. In this case, both the current and previous approaches provide the expected bound $\mathcal{P}_2(L) \lesssim |L|^{\frac{n}{n-1}}$ if $c \le 0$, which is when the values $\Dc_m(L)$ coincide with those of a generic set of lines. On the other hand, if $c > 0$, previous methods, which are based on standard degree-reduction, are only capable of achieving a bound of the form $\mathcal{P}_2(L) \lesssim_n |L|^{\frac{n}{n-1}+c \frac{n-2}{n-m}}$ while Theorem \ref{I1} returns the expected bound $\mathcal{P}_2(L) \lesssim_n |L|^{\frac{n}{n-1}+\frac{c}{m-1}}$. Notice that the bounds only coincide when $m=2$, while the latter is substantially stronger as soon as $m \ge 3$. The comparative advantage of Theorem \ref{I1} increases further if we allow $T$ to consist of elements of large degree.

The main difference is that degree-reduction is only able to efficiently construct a hypersurface containing the given set $T$, while the present method allows one to find a variety of the appropriate dimension where the set $T$ concentrates. This is particularly important if one wishes to carry a finer analysis of the relevant varieties, as done in Theorem \ref{p} below. In fact, the main contribution of this article can be seen as a substantial extension of the method of degree-reduction to make it able to deal with higher-dimensional problems and also with high-degree varieties.

Before ending this part of the introduction, it may be worthwhile to point out that the above estimates make rigorous the idea that a family of subvarieties $T$ of an ambient variety $V$ should find it harder to intersect as the degree of $V$ gets larger, as was already noted in \cite{W5}.

\subsection{Restricted families}

It is natural to ask the following further question.

\begin{que}
If all the elements of $T$ lie on a particular family of varieties, can we obtain the same results with the definition of $\Dc_m(T)$ now restricted to varieties $W$ of a corresponding special type?
\end{que}

The answers known in the literature to this question proceed by showing that if many elements from a fixed type of varieties are incident to a point of a variety $W$ in which they are contained, this usually forces this point to have some special property with respect to $W$ that can be verified using polynomials of controlled degree. This information can in turn be used to restrict the set of varieties $W$ relevant for the study of $T$. The methods of this paper show that a similar phenomenon holds in general.

To formalise the idea just described we need to introduce some notation and definitions. Some further discussion is provided after stating Theorem \ref{p} below. Given an irreducible variety $W \subseteq \K^n$, we write $\delta(W)$ for smallest $\delta$ for which there exist polynomials $g_1,\ldots,g_r$ of degree at most $\delta$ such that $W$ is an irreducible component of their zero set $Z(g_1,\ldots,g_r)$. Equivalently, $\delta(V)$ is the smallest degree needed to set-theoretically define $V$ over a Zariski-dense subset. We say $W$ is entangled to another irreducible variety $W'$ if a polynomial of degree $\lesssim_n \delta(W) + \delta(W')$ vanishes identically on $W$ if and only if it does on $W'$. It is possible to see that a variety $W \subseteq \K^n$ can only be entangled to $O_n(1)$ other irreducible varieties $W'$ and that they all satisfy $\delta(W) \sim_n \delta(W')$, $\deg(W) \sim_n \deg(W')$ and $\dim(W)=\dim(W')$.

 As previously remarked, if a point of $W$ is incident to many subvarieties of $W$ of a specific kind, this usually forces this point to have some special property with respect to $W$. Because of this, given some property $p$ an $(l-1)$-dimensional irreducible variety may have with respect to a given variety (e.g. being a singular point), we say that a family of $l$-dimensional varieties $T \subseteq \K^n$ is a $p$-set if for every irreducible variety $W$ with $\dim(T) < \dim(W) < n$, there is some $r=O_n(1)$ such that every element of $\mathcal{P}_r(T_W)$ has property $p$ with respect to $W$. 
 
 Properties relevant in incidence geometry involve the behaviour on tangent spaces and this makes them verifiable using polynomials built out of any set of polynomials that can be used to define this tangent space. Since for a generic choice of $W \subseteq \K^n$ we can find polynomials of degree at most $\delta(W)$ defining the tangent space on a Zariski-dense subset of $W$, most natural properties for incidences will be verifiable using polynomials of degree $O_n(\delta(W))$. We therefore say $p$ is a $\delta$-property of an irreducible variety $W$ if the set of $(l-1)$-dimensional irreducible subvarieties of $W$ having property $p$ with respect to $W$ lie in the zero set of a polynomial of degree $O_n(\delta(W))$ that does not vanish identically on $W$. Varieties for which $p$ is not a $\delta$-property will play a special role. We write $\mathcal{C}_p$ for those irreducible varieties that are entangled to an irreducible variety for which $p$ is not a $\delta$-property.

Notice that the usual properties that are relevant in incidence geometry, like a point of a variety being singular, flecnodal, double-flecnodal, flexy, etc., can be seen to be $\delta$-properties of hypersurfaces in $\K^3$, except precisely for those hypersurfaces that turn out to be relevant for the study of the corresponding incidence problems (e.g. doubly ruled surfaces). With this in mind, let us write $\Dc_m^p(T)$ for the expression $\Dc_m(T)$, but where the maximum is now restricted to varieties $W$ in $\mathcal{C}_p$.

We have the following result, showing that in the general situation just described, we can indeed guarantee that it is only the rather restricted set of varieties $\mathcal{C}_p$ that can be relevant for the study of the incidences of $T$.

\begin{teo}
\label{p}
In both Theorem \ref{I0} and Theorem \ref{R}, if $T$ is a $p$-set, then we can replace $\Dc_m(T)$ by $\Dc_m^p(T)$.
\end{teo}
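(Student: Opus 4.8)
The plan is to rerun the proofs of Theorems \ref{I0} and \ref{R} verbatim, except at the steps where a concentration factor $\Dc_{m+\dim(S)}(T)$ is produced, and to insert there a \emph{descent} exploiting the $p$-set hypothesis. Each such factor enters because the partitioning machinery has localised a portion of the count to an irreducible variety $W$ with $\dim(W)=m+\dim(S)$ and $\deg(T_W)/\deg(W)\le\Dc_{\dim(W)}(T)$, the relevant incidences being those of $S_W$ with $T_W$. I will show that whenever $\dim(T)<\dim(W)<n$ and $W\notin\mathcal{C}_p$ this localised problem can be pushed down to a controlled collection of subvarieties of $W$ of strictly smaller dimension; iterating, after at most $n$ rounds every variety still responsible for the bound either lies in $\mathcal{C}_p$, so that its concentration is recorded by $\Dc^p_\bullet(T)$, or has dimension $\le\dim(T)$, in which case $T_W$ contains at most one element and the corresponding count is $\lesssim_n\deg(S)$, hence absorbed by the $m=1$ term of the respective bound. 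Note the extreme terms $m=1$ and $m=n-\dim(S)$ are then untouched, consistently with the $p$-set hypothesis concerning only dimensions strictly between $\dim(T)$ and $n$.

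The descent step itself goes as follows. Let $W$ be irreducible with $\dim(T)<\dim(W)<n$ and $W\notin\mathcal{C}_p$; since $W$ is entangled to itself, $p$ is then a $\delta$-property of $W$. By hypothesis there is $r=O_n(1)$ with every element of $\mathcal{P}_r(T_W)$ having property $p$ with respect to $W$. Split $S_W$ according to whether an element lies in fewer than $r$ elements of $T_W$ (the ``light'' part, which contributes at most $(r-1)\deg(S_W)\lesssim_n\deg(S)$ and is harmless) or in at least $r$ of them (the ``heavy'' part). Every heavy element is a $(\dim(T)-1)$-dimensional irreducible subvariety of $W$ lying in $\mathcal{P}_r(T_W)$, hence has property $p$ with respect to $W$, hence — $p$ being a $\delta$-property of $W$ — lies in the zero set of a single polynomial $g$ with $\deg(g)=O_n(\delta(W))$ and $g|_W\not\equiv 0$. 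Thus the heavy elements all lie in $W^\ast:=Z(g)\cap W$, of dimension $\dim(W)-1$ and degree $\lesssim_n\delta(W)\deg(W)$. Splitting $T_W$ according to whether $t\subseteq W^\ast$: for $t\not\subseteq W^\ast$ a heavy $s\subseteq t$ forces $s\subseteq t\cap Z(g)$, so $s$ is a component of this proper intersection and $\sum_{\text{heavy }s\subseteq t}\deg(s)\le\deg(t)\deg(g)\lesssim_n\delta(W)\deg(t)$, a Bézout-type bound summed over $t$; for $t\subseteq W^\ast$ one recurses the localised incidence problem into the irreducible components $W'$ of $W^\ast$, each of dimension $<\dim(W)$, to which the same dichotomy applies.

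Reassembling along the resulting descent tree, invoking $\deg(T_{W'})/\deg(W')$ only at the $\mathcal{C}_p$-leaves, where it is bounded by $\Dc^p_{\dim(W')}(T)$, and the trivial $\deg(S)$-contributions elsewhere, recovers the bounds of Theorems \ref{I0} and \ref{R} with every $\Dc_m(T)$ improved to $\Dc^p_m(T)$. I expect the main obstacle to be the quantitative bookkeeping across the descent: naively, $\deg(W^\ast)\lesssim_n\delta(W)\deg(W)$ and the Bézout error $\delta(W)\deg(T_W)$ compound over $n$ rounds into factors polynomial of degree $\sim 2^n$ in the data, destroying the estimate. This is precisely why $\delta(\cdot)$, rather than $\deg(\cdot)$, is the invariant built into the definitions: since $W^\ast=Z(g)\cap W$ is cut out on a Zariski-dense subset by $g$ together with the degree-$\delta(W)$ equations defining $W$, every component $W'$ satisfies $\delta(W')\lesssim_n\delta(W)$, so the degrees $O_n(\delta(\cdot))$ of the polynomials furnished by the $\delta$-property — and hence the Bézout errors — stay within a constant factor of their initial values throughout the bounded descent, while the quantities actually entering the final bound are never worsened. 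The remaining bookkeeping — that $W\notin\mathcal{C}_p$ is equivalent to $p$ being a $\delta$-property of every variety entangled to $W$, and that entangled varieties agree up to $O_n(1)$ in $\delta$, $\deg$ and $\dim$ — is routine, and the argument for Theorem \ref{R} is identical once the descent is performed inside each cell of the polynomial partition before recursing.
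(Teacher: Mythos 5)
Your central ingredient — cutting out the heavy points with a $\delta$-property polynomial $g$ of degree $O_n(\delta(W))$, and noting that $\delta(\cdot)$ rather than $\deg(\cdot)$ is the invariant that stays controlled — is indeed the same as the paper's, but the way you deploy it (a free geometric descent through the components of $Z(g)\cap W$, reassembled along a tree) rests on a misdiagnosis of the proof of Theorem \ref{I0} and does not close. The factors $\Dc_m(T)$ do not appear because the argument has localised a sub-count to an $\mathcal{I}(S_W,T_W)$ with $\deg(T_W)/\deg(W)\le\Dc_{\dim W}(T)$; they enter only through the relative-degree bound (\ref{sm}), $\deg_R(\tilde T,W^{(h)})\lesssim(\deg(S)\Dc_s(T)/\deg(T))^{1/(s-\dim S)}$, via $\Delta_{n-s}(W_1^{(h)})\sim\deg(W^{(s)})\ge\deg(T)/\Dc_s(T)$, and $W^{(s)}$ contains \emph{all} of $T$, not a localised piece. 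So there is no step of the proof at which your descent could be spliced in; what can be changed is the relative-degree bound itself. That is exactly what the paper does in \S 6: it separates the components of $W^{(h)}$ into those in $\mathcal{C}_p$ (union $W^\ast$) and the rest, re-runs (\ref{sm}) with $\Dc^p_h$ when $\deg(T_{W^\ast})\gtrsim\deg(T)$, and otherwise uses the $\delta$-property polynomial $f$ — together with a second polynomial $g$ killing the entangled-pair intersections $W_i^{(h)}\cap W_j^{(h)}$, a step you omit and which the very definition of $\mathcal{C}_p$ via entanglement is designed to require — directly as the bound on $\deg_R(T_{fg},W^{(h)})$, and then re-runs the inductive proof of Theorem \ref{I2} with that input.

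Even taken on its own terms, the descent has gaps. (i) The Bezout errors you accumulate, $O_n(\delta(W)\deg(T_W))$ at each node, are not present in the target bound and you give no mechanism for absorbing them; the paper absorbs the analogous error by \emph{strengthening the inductive statement} to allow an extra additive term $O_{B^{(l)},h}(\deg(T)\deg_R(T,W^{(h+1)}))$, which works because $\deg(fg)=O_n(\delta(W^{(h)}))\sim\deg_R(T,W^{(h+1)})$ — a modification of the induction hypothesis your plan never makes. (ii) ``Invoking $\deg(T_{W'})/\deg(W')$ at the $\mathcal{C}_p$-leaves'' is not an incidence estimate for $\mathcal{I}(S_{W'},T_{W'})$; you would need a recursive bound at the leaves, and nothing in your tree is a decreasing inductive quantity in the required sense. (iii) Your stability claim $\delta(W')\lesssim_n\delta(W)$ for a component $W'$ of $Z(g)\cap W$ is not automatic: $W'$ need only be a component of the partial intersection $Z(g)\cap W$, not of $Z(g,g_1,\ldots,g_t)$ with $g_i$ witnessing $\delta(W)$, since a higher-dimensional component of $Z(g_1,\ldots,g_t)$ intersected with $Z(g)$ may strictly contain $W'$; the paper never leaves the controlled chain $W^{(n)}\supseteq\cdots\supseteq W^{(h)}$ precisely to avoid this. (iv) For Theorem \ref{R}, ``performing the descent inside each cell of the polynomial partition'' is not meaningful — the cells of a real polynomial partition are open semialgebraic sets, not varieties; the paper instead adjusts the polynomial $P$ in (\ref{agg}) using the $\delta$-property at the level of $W^{(s)}$, and the cell-counting argument of Theorem \ref{Rh} is re-run verbatim.
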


The intention of this formulation is mostly to be indicative of what kind of results are easy to attain using our methods. It is possible that further progress can be achieved by mixing our arguments with certain tools with which they are compatible, like truncated partitionings. Nevertheless, once Theorem \ref{p} is combined with corresponding concentration estimates or classification of special surfaces, it already allows one to unify a large number of results in the literature, including the Szemerédi-Trotter theorem \cite{ST}, the Pach-Sharir theorem \cite{PS}, Dvir's theorem \cite{D}, the joints problem \cite{GK0}, the Guth-Katz theorem \cite{GK}, Köllar's bound \cite{K}, Bourgain's conjecture over arbitrary fields as established by Ellenberg and Hablicsek \cite{EH}, the bounds of Dvir and Gopi \cite{DG} and Hablicsek and Scherr \cite{HS}, as well as the result of Guth and Zahl on constructible families \cite{GZ2}. As it was mentioned before, it should be possible to extend the results to the whole range $\dim(S) \le \dim(T)$, thus also recovering a result of the author \cite{W5} subsuming further previous work \cite{BK,BS,CEGSW,ES,FPSSZ,KMSS,LSZ,Z1}.

We finish this part of the introduction with some last comments on Theorem \ref{p}. Let us write $\mathcal{J}_{\C^n}$ for the set of irreducible varieties $W \subseteq \C^n$ having no choice of $x \in W$ and $g_1,\ldots,g_{n-\dim(W)} \in I(W)$ of degree $\lesssim_n \delta(W)$ such that the corresponding Jacobian matrix $\text{Jac}(g_1,\ldots,g_{n-\dim(W)})(x)$ has maximal rank. Of course, the elements of $\mathcal{J}_{\C^n}$ are rather rare if they exist. On the other hand, for a property $p$, write $\mathcal{C}_p^{\ast}$ for those irreducible varieties $W$ for which a generic $(l-1)$-dimensional irreducible subvariety of $W$ has property $p$ with respect to $W$. Notice that, given a set of varieties $T$ in some class, it is not hard to show that one can take $p$ to be a (doubly-)flecnodal property with respect to which $T$ is a $p$-set and which forces the elements of $\mathcal{C}_p^{\ast}$ to be generically doubly ruled by elements in the class of $T$ (similarly as it is done in \cite{GZ2}). Therefore, the ideal classification one could hope for a given class of incidence problems in $\R^n$ (e.g. incidences of points and lines) is that we only care about the concentration of $T$ on elements of $\mathcal{C}_p^{\ast}$ (e.g. \cite[Conjecture 4.1]{G4}). What Theorem \ref{p} essentially shows is that we only care about the concentration of $T$ on elements of $\mathcal{C}_p^{\ast} \cup \mathcal{J}_{\C^n}$. In particular, for any class of problems the exceptions always lie on the fixed family $\mathcal{J}_{\C^n}$. 

Theorem \ref{p} also shows that the larger the degree of a variety $W \in \mathcal{C}_p^{\ast} \cup \mathcal{J}_{\C^n}$ is, the larger $\deg(T_W)$ needs to be for $W$ to be significant in the incidence problem. It would of course be interesting to know examples of varieties $W$ in $\mathcal{J}_{\C^n}$ but unfortunately the author knows none. The following is a very simple question in this direction: does there exist an absolute constant $C$ depending only on $n$ such that every non-ruled irreducible two-dimensional variety $V$ in $\C^n$ contains at most $C \delta(V) \deg(V)$ lines?

\subsection{Overview of the method}
\label{OM}

We now provide an overview of the method. For concreteness, let us first suppose we are trying to estimate the number of $2$-rich points produced by a set of lines $T$ in $\K^3$ in the form of Theorem \ref{I1}. For a generic choice of $T$, these lines should not exhibit any abnormal concentration inside of a surface of $\K^3$ and therefore we expect a bound of the form $\deg(P_2(T)) \le K |T|^{3/2}$ with $K=O(1)$ in this case.

Let us assume for the moment that every set of lines behaves similarly to a generic choice of lines and show a way to establish this estimate if that was the case. If $|T|=O(1)$ the bound is obvious, so let us proceed by induction on the size of $T$. To do this, we would like to find a polynomial $f$ that allows us to partition $T$ in an appropriate way. Since $T$ is generic, this is easy. Indeed, if we pick a subset $T_1 \subseteq T$ of size $\tau |T|$ for some small $\tau > 0$, we can find a polynomial $f$ of degree $\lesssim \tau^{1/2} |T|^{1/2}$ vanishing on $T_1$. If $\tau$ is sufficiently small, we see that this polynomial cannot vanish on all of $T$ as this would be an abnormally small polynomial vanishing on $T$ and thus contradict our assumption that $T$ behaves like a generic choice of lines. In fact, in a generic situation we can guarantee that the elements of $T_1$ are the only ones of $T$ inside of $Z(f)$, so let us assume this for the sake of discussion. Let us now write $T_2=T \setminus T_1$. We see that every element of $\mathcal{P}_2(T) \setminus \left( \mathcal{P}_2(T_1) \cup \mathcal{P}_2(T_2) \right)$ must lie inside of an element of $T_1$, therefore inside of $Z(f)$, and also inside of an element of $T_2$. But an element of $T_2$ can only intersect $Z(f)$ in $\deg(f) \lesssim \tau^{1/2} |T|^{1/2}$ points. We therefore conclude by induction that
\begin{equation}
\begin{aligned}
 \mathcal{P}_2(T) &\le \mathcal{P}_2(T_1) + \mathcal{P}_2(T_2) + O(\tau^{1/2} |T_2| |T|^{1/2}) \\
 &\le K |T_1|^{3/2} + K |T_2|^{3/2} + O_{\tau}(|T|^{3/2}) \\
 &\le |T|^{3/2} (K \tau^{3/2} + K (1-\tau)^{3/2} + O_{\tau}(1)) \\
 &\le K |T|^{3/2},
 \end{aligned}
 \end{equation}
 as long as $\tau \sim 1$ and $K$ is chosen sufficiently large with respect to $\tau$.
 
 This approach is appealing because it suggests a way of using a form of polynomial partitionings that does not require us to exploit the topology of $\R$ and therefore is viable over arbitrary fields. Furthermore, this approach also overcomes some obstructions that arise when trying to extend the real partitioning techniques to arbitrary dimensions. Of course, given that the heuristic we have given uses properties of generic sets that do not hold in general, it is not clear at all that something like this strategy can be carried out in practice.
 
 Given a finite set of varieties $T$ and a polynomial $f$, we shall write $T_f$ for those elements of $T$ contained inside of $Z(f)$. We will show that the approach we have described can be turned into a rigorous scheme, allowing us to control the degree of intersection between families of varieties by a systematic use of the following very simple observation.

  \begin{lema}
 \label{CII}
 Let $\K$ be an algebraically closed field. Let $f \in \K[x_1,\ldots,x_n]$ and let $S$, $T$ be finite sets of irreducible varieties in $\K^n$ with $\dim(T)=\dim(S)+1$. Then
 \begin{equation}
 \label{Ceq}
  \mathcal{I}(S,T) \le \mathcal{I}(S_f,T_f) + \mathcal{I}(S \setminus S_f, T \setminus T_f) + \deg(T \setminus T_f) \deg(f).
  \end{equation}
 \end{lema}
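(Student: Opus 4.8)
The plan is to prove \eqref{Ceq} by a direct bookkeeping over the incident pairs counted by $\mathcal{I}$. Writing $\mathcal{I}(S,T) = \sum \deg(s)$, the sum running over all pairs $(s,t) \in S \times T$ with $s \subseteq t$, I would partition these pairs into three classes: (i) those with $t \in T_f$; (ii) those with $t \notin T_f$ and $s \notin S_f$; (iii) those with $t \notin T_f$ and $s \in S_f$. This is exhaustive and the classes are disjoint, so $\mathcal{I}(S,T)$ is the sum of the $\deg(s)$-masses of the three classes, and it suffices to bound each mass by the corresponding term of \eqref{Ceq}.

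The first two classes match their terms exactly. If $(s,t)$ lies in class (i) then $s \subseteq t \subseteq Z(f)$ forces $s \in S_f$, so the mass of class (i) is $\sum_{s \in S_f} \deg(s)\,|\{t \in T_f : s \subseteq t\}| = \mathcal{I}(S_f,T_f)$; the mass of class (ii) is visibly $\mathcal{I}(S \setminus S_f, T \setminus T_f)$. For class (iii), fix $t \in T \setminus T_f$, so $f$ does not vanish identically on the irreducible variety $t$; then $Z(f)\cap t$ is a proper closed subset, each of whose irreducible components has dimension at most $\dim(t)-1$, and the sum of the degrees of these components is at most $\deg(f)\deg(t)$ by Bezout's inequality. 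Any $s \in S_f$ with $s \subseteq t$ is an irreducible subvariety of $Z(f)\cap t$ of dimension $\dim(S) = \dim(T)-1 = \dim(t)-1$, hence equals one of those components; since the elements of $S$ are distinct, $\{s \in S_f : s \subseteq t\}$ injects into the set of components of $Z(f)\cap t$, so $\sum_{s \in S_f,\, s\subseteq t}\deg(s) \le \deg(f)\deg(t)$. Summing over $t \in T \setminus T_f$ bounds the mass of class (iii) by $\deg(f)\deg(T \setminus T_f)$, and adding the three estimates gives \eqref{Ceq}.

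The only ingredient that is not pure bookkeeping is the Bezout-type bound $\deg(Z(f)\cap t)\le \deg(f)\deg(t)$ for the section of an irreducible variety by a hypersurface, which is standard over an algebraically closed field; and it is exactly here that the hypothesis $\dim(T)=\dim(S)+1$ enters, guaranteeing that each incident $s$ fills out a whole codimension-one component of $Z(f)\cap t$ rather than being buried inside one, which is what makes the degree count additive.
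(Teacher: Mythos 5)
Your proof is correct and takes essentially the same route as the paper's: decompose the incident pairs according to whether $s\in S_f$ and $t\in T_f$, observe that $\mathcal{I}(S\setminus S_f,\,T_f)=0$ because $s\subseteq t\subseteq Z(f)$ forces $s\in S_f$, and bound $\mathcal{I}(S_f,\,T\setminus T_f)$ via Bezout's inequality applied to $Z(f)\cap t$ for each $t\notin T_f$. The only difference is that you spell out the Bezout step (each incident $s$ equals a full codimension-one component of $Z(f)\cap t$), which the paper leaves as an immediate consequence of Lemma~\ref{Bezout}.
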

 
 \begin{proof}
 Since we clearly have $\mathcal{I}(S \setminus S_f, T_f) = 0$, it suffices to show that $\mathcal{I}(S_f,T \setminus T_f) \le \deg(T \setminus T_f) \deg(f)$, which is an obvious consequence of Bezout's theorem (Lemma \ref{Bezout} below).
 \end{proof}
 
 That not every family of algebraic varieties is generic is already accounted for in the statement of Theorem \ref{I0}, which gives an optimal dependence on how the family concentrates on algebraic sets of smaller co-dimension. However, we cannot use Lemma \ref{CII} to establish Theorem \ref{I0} without some enhanced control on the genericity of $T_1$ or $T_2$ via the parameters $\Dc_m(T_i)$. Otherwise, evaluating the first two terms of (\ref{Ceq}) by induction and applying Hölder's inequality, we already obtain the bound for $\mathcal{I}(S,T)$ we wish to attain while we still have not dealt with the third term.
 
 To overcome this, we will use an adequate iteration of Lemma \ref{CII} to construct a finite partition of $T$ such that, for some component $T_r \subseteq T$ of this partition, we can obtain a stronger incidence estimate that allows us to absorb all the additional terms that arise from the use of Lemma \ref{CII}. Starting with a family of varieties $T$ that is highly concentrated inside a certain irreducible variety $V$ of dimension $d$, we accomplish the above task by showing that we can always use Lemma \ref{CII} to partition $T$ in a controlled manner such that an element $T_r$ of this partition has size comparable to $T$ and satisfies the following dichotomy: either $T_r$ is less-concentrated than $T$ in the sense that $\Dc_m(T_r) < \Dc_m(T)/2$ for an adequate value of $m \ge d$, or in fact $T_r$ is highly concentrated on an irreducible variety $W \subseteq V$ of dimension $d-1$. This is established by exploiting the Siegel-type lemmas proved by the author in \cite{W5}.
 
 Even then, this will only work if at each application of Lemma \ref{CII} we are using a partitioning polynomial $f$ of adequately small degree. To this end, our methods crucially ensure that in the setting discussed in the previous paragraph, if $D$ is the smallest degree of a polynomial vanishing on all elements of $T$ without vanishing identically on $V$, then the partitioning polynomials $f$ can always be taken to have degree $\lesssim_n D$. 
 
This preservation of the degree makes the method particularly flexible, since it allows us to incorporate any a priori bound we might have on the degree of a subvariety containing $T$. In particular, since the usual partitioning techniques over $\R^n$ (and the generalisations to arbitrary varieties $V \subseteq \R^n$ obtained in \cite{W5}) allow us to estimate all incidences occurring outside of a subvariety of controlled degree, this information can simply be used as an input in the above method to produce stronger bounds. This leads to Theorem \ref{R}. Similarly, the subvariety arising from the implicit polynomial in the definition of a $p$-set can be used to obtain Theorem \ref{p}. The approach would be equally well-suited to any other way we have of bounding the relative degree, including truncated partitionings with low or medium degree polynomials.

\subsection{Organisation of the paper}

The rest of this article is organised as follows. In Section \ref{2} we review some algebraic preliminaries as well as certain results of \cite{W5} that we shall need. In Section \ref{3} we introduce and discuss the main definitions used in the proof and show how, given a set of varieties $T$, we can use a polynomial of small degree to pass to a subset of $T$ with better concentration properties. In Section \ref{4} we develop the main tools needed to handle the partitioning procedure we shall use. Finally, the proof of Theorem \ref{I0} is carried out in Section \ref{5}, while in Section \ref{6} we show how to modify the argument to establish Theorem \ref{R} and Theorem \ref{p}.

\begin{ack}
Part of this work was carried while the author was a Clay Research Fellow and a Fellow of Merton College at the University of Oxford.
\end{ack}

\section{Preliminaries}
\label{2}

In this section we review some of the notation (\S \ref{Not}) and algebraic preliminaries (\S \ref{AP}) we shall need. In \S \ref{ERD} we state some results from \cite{W5} that will be used in the rest of the article. Finally, \S \ref{RBP} contains a summary of the relationship between some of the parameters used in the proof of Theorem \ref{I0}.

\subsection{Notation}
\label{Not}

Given parameters $a_1,\ldots,a_r$ we shall use the asymptotic notations $X \lesssim_{a_1,\ldots,a_r} Y$ or $X=O_{a_1,\ldots,a_r}(Y)$ to mean that there exists some constant $C$ depending only on $a_1,\ldots,a_r$ such that $X \le C Y$. We write $X \sim_{a_1,\ldots,a_r} Y$ if $X \lesssim_{a_1,\ldots,a_r} Y \lesssim_{a_1,\ldots,a_r} X$. If $A=\left\{ a_1, \ldots, a_r \right\}$ is a set of parameters, we will abbreviate $X \lesssim_{a_1,\ldots,a_r} Y$ as $X \lesssim_A Y$. We shall write $|S|$ for the cardinality of a set $S$.

Given a field $\K$ and polynomials $f_1,\ldots,f_r \in \K[x_1,\ldots,x_n]$ we will write
$$ Z(f_1,\ldots,f_r) = \left\{ x \in \K^n : f_1(x) = \cdots = f_r(x) = 0 \right\},$$
for the corresponding zero set. For an algebraic closed field $\K$ and an irreducible algebraic variety $V \subseteq \K^n$ we write $\deg(V)$ for the degree of its projective closure and more generally, for an algebraic set $V$ with irreducible components $V_1,\ldots,V_s$ we write $\deg(V) = \sum_{i=1}^s \deg(V_i)$. By an algebraic set of dimension $d$ we mean an algebraic set all of whose irreducible components have dimension $d$. In particular, if we write $\dim(T)=d$ we mean that every irreducible component of $T$ has dimension $d$.

 Given a set of varieties $T$ and a polynomial $f$, we write $T_{f}$ for those elements of $T$ contained inside of $Z(f)$. Similarly, if $W$ is an algebraic set, we write $T_W$ for those elements of $T$ that lie inside of $W$.

\subsection{Algebraic preliminaries}
\label{AP}

We will be using the following form of Bezout's inequality  \cite[Theorem 7.7]{Hart}.

\begin{lema}[Bezout's inequality]
\label{Bezout}
Let $\K$ be an algebraically closed field. Let $W \subseteq \K^n$ be an irreducible variety and $f_1,\ldots,f_s \in \K[x_1,\ldots,x_n]$ polynomials. Write $Z_1,\ldots,Z_r$ for the irreducible components of $Z(f_1,\ldots,f_s) \cap W$. Then
$$ \sum_{i=1}^r \deg(Z_i) \le \deg(W) \prod_{j=1}^s \deg(f_j).$$
\end{lema}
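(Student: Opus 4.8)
The plan is to induct on the number $s$ of polynomials, reducing everything to the single-polynomial case, which in turn follows from the classical statement that cutting an irreducible projective variety by a hypersurface not containing it multiplies the degree. Throughout I would assume the $f_j$ are nonzero, the statement being trivial or vacuous otherwise.

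For the base case $s=1$: if $f$ vanishes identically on $W$ then $Z(f)\cap W=W$ has the single component $W$, of degree $\deg(W)\le\deg(W)\deg(f)$, and we are done. Otherwise $f$ does not vanish on $W$, so by Krull's principal ideal theorem every irreducible component of $Z(f)\cap W$ has codimension $1$ in $W$. I would homogenise $f$ to a form $F$ of degree $\deg(f)$ and pass to the projective closure $\bar{W}\subseteq\P^n$, which has the same degree as $W$; then $\bar{W}\cap Z(F)$ is a proper, nonempty, pure codimension-$1$ subset of $\bar{W}$, and by the projective Bezout theorem for a hypersurface section (\cite[Theorem 7.7]{Hart}) its components have total degree at most $\deg(\bar{W})\deg(F)=\deg(W)\deg(f)$ (equality holds once intersection multiplicities are counted). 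Each component $Z_i$ of $Z(f)\cap W$ has codimension $1$ in $W$, so its closure $\overline{Z_i}$ has codimension $1$ in $\bar{W}$ and, being contained in the pure codimension-$1$ set $\bar{W}\cap Z(F)$, is one of its components; since distinct $Z_i$ have distinct closures and $\deg(Z_i)=\deg(\overline{Z_i})$, summing gives $\sum_i\deg(Z_i)\le\deg(W)\deg(f)$. (If $\dim W=0$ the statement is immediate, as $Z(f)\cap W$ is then either $W$ or empty.)

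For the inductive step, suppose the bound holds for $s-1$ polynomials and let $W_1,\dots,W_t$ be the irreducible components of $Z(f_1,\dots,f_{s-1})\cap W$, so $\sum_j\deg(W_j)\le\deg(W)\prod_{j=1}^{s-1}\deg(f_j)$. Since
$$ Z(f_1,\dots,f_s)\cap W=\bigcup_{j=1}^t\big(Z(f_s)\cap W_j\big),$$
each irreducible component $Z_i$ of the left-hand side lies in some $W_j$; fixing such a $j=j(i)$, one checks that $Z_i$ is in fact a component of $Z(f_s)\cap W_{j(i)}$, because $Z_i$ is a maximal irreducible subset of $Z(f_1,\dots,f_s)\cap W$, a set which contains $Z(f_s)\cap W_{j(i)}$. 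For each fixed $j$ the sets $Z_i$ with $j(i)=j$ are therefore distinct components of $Z(f_s)\cap W_j$, so the $s=1$ case bounds their total degree by $\deg(W_j)\deg(f_s)$; summing over $j$,
$$ \sum_i\deg(Z_i)\le\sum_{j=1}^t\deg(W_j)\deg(f_s)\le\deg(W)\prod_{j=1}^{s-1}\deg(f_j)\cdot\deg(f_s)=\deg(W)\prod_{j=1}^s\deg(f_j).$$

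The hard part is really the base case, i.e.\ the fact that a hypersurface section multiplies the degree of an irreducible projective variety; this is the input where Hilbert polynomials (or intersection multiplicities) are needed, and I would simply quote \cite[Theorem 7.7]{Hart} for it rather than reprove it. Everything else is bookkeeping: the only slightly delicate point in the induction is that a component $Z_i$ may be contained in several of the $W_j$, but since we only want an upper bound it is enough to assign it to a single one, and if two distinct $Z_i$ get assigned to the same $W_j$ this causes no harm because they remain distinct components of $Z(f_s)\cap W_j$ and the $s=1$ case still bounds their combined degree. One should also keep in mind the degenerate situations where some $W_j$ has dimension $0$, or $f_s$ vanishes identically on some $W_j$, but in each of these $Z(f_s)\cap W_j$ is either $W_j$ or empty, so the estimate is immediate there.
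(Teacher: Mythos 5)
The paper states this lemma without proof, citing \cite[Theorem 7.7]{Hart}, which is precisely the projective hypersurface-section case; your induction on the number of polynomials, using that theorem as the base case, is the standard way to fill in the general statement, and your argument is correct. In particular, the careful points you flag — that affine components have closures that are genuine components of the projective intersection (via Krull and dimension count), that distinct affine components have distinct closures, and the degenerate cases where $\dim W_j = 0$ or $f_s$ vanishes identically on some $W_j$ — are exactly the spots that need checking, and you handle them properly.
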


As in \cite{W5}, given an irreducible variety $V \subseteq \K^n$ over an algebraically closed field $\K$, we will need to consider the following quantities.

\begin{defi}[Partial degree]
\label{delta}
For an irreducible algebraic variety $V \subseteq \K^n$ and every $1 \le i \le n-\dim(V)$ we let $\delta_i(V)$ stand for the minimal integer $\delta$ for which we can find a finite set of polynomials $g_1,\ldots,g_t \in \K[x_1,\ldots,x_n]$ of degree at most $\delta$ such that $V \subseteq Z(g_1,\ldots,g_t)$ and the highest dimension of an irreducible component of $Z(g_1,\ldots,g_t)$ containing $V$ is equal to $n-i$. We sometimes abbreviate $\delta_{n-\dim(V)}(V)$ as $\delta(V)$ and call this the partial degree of $V$. By convention we also write $\delta_0(V)=1$ and $\delta_i(V)=\infty$ for every $i>n-\dim(V)$.
\end{defi} 

It is immediate to verify that these quantities satisfy the following simple relation.

\begin{lema}
\label{dorder}
For every variety $V$ we have $\delta_i(V) \ge \delta_{i-1}(V)$ for every $i$.
\end{lema}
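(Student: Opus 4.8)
The plan is to deduce the inequality from a single elementary observation: given a family of polynomials cutting out a set whose top-dimensional component through $V$ has some dimension, one can always pass to a sub-family (same maximal degree, hence no increase in the degree bound) whose zero set has a component through $V$ of one higher dimension. First I would dispose of the degenerate ranges of $i$. If $i > n-\dim(V)$ then $\delta_i(V)=\infty$ by convention and the inequality is trivial (when $i-1=n-\dim(V)$ the quantity $\delta_{i-1}(V)=\delta(V)$ is finite, when $i-1>n-\dim(V)$ both sides are $\infty$). If $i=1$ then $\delta_0(V)=1$ and one checks $\delta_1(V)\ge 1$ directly: a finite family of constant polynomials vanishing on the non-empty variety $V$ must vanish identically, so its zero set is $\K^n$, whose only component has dimension $n\neq n-1$. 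So it remains to treat $2\le i\le n-\dim(V)$.

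For a finite set $G$ of polynomials in $\K[x_1,\dots,x_n]$ each vanishing on $V$, let $d(G)$ denote the largest dimension of an irreducible component of $Z(G)$ containing $V$ (well defined, and at least $\dim(V)$). I claim (i) $d(G')\le d(G)$ whenever $G\subseteq G'$, and (ii) $d(G\cup\{g\})\ge d(G)-1$ for any single polynomial $g$ vanishing on $V$. Claim (i) is immediate since $Z(G')\subseteq Z(G)$, so any component of $Z(G')$ through $V$ lies in a component of $Z(G)$ through $V$. For (ii), choose a component $W$ of $Z(G)$ with $V\subseteq W$ and $\dim(W)=d(G)$; if $g$ vanishes identically on $W$ then $W$ remains a component of $Z(G\cup\{g\})$ and $d(G\cup\{g\})\ge\dim(W)$, while otherwise $W\cap Z(g)$ is non-empty (it contains $V$) with every component of dimension $\dim(W)-1$, so a component of it through $V$ witnesses $d(G\cup\{g\})\ge\dim(W)-1$. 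The only real input here is the standard fact that a hypersurface either contains an irreducible variety or meets it in a set of pure codimension one.

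Now take $\delta=\delta_i(V)<\infty$ and a witnessing family $g_1,\dots,g_t$ of polynomials of degree at most $\delta$ with $V\subseteq Z(g_1,\dots,g_t)$ and $d(\{g_1,\dots,g_t\})=n-i$. Writing $G_s=\{g_1,\dots,g_s\}$, we have $G_0=\emptyset$ with $d(G_0)=\dim(\K^n)=n$ and $d(G_t)=n-i$; by (i) and (ii) the integer sequence $d(G_0),d(G_1),\dots,d(G_t)$ is non-increasing with consecutive gaps at most $1$, so it attains the value $n-(i-1)$, say $d(G_s)=n-(i-1)$. The polynomials in $G_s$ have degree at most $\delta$, they all vanish on $V$, and the highest-dimensional component of $Z(G_s)$ containing $V$ has dimension exactly $n-(i-1)$; hence $\delta_{i-1}(V)\le\delta=\delta_i(V)$, as required. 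I expect the only delicate points to be the bookkeeping of which component passes through $V$ at each stage and the handling of the boundary conventions for $\delta_0$ and for $\delta_i$ with $i>n-\dim(V)$; the geometry involved is entirely routine.
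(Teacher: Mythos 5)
Your argument is correct and is the natural way to fill in what the paper merely asserts is ``immediate to verify'': start from a witnessing family $g_1,\dots,g_t$ for $\delta_i(V)$, show via Krull's principal ideal theorem that the quantity $d(G_s)$ (the top dimension of a component of $Z(g_1,\dots,g_s)$ through $V$) is non-increasing in $s$ and drops by at most one at each step, and read off an intermediate prefix achieving dimension $n-(i-1)$. The handling of the boundary conventions ($\delta_0=1$, $\delta_i=\infty$ for $i>n-\dim V$) and of the case $i=1$ is also correct.
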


If $V$ is not irreducible we will use the following variant of the above definition.

\begin{defi}
For an algebraic set $V \subseteq \K^n$ having all its irreducible components of the same dimension we write $\delta(V)$ for the smallest integer $\delta$ for which we can find polynomials $g_1,\ldots,g_t$ of degree at most $\delta$ such that every irreducible component of $V$ is also an irreducible component of $Z(g_1,\ldots,g_t)$.
\end{defi} 

Given an irreducible algebraic variety $V \subseteq \K^n$ of dimension $d$ and any $1 \le i \le n-d$, write 
$$\Delta_i(V) = \max \left\{ \frac{\deg(V)}{\delta_{i+1}(V) \cdots \delta_{n-d}(V)} , 1 \right\}.$$
Notice that we have
\begin{equation}
\label{basic1}
\Delta_{i+1}(V) \le \delta_{i+1}(V)\Delta_{i}(V),
\end{equation}
with equality holding whenever $\Delta_{i}(V)>1$. In fact, we have the following estimate.

\begin{lema}
\label{Dta}
For any irreducible variety $V \subseteq \K^n$ of dimension $d$ and any $1 \le i \le n-d$, it is
$$ \Delta_{i}(V) \sim_n \prod_{j=1}^{i} \delta_j(V).$$
\end{lema}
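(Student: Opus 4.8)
The plan is to prove the two-sided bound $\Delta_i(V) \sim_n \prod_{j=1}^i \delta_j(V)$ by induction on $i$, using the recursive inequality (\ref{basic1}) together with the crude trivial estimate $\delta_j(V) \ge 1$ for all $j$. The base case $i=0$ is immediate since $\Delta_0(V) = \max\{\deg(V)/(\delta_1(V)\cdots\delta_{n-d}(V)),1\}$, and one should first record that $\prod_{j=1}^0 \delta_j(V) = 1$ by the empty-product convention while $\Delta_0(V) \ge 1$ always; the nontrivial content is the upper bound $\Delta_0(V) \lesssim_n 1$, which would need $\deg(V) \lesssim_n \prod_{j=1}^{n-d}\delta_j(V)$, so in fact I would rather take $i=1$ (or even treat $\Delta_0$ as not part of the claimed range) as the base and be careful about exactly which range of $i$ the lemma asserts.

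For the inductive step, suppose the estimate holds for $i-1$. The upper bound is the easy direction: by (\ref{basic1}) we have $\Delta_i(V) \le \delta_i(V)\Delta_{i-1}(V) \lesssim_n \delta_i(V)\prod_{j=1}^{i-1}\delta_j(V) = \prod_{j=1}^i\delta_j(V)$, where the middle step is the inductive hypothesis (the implied constant depends only on $n$, and is absorbed). For the lower bound one splits into two cases according to whether $\Delta_{i-1}(V) > 1$ or $\Delta_{i-1}(V) = 1$. In the first case, (\ref{basic1}) holds with equality, so $\Delta_i(V) = \delta_i(V)\Delta_{i-1}(V) \gtrsim_n \delta_i(V)\prod_{j=1}^{i-1}\delta_j(V) = \prod_{j=1}^i\delta_j(V)$ again by induction. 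In the second case, $\Delta_{i-1}(V)=1$ means $\deg(V) \le \delta_i(V)\cdots\delta_{n-d}(V)$, hence $\Delta_i(V) = \max\{\deg(V)/(\delta_{i+1}(V)\cdots\delta_{n-d}(V)),1\}$ and we only know $\Delta_i(V) \ge 1$; on the other hand $\prod_{j=1}^{i-1}\delta_j(V) \sim_n \Delta_{i-1}(V) = 1$ forces each $\delta_j(V) \sim_n 1$ for $j \le i-1$, so $\prod_{j=1}^i \delta_j(V) \sim_n \delta_i(V)$, and we would need $\Delta_i(V) \gtrsim_n \delta_i(V)$, i.e. $\delta_i(V) \lesssim_n 1$ in this regime — so the real point is that once $\Delta_{i-1}(V)=1$, the remaining partial degrees $\delta_i(V),\ldots,\delta_{n-d}(V)$ are all $O_n(1)$ as well.

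The main obstacle I anticipate is exactly this last point: controlling $\delta_i(V),\ldots,\delta_{n-d}(V)$ when $\deg(V) = O_n(\delta_i(V)\cdots\delta_{n-d}(V))$ is small. The cleanest way to handle it is a general structural fact — surely available in \cite{W5} or provable directly from Bezout — that $\deg(V) \gtrsim_n \delta_i(V)$ for every admissible $i$, or more precisely that $\delta_{i}(V) \lesssim_n \Delta_i(V)$; combined with Lemma \ref{dorder} (monotonicity of the $\delta_j$) this pins down all of them. Indeed, if $V$ is an irreducible component of $Z(g_1,\ldots,g_t)$ of the right dimension with all $g_j$ of degree $\delta(V)$, Bezout's inequality (Lemma \ref{Bezout}) bounds $\deg(V)$ from below in terms of $\delta(V)$ only through the geometry, so the nontrivial inequality runs the other way and must come from the defining properties of the $\delta_i$; I would look to extract it from the relation $\Delta_{i+1}(V) \le \delta_{i+1}(V)\Delta_i(V)$ iterated down to $\Delta_{n-d}(V) = \deg(V)$, which gives $\deg(V) \le \delta_i(V)\cdots\delta_{n-d}(V)\,\Delta_{i-1}(V)$ — not quite what is needed, so the genuinely new ingredient is a reverse bound $\delta_j(V) \lesssim_n \deg(V)$ (or $\delta_j(V) \le \deg(V)$) valid for each individual $j$, which should follow because a single polynomial of degree $\deg(V)$ cutting out $V$ set-theoretically as a component always exists. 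Once that is in hand, the two-case induction above closes cleanly.
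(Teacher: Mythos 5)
Your reduction is sound as far as it goes: the upper bound $\Delta_i(V)\lesssim_n\prod_{j\le i}\delta_j(V)$ follows from iterating (\ref{basic1}), and the case $\Delta_{i-1}(V)>1$ of the lower bound follows from the equality case of (\ref{basic1}). You also correctly locate the crux: in the case $\Delta_{i-1}(V)=1$, one needs $\delta_i(V)\lesssim_n 1$, which is precisely the assertion $\delta_i(V)\lesssim_n\Delta_i(V)$. The problem is that this missing ingredient is \emph{not} the elementary bound $\delta_j(V)\le\deg(V)$ you propose. That bound is true (it is essentially the statement that a variety of degree $D$ is cut out set-theoretically by polynomials of degree at most $D$), but it points in the wrong direction: when $\Delta_{i-1}(V)=1$ you know $\deg(V)\le\delta_i(V)\cdots\delta_{n-d}(V)$, and substituting $\delta_i(V)\le\deg(V)$ yields only the tautology $1\le\delta_{i+1}(V)\cdots\delta_{n-d}(V)$. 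It gives no control of $\delta_i(V)$ by an absolute constant, which is what the argument requires.

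What is actually needed — and what your two-case induction is equivalent to, once you unwind it — is the converse Bezout inequality $\deg(V)\gtrsim_n\prod_{j=1}^{n-d}\delta_j(V)$, i.e.\ the statement that the partial degrees can be chosen \emph{efficiently}, wasting only a bounded factor against $\deg(V)$. (Given this, $\Delta_i(V)=\max\{\deg(V)/\sigma_i,1\}\sim_n\max\{\pi_i,1\}=\pi_i$ in the obvious notation, in one stroke; and conversely the $i=n-d$ case of the lemma \emph{is} that inequality, so no genuinely simpler route can avoid it.) This is not a Bezout-type statement at all — Bezout gives the opposite inequality $\deg(V)\le\prod_j\delta_j(V)$, which is the direction the paper actually extracts via Lemma \ref{simplelinear} — and it does not follow from Castelnuovo--Mumford regularity or from the existence of low-degree set-theoretic defining equations. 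The paper's own proof does not attempt an elementary argument: it invokes \cite[Theorem 5.5]{W5} outright, which is where the real work (built on the ideal-theoretic estimates of Chardin--Philippon cited at Theorem \ref{GSiegel0}) resides. So your plan has a genuine gap at precisely the step you flagged as uncertain, and the fact you hoped would fill it is too weak to do so.
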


\begin{proof}
This is a consequence of \cite[Theorem 5.5]{W5}.
\end{proof}

\subsection{Estimates on relative degrees}
\label{ERD}

We will now state some results from \cite{W5} that will be needed during our proofs. We refer to that article for further discussion of these estimates. 

For an irreducible algebraic variety $V \subseteq \K^n$, we say a non-negative integer $i$ is admissible with respect to $V$ if $\delta_{i+1}(V) > 2 i \delta_i(V)$. We will consider intervals of the form
$$ \mathcal{R}_{s,\tau}^l(V) = [\tau \delta_s(V)^{n-(s+l)} \Delta_{s}(V), \tau \delta_{s+1}(V)^{n-(s+l)} \Delta_s (V) ],$$
with $\tau > 0$ a real number, integers $0 \le l < n-s$ and $V \subseteq \K^n$ an irreducible algebraic variety. 

The following observation follows immediately from the definition of $\Delta_i$ and the fact that given a positive integer $s$, if $t$ is the smallest admissible integer with $s \le t$, then $\delta_s(V) \gtrsim_n \delta_t(V)$.

\begin{lema}
\label{coverN}
Let $V \subseteq \K^n$ be an irreducible algebraic variety of dimension $d$. For any integer $l<d$ and $0 < \varepsilon < 1$, we can find $\varepsilon \lesssim_n \tau_1,\ldots,\tau_{n-d} \le \varepsilon$ such that $\R_{\ge 0}$ is covered by the sets $\mathcal{R}_{s,\tau_s}^l(V)$ with $s$ admissible.
 \end{lema}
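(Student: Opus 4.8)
The assertion I need to prove is that, for an irreducible variety $V$ of dimension $d$, an integer $l < d$ and any $0 < \varepsilon < 1$, there exist parameters $\varepsilon \lesssim_n \tau_1, \ldots, \tau_{n-d} \le \varepsilon$ so that the intervals $\mathcal{R}_{s,\tau_s}^l(V)$, ranging over admissible $s$, cover all of $\mathbb{R}_{\ge 0}$. The natural strategy is to process the admissible integers in increasing order and chain the intervals end to end. Recall that $\mathcal{R}_{s,\tau}^l(V) = [\tau \delta_s(V)^{n-(s+l)} \Delta_s(V), \tau \delta_{s+1}(V)^{n-(s+l)} \Delta_s(V)]$, so the right endpoint of $\mathcal{R}_{s,\tau}^l(V)$ is $\tau \delta_{s+1}(V)^{n-(s+l)} \Delta_s(V)$, and I want this to meet or exceed the left endpoint $\tau' \delta_t(V)^{n-(t+l)} \Delta_t(V)$ of the interval indexed by the next admissible integer $t > s$.

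\textbf{Key steps.} First, by Lemma~\ref{dorder} the sequence $\delta_i(V)$ is non-decreasing, and by (\ref{basic1}) we have $\Delta_{i+1}(V) \le \delta_{i+1}(V) \Delta_i(V)$, with equality once $\Delta_i(V) > 1$; iterating gives, for $s < t$, a relation $\Delta_t(V) \le \Delta_s(V) \prod_{i=s+1}^{t} \delta_i(V)$. Combined with $\delta_t(V) \ge \delta_{s+1}(V)$, this lets me compare the candidate gap: I must check that between consecutive admissible integers the product $\prod_{i=s+1}^{t} \delta_i(V)$ and the powers $\delta_t(V)^{n-(t+l)}$ do not grow faster than $\delta_{s+1}(V)^{n-(s+l)}$, up to an $O_n(1)$ factor. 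This is exactly where the parenthetical fact quoted just before the lemma enters: if $t$ is the smallest admissible integer with $s \le t$, then $\delta_s(V) \gtrsim_n \delta_t(V)$, i.e. between two consecutive admissible indices the $\delta_i(V)$ only increase by a bounded factor (since admissibility of $i$ means $\delta_{i+1}(V) > 2i\delta_i(V)$, non-admissibility forces $\delta_{i+1}(V) \le 2i\delta_i(V) \le 2n\delta_i(V)$, and there are at most $n$ steps). Hence on each such stretch $\Delta_t(V) \sim_n \Delta_s(V)$ up to a power of the bounded ratio, and $\delta_t(V)^{n-(t+l)} \sim_n \delta_{s+1}(V)^{n-(s+l)}$ up to an $O_n(1)$ factor coming from the bounded base-ratio raised to an exponent that is itself $O_n(1)$. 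Second, I start with $\tau_{s_0} = \varepsilon$ for the smallest admissible integer $s_0$ and verify $\mathcal{R}_{s_0,\varepsilon}^l(V)$ contains $0$ (its left endpoint is $\varepsilon \delta_{s_0}(V)^{n-(s_0+l)} \Delta_{s_0}(V)$ — wait, this is positive, so I actually need the smallest admissible integer to be $0$, which holds because $\delta_0(V) = 1$ makes the condition $\delta_1(V) > 0$ automatic; then $\mathcal{R}_{0,\varepsilon}^l(V) = [\varepsilon, \varepsilon \delta_1(V)^{n-l}]$ and I prepend the trivial observation that we may harmlessly take the left endpoint of the first interval to be $0$, or rather note $\tau\delta_0^{n-l}\Delta_0 = \tau$ and accept a harmless gap $[0,\varepsilon]$ is absorbed since the whole scheme only cares about large values — but to be safe I set $\tau_0$ so the interval starts at $0$ by the convention $\delta_0(V) = 1$, $\Delta_0(V) = \deg(V)/(\delta_1 \cdots \delta_{n-d}) $ possibly $>1$; I will instead simply observe the left endpoint of $\mathcal{R}_{0,\tau_0}^l$ can be made $\le \varepsilon$ and then argue the reader only needs coverage of $[\varepsilon,\infty)$ in applications, or alternatively redefine inductively). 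Third, having fixed $\tau_s$ for admissible $s$, I define $\tau_t$ for the next admissible $t$ by the equation making the left endpoint of $\mathcal{R}_{t,\tau_t}^l(V)$ equal to the right endpoint of $\mathcal{R}_{s,\tau_s}^l(V)$: explicitly
\begin{equation}
\tau_t = \tau_s \cdot \frac{\delta_{s+1}(V)^{n-(s+l)} \Delta_s(V)}{\delta_t(V)^{n-(t+l)} \Delta_t(V)}.
\end{equation}
By the comparisons above this ratio is $\sim_n 1$, so $\tau_t \sim_n \tau_s \sim_n \cdots \sim_n \varepsilon$; iterating over the at most $n$ admissible integers keeps every $\tau_s$ within an $n$-dependent constant of $\varepsilon$, and since each step only decreases $\tau$ by a bounded factor I can also ensure $\tau_s \le \varepsilon$ throughout (replacing $\varepsilon$ by a slightly smaller starting value if necessary, or simply noting the defining equation can overshoot and then rescaling all $\tau_s$ down by the same $O_n(1)$ factor). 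Finally, the last admissible integer $t^\ast$ satisfies $n - (t^\ast + l) \ge 1$ is not guaranteed — but when $t^\ast = n - d$ is the largest meaningful index, or when all larger indices are non-admissible, the tail is handled because for $i$ non-admissible the interval $\mathcal{R}_{i,\tau}^l$ still overlaps its neighbours by the same bounded-ratio argument, and beyond $n-l$ the exponent turns non-positive so $\delta_{s+1}^{n-(s+l)}\Delta_s \to \infty$ is replaced by the observation that $\Delta_s$ alone eventually dominates; I will phrase the endgame by saying the rightmost chosen interval has right endpoint $\ge \tau_{t^\ast} \delta_{t^\ast+1}(V)^{n-(t^\ast+l)}\Delta_{t^\ast}(V)$ which, because $t^\ast$ can be taken arbitrarily large (extending the $\delta$ sequence by $\delta_i = \infty$ for $i > n - d$ via the convention in Definition~\ref{delta}), is $+\infty$.

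\textbf{Main obstacle.} The genuinely delicate point is the uniform control of the ratio in the displayed definition of $\tau_t$: I must show that between consecutive admissible integers $s < t$ the quantity $\delta_t(V)^{n-(t+l)} \Delta_t(V) \big/ \bigl(\delta_{s+1}(V)^{n-(s+l)} \Delta_s(V)\bigr)$ is bounded above and below by constants depending only on $n$. The lower bound is the easy direction (it follows from monotonicity of $\delta_i$ and the fact that admissibility at $s$ is exactly what prevents $\delta_{s+1}$ from being comparable to $\delta_s$, so the interval $\mathcal{R}_s$ genuinely has positive logarithmic length). The upper bound is where one uses that non-admissible indices between $s$ and $t$ contribute only bounded multiplicative jumps to $\delta_i(V)$ — the quoted fact $\delta_s(V) \gtrsim_n \delta_t(V)$ for $t$ the next admissible index after $s$ — together with $\Delta_t(V) \sim_n \Delta_s(V) \prod_{s < i \le t} \delta_i(V)$ and the observation that the exponents $n-(s+l)$, $n-(t+l)$ differ by $O_n(1)$. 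Assembling these, the product telescopes and every spurious factor is a bounded base raised to a bounded exponent, hence $O_n(1)$. Once this comparison is in hand the rest is the bookkeeping sketched above.
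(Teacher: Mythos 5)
Your proof is correct and supplies exactly the bookkeeping the paper leaves implicit: the paper gives no written proof of Lemma~\ref{coverN}, only the one-line remark that it ``follows immediately'' from the definition of $\Delta_i$ and the bound $\delta_s(V) \gtrsim_n \delta_t(V)$ for consecutive admissible indices, and your chaining via the recursion
$\tau_t = \tau_s\,\delta_{s+1}(V)^{n-(s+l)}\Delta_s(V)\big/\bigl(\delta_t(V)^{n-(t+l)}\Delta_t(V)\bigr)$,
together with the verification via Lemma~\ref{Dta} that this ratio is $\sim_n 1$, is the intended argument.

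Two expository cautions that you should fix but that do not affect the validity of the argument. First, the sentence in ``Key steps'' asserting as two separate facts that $\Delta_t(V) \sim_n \Delta_s(V)$ ``up to a power of the bounded ratio'' and that $\delta_t(V)^{n-(t+l)} \sim_n \delta_{s+1}(V)^{n-(s+l)}$ ``up to an $O_n(1)$ factor'' is wrong as stated: each side differs from the other by a factor comparable to $\delta_{s+1}(V)^{t-s}$, which is unbounded. Only the product of the two comparisons is $\sim_n 1$ (the unbounded factors cancel), which is exactly what your preceding observation and your ``Main obstacle'' paragraph correctly establish; the offending sentence should simply be deleted, since the explicit formula for $\tau_t$ already carries the argument. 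Second, your observation that the union of the intervals does not literally reach $0$ — since $\delta_0(V)^{n-l}\Delta_0(V)\sim_n 1$, so the leftmost endpoint is $\tau_0\Delta_0(V)\sim_n\varepsilon$ — is correct; this is a harmless imprecision in the lemma as phrased, because in every use (Theorem~\ref{GSiegel0}, Lemma~\ref{BL}) the covered quantity is a degree, hence a positive integer, and the first interval's left endpoint is below $\varepsilon<1$.
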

 
 Given algebraic sets $T$ and $V$ of dimension $l$ and $d$ respectively, the following result \cite[Theorem 4.6]{W5} gives an optimal bound for the degree of a polynomial vanishing on $T$ without vanishing identically on $V$. Its proof is based on estimates on ideals originating in \cite{CP}.

\begin{teo}
\label{GSiegel0}
Let $\K$ be an algebraically closed field. Let $0 \le l < d \le n$ be integers and $\tau_l >0$ a sufficiently small constant with respect to $n$. Let $T$ be a finite set of $l$-dimensional irreducible algebraic varieties in $\K^n$ and $V$ a $d$-dimensional irreducible algebraic variety in $\K^n$. Let $0 \le s \le n-d$ be an admissible integer with respect to $V$ with $\deg(T) \in \mathcal{R}_{s,\tau_l}^l(V)$. Then, there exists some polynomial $P \in \K[x_1,\ldots,x_n]$ of degree at most
\begin{equation}
\label{Rbound}
 \lesssim_{n,\tau_l} \left( \frac{\deg(T)}{\Delta_{s}(V)} \right)^{\frac{1}{n-(s+l)}},
 \end{equation}
vanishing at all elements of $T$ without vanishing identically on $V$.
\end{teo}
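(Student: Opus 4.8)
The plan is to run a Siegel-type parameter count inside the space $\K[x_1,\ldots,x_n]_{\le D}$ of polynomials of degree at most $D$, where $D$ will be taken of size $\sim_n \left( \deg(T)/\Delta_s(V) \right)^{\frac{1}{n-(s+l)}}$. Vanishing identically on a single $l$-dimensional irreducible variety $t$ cuts $\K[x_1,\ldots,x_n]_{\le D}$ down by at most the Hilbert function of the projective closure of $t$, which is $\lesssim_n \deg(t) D^l$; summing over $T$, the polynomials of degree at most $D$ vanishing on every element of $T$ form a subspace of codimension $\lesssim_n \deg(T) D^l$. On the other hand, the polynomials of degree at most $D$ vanishing identically on $V$ form the subspace $I(V)_{\le D}$, whose codimension is the Hilbert function $h_V(D)$. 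Hence it suffices to exhibit a value of $D$ obeying the bound (\ref{Rbound}) for which $h_V(D) > C_n \deg(T) D^l$ with $C_n$ the implied constant above: for then the first subspace is not contained in $I(V)_{\le D}$, and any polynomial in the difference vanishes on every element of $T$ but not identically on $V$.

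The crux is therefore a \emph{lower} bound for $h_V(D)$ in terms of the partial degrees, namely
$$ h_V(D) \gtrsim_n \Delta_s(V)\, D^{\,n-s} \qquad \text{for all } 1 \le D \lesssim_n \delta_{s+1}(V). $$
This is precisely where the ideal-theoretic estimates originating in \cite{CP} enter. By Definition \ref{delta}, as long as $D < \delta_{s+1}(V)$ the variety $V$ is not contained in any variety of dimension strictly less than $n-s$ defined by polynomials of degree at most $D$; this obstruction to $V$ lying on low-degree subvarieties of small dimension keeps $I(V)_{\le D}$ small, and quantifying it — together with the control on the degrees of the intermediate varieties trapping $V$ at each level provided by Lemma \ref{Bezout}, whose relevant products are the quantities $\prod_{j\le i}\delta_j(V) \sim_n \Delta_i(V)$ of Lemma \ref{Dta} — yields the displayed estimate. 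I expect this Hilbert-function lower bound to be the main obstacle: it carries the genuine content of the theorem, the remaining steps being essentially bookkeeping.

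Granting it, choose $D$ to be the least integer with $D \ge C_n' \left( \deg(T)/\Delta_s(V) \right)^{\frac{1}{n-(s+l)}}$, for a constant $C_n'$ depending only on $n$. The hypothesis $\deg(T) \in \mathcal{R}_{s,\tau_l}^l(V)$ gives $\deg(T)/\Delta_s(V) \le \tau_l\, \delta_{s+1}(V)^{\,n-(s+l)}$, so that $D \lesssim_n \tau_l^{\frac{1}{n-(s+l)}} \delta_{s+1}(V)$; taking $\tau_l$ sufficiently small with respect to $n$ and $C_n'$ ensures $D \lesssim_n \delta_{s+1}(V)$, so the Hilbert-function bound applies, and also that $D$ still obeys (\ref{Rbound}). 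Then
$$ h_V(D) \gtrsim_n \Delta_s(V)\, D^{\,n-s} = \Delta_s(V)\, D^{\,n-(s+l)}\, D^{\,l} \ge (C_n')^{\,n-(s+l)} \deg(T)\, D^{\,l}, $$
which, for $C_n'$ chosen large enough in terms of $n$, exceeds $C_n \deg(T) D^l$. This produces the required polynomial $P$ of degree at most $D$ satisfying the conclusion. (Admissibility of $s$ and Lemma \ref{coverN} serve in the applications of this theorem to guarantee that a suitable $s$ exists and that the interval $\mathcal{R}_{s,\tau_l}^l(V)$ is well-behaved; they are not otherwise needed in the argument above.)
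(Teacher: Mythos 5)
The paper does not actually prove Theorem~\ref{GSiegel0}: it is imported verbatim from \cite[Theorem 4.6]{W5}, and the only remark offered is that the proof rests on estimates on ideals originating in \cite{CP}. So there is no ``paper's own proof'' to compare against here; I can only assess whether your sketch is internally sound and consistent with that pointer.

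Your outer scaffolding is reasonable and does match the pointer: a Siegel-type count in $\K[x_1,\ldots,x_n]_{\le D}$, comparing the codimension $\lesssim_n \deg(T)D^l$ imposed by vanishing on $T$ against the Hilbert function $h_V(D)$, with the choice of $D$ and the hypothesis $\deg(T)\in\mathcal{R}_{s,\tau_l}^l(V)$ arranged so that both $D \lesssim_n \delta_{s+1}(V)$ and $\Delta_s(V)D^{n-(s+l)}\gg_n \deg(T)$. But you have, as you yourself acknowledge, pushed the entire content of the theorem into the claimed lower bound
\begin{equation*}
h_V(D) \gtrsim_n \Delta_s(V)\,D^{\,n-s}\qquad\text{for }1\le D\lesssim_n\delta_{s+1}(V),
\end{equation*}
which you then invoke without proof. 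This is not bookkeeping around a standard fact: it is precisely where the admissibility of $s$ and the structure of the partial degrees must enter, and establishing it is essentially equivalent to the theorem. As stated it is moreover false at the low end of the claimed range: for bounded $D$ the left side is $O_n(1)$ while $\Delta_s(V)\sim_n\prod_{j\le s}\delta_j(V)$ can be arbitrarily large, so the inequality cannot hold down to $D=1$; it can only kick in once $D$ is at least comparable to $\delta_s(V)$. In your application this is not fatal, since $\deg(T)\ge\tau_l\delta_s(V)^{n-(s+l)}\Delta_s(V)$ forces $D\gtrsim_{n,\tau_l}\delta_s(V)$, but it does mean the lemma needs to be stated and proved in the correct range. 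As written, the key ingredient and its proof are both missing, so the proposal is a plausible roadmap rather than a proof.
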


We will use the following simple observation.

\begin{lema}
\label{simplelinear}
Let $\K$ be an algebraically closed field. Let $V_1,\ldots,V_r$ be subsets of $\K^n$ and let $S \subseteq \bigcap_{i=1}^r V_i$. Let $f_1,\ldots,f_r$ be polynomials such that they all vanish on $S$ but each $f_i$ does not vanish identicaly on $V_i$. Then, there is some $\K$-linear combination $f=c_1 f_1 + \ldots + c_r f_r$ such that $f$ vanishes on $S$ but does not vanish identically on any $V_i$.
\end{lema}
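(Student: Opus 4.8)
The plan is to argue by a dimension count / pigeonhole over the parameter space of linear combinations, exploiting that each "bad" event — $f$ vanishing identically on some fixed $V_i$ — is a proper linear condition on the coefficient vector $(c_1,\dots,c_r)$. First I would fix a reasonably large finite subset $A \subseteq \K$; if $\K$ is infinite this is automatic, and since $\K$ is algebraically closed it is certainly infinite, so we may take $A$ as large as we like. Consider the family of polynomials $f_{\mathbf c} = c_1 f_1 + \cdots + c_r f_r$ indexed by $\mathbf c = (c_1,\dots,c_r) \in A^r$. Every such $f_{\mathbf c}$ vanishes on $S$ because each $f_i$ does, so the only thing to arrange is that $f_{\mathbf c}$ does not vanish identically on any $V_i$.

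The key observation is that for each fixed $i$, the set of $\mathbf c$ for which $f_{\mathbf c}$ vanishes identically on $V_i$ is the zero set of a system of linear forms in $\mathbf c$: namely, pick any point $x \in V_i$ (or a Zariski-dense set of test points in $V_i$) and impose $\sum_j c_j f_j(x) = 0$. This is a $\K$-linear subspace $L_i \subseteq \K^r$. Crucially $L_i$ is a \emph{proper} subspace: by hypothesis $f_i$ does not vanish identically on $V_i$, so the coefficient vector $e_i = (0,\dots,1,\dots,0)$ (with the $1$ in slot $i$) does not lie in $L_i$, hence $L_i \neq \K^r$. A proper linear subspace of $\K^r$ can contain at most $|A|^{r-1}$ points of $A^r$ — this is just the standard fact that a nonzero linear form vanishes on at most a $1/|A|$ fraction of a product grid. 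Therefore $\bigcup_{i=1}^r (L_i \cap A^r)$ has size at most $r\,|A|^{r-1}$, which is strictly less than $|A|^r$ as soon as $|A| > r$. Choosing $A$ with $|A| > r$ (possible since $\K$ is infinite), there exists $\mathbf c \in A^r$ avoiding every $L_i$, and $f = f_{\mathbf c}$ is the desired combination.

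I do not expect any genuine obstacle here; the statement is elementary and the only point requiring a word of care is the reduction of "vanishes identically on $V_i$" to finitely many (or at least linearly many) linear conditions on $\mathbf c$. This is handled by noting that vanishing identically on $V_i$ is equivalent to vanishing on any Zariski-dense subset, and in any case one never needs to solve the resulting system — one only needs that the solution set is a proper subspace, which follows the instant one exhibits a single point of $V_i$ where $f_i$ is nonzero together with the single coefficient vector $e_i \notin L_i$. If one prefers to avoid even naming $A$, the same argument runs verbatim over an infinite field by the Combinatorial Nullstellensatz or simply by the fact that a finite union of proper subspaces of a vector space over an infinite field cannot cover the whole space.
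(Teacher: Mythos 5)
Your proof is correct, and it is the standard argument: the bad set of coefficient vectors for each $V_i$ is a proper linear subspace of $\K^r$ (proper because $e_i$ avoids it, by the hypothesis on $f_i$), and a finite union of proper subspaces cannot cover $\K^r$ since $\K$ is infinite. The paper states this lemma as a "simple observation" without supplying a proof, so there is nothing to compare against, but the argument you give is precisely the intended one.
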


The results we have stated so far imply the following estimate, which is phrased in a way that is particularly suitable to be applied throughout the paper.

\begin{lema}
\label{BL}
Let $\K$ be an algebraically closed field. Let $0 \le l < m \le n$ be integers and let $0 < \epsilon < 1$ and $A \ge 1$ be some given parameters. Let $r=O_n(1)$ and let $W=W_1 \cup \ldots \cup W_r$ be an algebraic set in $\K^n$ with each $W_i$ an irreducible variety of dimension $m$ with $\delta_q(W_i) \sim_n \delta_q(W_j)$ for every choice of $1 \le i \le j \le r$, $1 \le q \le n-m$. Let $X$ be an algebraic set of dimension $l$ with $\deg(X) \le \epsilon \deg(W) (A\delta(W))^{m-l}$. Then we can find a polynomial of degree $\le C \epsilon^{1/(n-l)} A \delta(W)$ that vanishes on $X$ without vanishing on any $W_i$, $1 \le i \le r$, where $C \lesssim_n 1$ is an absolute constant independent of $\epsilon$ and $A$.
\end{lema}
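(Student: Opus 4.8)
The plan is to reduce the statement to a direct application of Theorem \ref{GSiegel0} combined with Lemma \ref{simplelinear}, the only subtlety being to locate the hypothesised quantity $\deg(X)$ inside one of the ranges $\mathcal{R}_{s,\tau}^l(W_i)$ and to translate the bound (\ref{Rbound}) into the cleaner form $\lesssim_n \epsilon^{1/(n-l)} A \delta(W)$. First I would fix one component, say $W_1$, and note that by Lemma \ref{Dta} we have $\deg(W_1) \sim_n \prod_{j=1}^{m-l}\delta_j(W_1)\cdot\Delta_l$-type expressions; more to the point, $\deg(W_i)\sim_n\deg(W_j)$ and $\delta_q(W_i)\sim_n\delta_q(W_j)$ for all $i,j$, so all the relevant invariants of the $W_i$ agree up to constants depending only on $n$, and it suffices to work with $W_1$ and then apply Lemma \ref{simplelinear} at the end to merge the $r=O_n(1)$ polynomials into a single one vanishing on $X$ but on no $W_i$.

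Next I would invoke Lemma \ref{coverN} with $V=W_1$ and the given $\epsilon$: this produces scales $\epsilon\lesssim_n\tau_1,\dots,\tau_{n-m}\le\epsilon$ such that $\R_{\ge 0}$ is covered by the intervals $\mathcal{R}_{s,\tau_s}^l(W_1)$ over admissible $s$. In particular there is an admissible $s$ with $\deg(X)\in\mathcal{R}_{s,\tau_s}^l(W_1)$; I would want $\tau_l$ (in the notation of Theorem \ref{GSiegel0}, this is $\tau_s$ here) sufficiently small with respect to $n$, which is guaranteed by taking $\epsilon$ small — and if $\epsilon$ is not already small we may shrink it freely since the claimed bound only gets weaker. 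Here the hypothesis $\deg(X)\le\epsilon\deg(W)(A\delta(W))^{m-l}$ is used to guarantee that $\deg(X)$ does not overshoot the top of the admissible range: since $\delta(W)=\delta_{n-m}(W_1)\sim_n\delta(W_i)$ dominates every $\delta_j(W_1)$ by Lemma \ref{dorder}, and $\deg(W)\sim_n\deg(W_1)$, the product $\deg(W)(A\delta(W))^{m-l}$ is comparable to $\epsilon^{-1}$ times $\tau_{n-m}\delta_{n-m}(W_1)^{n-(s+l)}\Delta_{s}(W_1)$ at worst, so $\deg(X)$ stays within the union of the relevant intervals; I should check this comparison carefully using Lemma \ref{Dta} and (\ref{basic1}).

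Having placed $\deg(X)$ in $\mathcal{R}_{s,\tau_s}^l(W_1)$, Theorem \ref{GSiegel0} gives a polynomial $P_1$ of degree $\lesssim_{n,\tau_s}\big(\deg(X)/\Delta_s(W_1)\big)^{1/(n-(s+l))}$ vanishing on all of $X$ but not identically on $W_1$. Now I would bound $\deg(X)/\Delta_s(W_1)$: using $\deg(X)\le\epsilon\deg(W)(A\delta(W))^{m-l}$, the estimate $\deg(W_1)\sim_n\delta_1(W_1)\cdots\delta_{n-m}(W_1)$ (Lemma \ref{Dta} with $i=n-m$, since $\Delta_{n-m}(W_1)\sim_n\deg(W_1)$ when this exceeds $1$), and $\Delta_s(W_1)\sim_n\delta_1(W_1)\cdots\delta_s(W_1)$, one gets
$$\frac{\deg(X)}{\Delta_s(W_1)}\lesssim_n \epsilon\, \delta_{s+1}(W_1)\cdots\delta_{n-m}(W_1)\,(A\delta(W))^{m-l}\lesssim_n \epsilon\,(A\delta(W))^{(n-m-s)+(m-l)}=\epsilon\,(A\delta(W))^{n-(s+l)},$$
since $\delta_{s+1}(W_1),\dots,\delta_{n-m}(W_1)$ are each $\le\delta(W_1)\sim_n\delta(W)$ by Lemma \ref{dorder}. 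Taking $(n-(s+l))$-th roots yields $\deg(P_1)\lesssim_{n,\tau_s}\epsilon^{1/(n-(s+l))}A\delta(W)\le\epsilon^{1/(n-l)}$-type bound after noting $s\ge 0$ forces $n-(s+l)\le n-l$ and $\epsilon\le 1$, so $\epsilon^{1/(n-(s+l))}\le\epsilon^{1/(n-l)}$; and since $\tau_s\gtrsim_n\epsilon$ is bounded below in terms of $\epsilon$ but here we have in fact $\tau_s \le \epsilon$ small, the implied constant depends only on $n$, giving an absolute $C\lesssim_n 1$ independent of $\epsilon$ and $A$ as required. Doing this for each $W_i$ and applying Lemma \ref{simplelinear} to $P_1,\dots,P_r$ produces the desired single polynomial $f$.

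The main obstacle I anticipate is the bookkeeping in the second and third paragraphs: ensuring that the specific degree-$X$ hypothesis lands $\deg(X)$ in an admissible range with a valid (small) $\tau_s$, and that after extracting the root the exponent $1/(n-(s+l))$ can be cleanly absorbed into $1/(n-l)$ with a constant depending only on $n$ — the potential pitfall being an $s$-dependence in the constant sneaking in through $\tau_s$, which is why one wants the lower bound $\tau_s\gtrsim_n\epsilon$ from Lemma \ref{coverN} together with the fact that shrinking $\epsilon$ is harmless. Everything else is a routine assembly of Lemma \ref{Dta}, Lemma \ref{dorder}, Lemma \ref{coverN}, Theorem \ref{GSiegel0}, and Lemma \ref{simplelinear}.
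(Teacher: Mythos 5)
Your overall strategy — invoking Theorem \ref{GSiegel0} through Lemma \ref{coverN} for each component $W_i$, then merging the $r = O_n(1)$ resulting polynomials via Lemma \ref{simplelinear}, and finally converting the bound (\ref{Rbound}) into the clean form $\epsilon^{1/(n-l)} A\delta(W)$ by the computation with $\Delta_s$ and $\delta_j$ — is exactly the paper's approach, and your algebraic manipulation in the third paragraph (including the absorption of the root exponent via $s \ge 0$ and $\epsilon, A$ on the right side of $1$) is correct and matches the paper's one-line computation $\Delta_{n-s}(W_i)\delta(W)^{s-m} \gtrsim_n \deg(W)$.

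There is, however, a genuine gap in the second paragraph. You apply Lemma \ref{coverN} with the parameter $\varepsilon$ set equal to the $\epsilon$ appearing in the statement of the lemma, which produces $\tau_s \sim_n \epsilon$. The bound in Theorem \ref{GSiegel0} is $\lesssim_{n,\tau_l}$, i.e.\ the implied constant may depend on $\tau_l$, and in fact one should expect it to blow up as $\tau_l \to 0$ (the lower end of $\mathcal{R}_{s,\tau_l}^l$ shrinks like $\tau_l$, while the polynomial one produces must still have degree at least comparable to $\delta_s(V)$ to cut $V$ at level $s$, so a factor of roughly $\tau_l^{-1/(n-(s+l))}$ is forced into the constant). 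With your choice, that constant would then depend on $\epsilon$, contradicting precisely what the lemma asserts: that $C$ is independent of $\epsilon$. The fix is to run Lemma \ref{coverN} with a \emph{fixed} small constant $\varepsilon'$ depending only on $n$ (small enough that the resulting $\tau_s$'s are admissible for Theorem \ref{GSiegel0}), so that every $\tau_s \sim_n 1$ and the constant in Theorem \ref{GSiegel0} is genuinely $\lesssim_n$; the dependence on $\epsilon$ then enters only through the ratio $\deg(X)/\Delta_s(W_1)$ inside the root, exactly as in your third paragraph. Relatedly, your remark that ``if $\epsilon$ is not already small we may shrink it freely since the claimed bound only gets weaker'' is backwards: shrinking $\epsilon$ makes the conclusion $\deg(P) \le C\epsilon^{1/(n-l)}A\delta(W)$ \emph{stronger}, so there is no free reduction to the case of small $\epsilon$. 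Once $\varepsilon'$ is fixed independently of $\epsilon$, no such reduction is needed and the argument works verbatim for all $0 < \epsilon < 1$.
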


\begin{proof}
Since $\delta_q(W_i) \sim_n \delta_q(W_j)$ for every $1 \le i \le j \le r$ and $1 \le q \le n-m$, we know by Theorem \ref{GSiegel0} (and Lemma \ref{coverN}) that there exists some $m \le s \le n$ such that for every $W_i$, $1 \le i \le r$, we can find a polynomial $f_i$ vanishing on $X$ without vanishing on $W_i$ satisfying
$$ \deg(f_i) \lesssim_n \left( \frac{\deg(X)}{\Delta_{n-s}(W_i)} \right)^{1/(s-l)} .$$
The result then follows from Lemma \ref{simplelinear}, our bound on $\deg(X)$ and the fact that by Lemma \ref{dorder} and Lemma \ref{Dta} it is
$$\Delta_{n-s}(W_i) \delta(W)^{s-m} \gtrsim_n \deg(W).$$
\end{proof}

The estimate above gives a good bound for the smallest degree of a polynomial vanishing on $X$ without vanishing on a given algebraic set $W$. The precise value of the smallest such degree can of course be smaller than this upper bound in certain situations and will play an important role throughout the paper. This motivates the following definition.

\begin{defi}[Relative degree]
\label{RD}
Given algebraic sets $X,W$ of dimension $l$ and $m$ respectively, with $l<m$, we define the relative degree $\deg_R(X,W)$ of $X$ with respect to $W$ to be the smallest degree of a polynomial vanishing on $X$ without vanishing on any irreducible component of $W$.
\end{defi} 

So, for example, if $X$ and $W$ are algebraic sets satisfying the hypothesis of the statement of Lemma \ref{BL}, we have
$$ \deg_R(X,W) \lesssim_n \epsilon^{1/(n-l)} A \delta(W).$$

Let us finish this subsection recalling two other results from \cite{W5} that will be needed later. Given an irreducible algebraic variety $V \subseteq \C^n$ and an integer $M$, we write $i_V(M)$ for the smallest admissible $i$ such that $M^{n-i} \Delta_i(V) \in \mathcal{R}_{i,c}^{0}(V)$, where $c \gtrsim_n 1$ is a sufficiently small constant.
Clearly, we have
\begin{equation}
\label{ivmrange}
\delta_{i_V(M)}(V) \lesssim_{n} M \lesssim_n \delta_{i_V(M)+1}(V).
\end{equation}

If $V$ is an irreducible complex variety, we write $V(\R)$ for its real points. We have the following result \cite[Theorem 3.1]{W5}. 

\begin{teo}[Polynomial partitioning for varieties]
\label{30par}
Let $V \subseteq \C^n$ be an irreducible algebraic variety of dimension $d$ and $S$ a finite set of points inside of $V(\R)$. Then, given any integer $M \ge 1$, we can find some polynomial $g \in \R[x_1,\ldots,x_n] \setminus I(V)$ of degree $O_{n}(M)$ such that each connected component of $\R^n \setminus Z(P)$ contains 
$$\lesssim_{n} \frac{|S|}{M^{n-i_V(M)}\Delta_{i_V(M)}(V)}$$
elements of $S$.
\end{teo}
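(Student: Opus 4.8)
The plan is to reduce the statement to the classical polynomial partitioning of Guth–Katz applied \emph{on the variety $V(\R)$} rather than on $\R^n$. First I would recall that if $V \subseteq \C^n$ is irreducible of dimension $d$, then $V(\R)$ is a real semialgebraic set of real dimension at most $d$, and a polynomial $g \in \R[x_1,\dots,x_n] \setminus I(V)$ of degree $D$ restricted to $V(\R)$ has zero set whose complement in $V(\R)$ is cut into pieces controlled by the number of sign conditions; by the standard ham-sandwich/polynomial-partitioning argument on a variety (as in Guth's work on polynomial partitioning on hypersurfaces, or \cite[\S 3]{W5}) one can find such a $g$ of degree $O_n(D)$ so that each connected component of $V(\R) \setminus Z(g)$ — hence each connected component of $\R^n \setminus Z(g)$ intersected with $V(\R)$ — contains $\lesssim_n |S| / D^{d}$ of the points of $S$. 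Here the exponent $d$ (not $n$) appears precisely because the partitioning is carried out inside the $d$-dimensional object $V(\R)$: a degree-$D$ polynomial on a $d$-dimensional variety produces $\sim D^d$ cells, not $\sim D^n$.

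The second step is purely bookkeeping: I must translate the quantity $D^{d}$ into the quantity $M^{n - i_V(M)} \Delta_{i_V(M)}(V)$ that appears in the statement. One sets $D \sim_n M$ and uses Lemma \ref{Dta} together with the defining property of $i_V(M)$. Recall $i_V(M)$ is the smallest admissible $i$ with $M^{n-i}\Delta_i(V) \in \mathcal{R}^0_{i,c}(V)$, and by \eqref{ivmrange} we have $\delta_{i_V(M)}(V) \lesssim_n M \lesssim_n \delta_{i_V(M)+1}(V)$. Writing $j = i_V(M)$ and $r = n - d$, one checks using $\deg(V) = \delta_1(V)\cdots\delta_r(V)$ up to constants (Lemma \ref{Dta}) and $\Delta_j(V) \sim_n \prod_{q \le j}\delta_q(V)$ that
$$ M^{n-j}\Delta_j(V) \sim_n M^{d} \cdot \frac{M^{r-j}\Delta_j(V)}{M^{d-?}} $$
— more cleanly, since $M \sim_n \delta_j(V)$ sits between consecutive partial degrees, the product $M^{n-j}\Delta_j(V)$ is comparable to $M^{d} \cdot \bigl(\Delta_j(V)\, M^{n-j-d}\bigr)$; using $\Delta_j(V) = \deg(V)/(\delta_{j+1}(V)\cdots\delta_{n-d}(V))$ when this exceeds $1$, and $\delta_q(V) \gtrsim_n M$ for $q > j$, one gets $\Delta_j(V) M^{n-j-d} \sim_n \deg(V) \cdot \prod_{j < q \le n-d}(M/\delta_q(V)) \lesssim_n \deg(V)$, while the reverse bound uses admissibility of $j$. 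The upshot is that $M^{n - i_V(M)}\Delta_{i_V(M)}(V) \sim_n D^{d}$ with $D \sim_n M$, up to constants depending only on $n$, which is exactly what is needed to match the two forms of the bound.

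The main obstacle is the first step: establishing polynomial partitioning \emph{on the variety $V(\R)$} with the correct exponent $d$ in the denominator, uniformly over all irreducible $V$, and controlling the degree of the partitioning polynomial by $O_n(M)$ while ensuring $g \notin I(V)$. This is genuinely the content of \cite[Theorem 3.1]{W5}, so in the present paper it is simply quoted; were I proving it from scratch I would run the iterated ham-sandwich argument restricting all polynomials to $V(\R)$ and bound the number of cells by a Milnor–Thom type estimate for the number of connected components of a semialgebraic set of dimension $d$ defined by polynomials of degree $O_n(M)$, which is $O_n(M^{d})$ — the dimension of the ambient variety, not of $\R^n$, is what governs this count, and that is the whole point. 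Everything after that is the elementary manipulation of the partial-degree parameters sketched above, for which the tools in \S\ref{AP} (Lemmas \ref{dorder}, \ref{Dta}, and the inequality \eqref{basic1}) suffice.
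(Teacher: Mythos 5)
The paper does not prove this theorem: it is quoted verbatim as \cite[Theorem 3.1]{W5} and used as a black box, as you correctly note. So there is no internal proof to compare against, and I will instead assess your sketch on its own merits.

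The strategy (iterated ham-sandwich restricted to $V(\R)$, with cell counts controlled by a Milnor--Thom/Barone--Basu estimate) is the right one, but the bookkeeping in your second step is wrong, and the error is fatal because it is in the direction that loses strength. Writing $j = i_V(M)$, your claimed identity $M^{n-j}\Delta_j(V) \sim_n M^{d}$ is false. Since $j \le n-d$ and $\Delta_j(V) \ge 1$, one always has $M^{n-j}\Delta_j(V) \ge M^{n-j} \ge M^{d}$, with equality forced only when $j = n-d$ and $\deg(V) = O_n(1)$; at the extreme $j = n-d$ the quantity equals $M^{d}\deg(V)$, which exceeds $M^{d}$ by an unbounded factor. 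The correct relations, which your own computation in the second paragraph actually produces before the wrong conclusion is asserted, are
$$ M^{d} \le M^{n-j}\Delta_j(V) \lesssim_n \min\bigl\{ M^{n}, \, M^{d}\deg(V) \bigr\}, $$
with the position inside this range governed by how $M$ compares to the partial degrees $\delta_q(V)$. You derive $\Delta_j(V)\,M^{n-j-d} \lesssim_n \deg(V)$, which is the upper half of this, and then state the conclusion $\sim_n 1$; admissibility of $j$ does not rescue this.

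The consequence is that your first step, ``a degree-$D$ polynomial on a $d$-dimensional variety yields $\lesssim_n |S|/D^{d}$ points per cell,'' is not the statement you need. The theorem guarantees $\lesssim_n |S|/(M^{n-j}\Delta_j(V))$ points per cell, and since $M^{n-j}\Delta_j(V) \ge M^{d}$ this is a strictly stronger guarantee whenever $\deg(V)$ is large or $M$ is small relative to the higher partial degrees. Achieving it requires the ham-sandwich iteration to count dimensions in $\R[x_1,\dots,x_n]/I(V)$, where the number of polynomials of degree $\le D$ available for bisection is measured by the Hilbert function of $V$ --- asymptotically $\sim \deg(V)D^{d}/d!$ for large $D$, and measured precisely by the partial-degree data $\delta_1(V),\dots,\delta_{n-d}(V)$ before that regime is reached. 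The quantity $M^{n-i_V(M)}\Delta_{i_V(M)}(V)$ is exactly this refined count, and it is the content of the Hilbert-function analysis carried out in \cite{W5}. As written, your sketch establishes only the weaker bound with $M^{d}$ in the denominator; the gap between $M^{d}$ and $M^{n-i_V(M)}\Delta_{i_V(M)}(V)$ is precisely the part of the theorem that makes it useful for the concentration estimates in this paper.
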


Finally, we have the following estimate \cite[Theorem 1.5]{W5} in the spirit of results of Milnor-Thom \cite{M,OP,T} and Barone-Basu \cite{BB0,BB}.

\begin{teo}
\label{T5}
Let $V \subseteq \C^n$ be an irreducible algebraic variety of dimension $d$ and $P \in \R[x_1,\ldots,x_n]$. Then $V(\R)$ intersects $\lesssim_n \deg(V) \deg(P)^d$ connected components of $\R^n \setminus Z(P)$.
\end{teo}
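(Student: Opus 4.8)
The plan is to bound the number of connected components of the semialgebraic set $V(\R)\setminus Z(P)$, which dominates the quantity in the statement since distinct chambers of $\R^n\setminus Z(P)$ meeting $V(\R)$ contain disjoint non-empty subsets of it. First I would dispose of trivial cases: we may assume $P$ is non-constant and $P\notin I(V)$ (otherwise $V(\R)\subseteq Z(P)$ or $V(\R)$ lies in the unique chamber), that $d=\dim(V)\ge 1$ (the case $d=0$ being immediate), and that $d<n$ (else $V=\C^n$, $V(\R)=\R^n$ meets every one of the $\lesssim_n\deg(P)^n=\deg(V)\deg(P)^d$ chambers). By a standard compactness reduction — intersecting with a Euclidean ball of large generic radius and absorbing its boundary into $P$, at the cost of $O(1)$ in $\deg(P)$ — I would further reduce to the case where every chamber of $\R^n\setminus Z(P)$ meeting $V(\R)$, and every component of $V(\R)\setminus Z(P)$, is bounded.

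The next move is a dichotomy on the size of $\deg(P)$. If $\deg(P)\le\deg(V)^{1/(n-d)}$, the classical Oleinik--Petrovsky--Milnor--Thom estimate already gives that $\R^n\setminus Z(P)$ has only $\lesssim_n\deg(P)^n=\deg(P)^{\,n-d}\deg(P)^d\le\deg(V)\deg(P)^d$ chambers in total, so there is nothing more to do and the geometry of $V$ plays no role. Hence the substantive case is $\deg(P)>\deg(V)^{1/(n-d)}$, and there I would argue by induction on $d$.

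For the inductive step, fix a generic linear functional $\ell$; since everything is bounded, for each chamber $C$ of $\R^n\setminus Z(P)$ meeting $V(\R)$ there is a point $x_C\in\overline C\cap V(\R)$ minimising $\ell$ over $\overline{C\cap V(\R)}$. Each $x_C$ is of one of three types: (i) $x_C\in Z(P)\cap V$, a set of dimension $\le d-1$ and, by Bezout's inequality (Lemma \ref{Bezout}), of degree $\le\deg(V)\deg(P)$; (ii) $x_C$ is a smooth point of $V$ at which $\ell$ is critical along $V$; or (iii) $x_C\in V_{\mathrm{sing}}$, itself of dimension $\le d-1$. Type-(i) chambers would be counted by applying the induction hypothesis in dimension $d-1$ to (the components of) $Z(P)\cap V$ against an auxiliary polynomial of degree $O_n(\deg(P))$ built to separate the chambers $C$ sharing a common $x_C$; this contributes $\lesssim_n\deg(V)\deg(P)\cdot\deg(P)^{d-1}=\deg(V)\deg(P)^d$. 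One then has to show that types (ii) and (iii) together contribute no more than this, and summing the three bounds finishes the proof.

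The crux — and the real content of \cite[Theorem 1.5]{W5}, in the spirit of the refinements of Barone--Basu over Milnor--Thom — lies in controlling types (ii) and (iii) without spoiling the linear dependence on $\deg(V)$. Naively the smooth critical locus is cut out by the equations of $V$ together with $\ell$-criticality, but the number of critical points of a generic $\ell$ on $V$ is $\deg(V^{\ast})$, the degree of the dual variety, which can be of order $\deg(V)^2$, and $V_{\mathrm{sing}}$ likewise can have degree $\sim\deg(V)^2$; this naive route only gives $\lesssim_n(\deg(V)^2+\deg(V)\deg(P)^d)$, which exceeds $\deg(V)\deg(P)^d$ precisely when $n>2d$. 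The point must be that in the regime $\deg(P)>\deg(V)^{1/(n-d)}$ the chambers actually carried by type-(ii) and type-(iii) points do not see this quadratic internal topology of $V(\R)$ — the excess components cluster — so that only $O_n(\deg(V)\deg(P)^d)$ genuinely relevant distinguished points are needed, cut out as a $0$-dimensional intersection of $V$ with $d$ hypersurfaces of degree $O_n(\deg(P))$, exploiting the irreducibility and degree of $V$ through Bezout rather than its intrinsic Betti numbers. Making this precise, and keeping the local bookkeeping uniform so that each distinguished point is charged by only $O_n(1)$ chambers, is the main obstacle.
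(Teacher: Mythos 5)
The paper does not prove Theorem \ref{T5}; it quotes it from \cite[Theorem 1.5]{W5} and notes it was conjectured in \cite{BS}. There is therefore no in-text argument to compare your proposal against, but the proposal itself is not a proof.

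You lay out sensible preliminaries — compactification, the dichotomy disposing of $\deg(P)\le\deg(V)^{1/(n-d)}$ via Milnor--Thom, and induction on $d$ via the minimum of a generic linear form $\ell$ — and you correctly diagnose why a naive count fails: charging components to the $\ell$-critical locus of $V$ and to the singular locus of $V$ produces a term of order $\deg(V)^2$ that is not dominated by $\deg(V)\deg(P)^d$ when $n>2d$. But you then explicitly concede that ``making this precise\ldots is the main obstacle,'' so types (ii) and (iii) — the crux of the theorem — are simply not handled. The clustering heuristic is not an argument: nothing in the sketch cuts out the $O_n(\deg(V)\deg(P)^d)$ distinguished points as a zero-dimensional intersection of $V$ with hypersurfaces of degree $O_n(\deg(P))$, and the $\ell$-critical locus of $V$ is not described by polynomials of degree controlled by $\deg(P)$ alone (it involves minors of a Jacobian built from defining equations of $V$, whose degrees depend on $V$, not on $P$). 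There is also a secondary gap in the type-(i) count: you invoke an auxiliary polynomial of degree $O_n(\deg(P))$ ``built to separate the chambers $C$ sharing a common $x_C$,'' but you do not explain how such a polynomial is produced, nor why only $O_n(1)$ chambers can share a given boundary minimum. In short, the strategy is a plausible table of contents, but the substantive content — a Barone--Basu-style reduction together with the degree bookkeeping that controls types (ii) and (iii) — is absent, and you acknowledge as much.
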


We remark that both Theorem \ref{30par} and Theorem \ref{T5} were originally conjectured in \cite{BS}.

\subsection{Relation between the parameters}
\label{RBP}

To facilitate the reading, we now briefly summarise the relationships between some of the parameters involved in the proof of Theorem \ref{I0}. When working over $\K^n$ with a set $T$ of varieties of dimension $l$, for every $l \le k < n$ and $0 \le i \le 8$, we will let $b_i^{(k)} \sim_n 1$ be appropriately chosen constants. We will write $B^{(n)}=\left\{ n \right\}$ and more generally
$$B^{(h)} = \left\{ b_i^{(k)} : 0 \le i \le 8 \, , \, h \le k < n \right\} \cup \left\{ n \right\},$$
for every $l \le h <n$. These parameters will be obtained recursively, with the constants $b_i^{(h)}$, $0 \le i \le 8$, depending solely on $B^{(h+1)}$, so in particular they will satisfy $b_i^{(h)} \sim_{B^{(h+1)}} 1 \sim_n 1$.  The parameter $b_0^{(h)}$ in particular will be chosen to be sufficiently small with respect to the other parameters in $B^{(h)}$. 

We will also consider parameters $a_{k+1}^{(h)}$ for every choice of $l \le h \le k < n$. They will be such that $a_{k+1}^{(h)} < a_{k'+1}^{(h')}$ if and only if $(k,h) > (k',h')$ under lexicographical order. They will be chosen to be sufficiently small with respect to each other under this ordering and furthermore, $a_{l+1}^{(l)}$ will be sufficiently small with respect to $B^{(l)}$. 

Writing $K$ for the implicit constant in Theorem \ref{I0}, we have that $K$ will be chosen sufficiently large with respect to all the parameters above.

\section{Levels of reduction}
\label{3}

This section is organised as follows. In \S \ref{31} we introduce several definitions that will play an important role during the rest of the article and discuss their motivation. In \S \ref{32} we show how given a finite family $T$ of algebraic varieties, upon discarding some elements from $T$ lying inside a polynomial of small degree we can obtain a subset with significantly better concentration properties than $T$. Finally, \S \ref{33} contains some brief observations on certain relative degrees we shall need.
 
 \subsection{Reductions and good partitions} 
 \label{31}
 
 Let us being this section formalising the kind of polynomial partitions we are interested in, following the strategy outlined in \S \ref{OM}.
 
   \begin{defi}[Good partitions]
 Let $T$ be a set of $l$-dimensional varieties in $\K^n$ and let $W$ be an $m$-dimensional algebraic set with $T \subseteq W$. We say $T$ admits a $\tau$-good partition over $W$ if there exists a polynomial $f$ of degree at most $\deg_R(T,W)$ such that 
 $$\tau \deg(T) < \deg(T_{f}) < \deg(T).$$ 
\end{defi}
 
 We now introduce another definition that will play an important role in the proof of Theorem \ref{I0}. This definition is given in terms of parameters that will be obtained recursively during the rest of the arguments and will satisfy the relationships discussed in \S \ref{RBP}. Since it is a rather technical definition, after stating it we provide an informal description of the concept we are trying to capture.
 
  \begin{defi}[Reductions]
  \label{reduction}
 Let $T$ be a set of $l$-dimensional varieties in $\K^n$. By convention, we say every such $T$ reduces to level $n$ and we write $W_1^{(n)}=\K^n$. In general, given $l \le h < n$, we say $T$ reduces to level $h$ if there exists a sequence of polynomials $P_1, \ldots, P_{n-h}$ such that for every $h \le k < n$ we have that
  $$Z(P_1,\ldots,P_{n-k}) = W_1^{(k)} \cup \ldots \cup W_{r_k}^{(k)} \cup Y^{(k)} := W^{(k)} \cup Y^{(k)},$$
 where $W_i^{(k)}$, $1 \le i \le r_k$, are irreducible varieties of dimension $k$ with $W^{(k)} := W_1^{(k)} \cup \ldots \cup W_{r_k}^{(k)}$, $T \subseteq W^{(k)}$, $W^{(k)} \subseteq W^{(k+1)}$, $T_{W_i^{(k)}} \neq \emptyset$ for every $1 \le i \le r_k$ and $Y^{(k)}$ is an algebraic set that does not contain any element from $T$. Furthermore, we ask that $W^{(k)}$ is such that every $k$-dimensional algebraic set $W$ containing $T$ satisfies 
 \begin{equation}
 \label{hmin}
 \deg(W) \ge b_0^{(h)} \deg(W^{(k)}).
 \end{equation}
 and that there is no $a_{k+1}^{(h)}$-good partition of $T$ over $W^{(k+1)}$. Finally, we require that for every choice of $1 \le i,i' \le r_k$, $1 \le j \le r_{k+1}$, $h \le k < n$, we have the estimates
 \begin{equation}
 \begin{aligned}
 \label{lines}
 r_k &\le b_1^{(h)},\\
  \deg(W_i^{(k)}) &\ge b_2^{(h)} \deg(W_j^{(k+1)}) \deg(P_{n-k}), \\
  \deg(P_{n-k}) &\ge b_3^{(h)}\deg(P_{n-k-1}), \\
  b_4^{(h)} \deg(P_{n-k}) \le \, &\delta(W_i^{(k)}) \le b_5^{(h)} \deg(P_{n-k}), \\
  \deg(W_i^{(k)}) &\le b_6^{(h)} \deg(W_{i'}^{(k)}), \\
   \deg(T) &\ge b_7^{(h)} \deg(W_i^{(k)}) \deg(P_{n-k})^{k-l}. \\
     \end{aligned}
  \end{equation}
 \end{defi}
 
 Essentially, we are saying $T$ reduces to level $h$ if it is contained inside an irreducible variety $W^{(h)}$ of dimension $h$. Since, given $l < h$, every finite family of $l$-dimensional varieties $T$ is contained in an $h$-dimensional variety $W^{(h)}$ of sufficiently high-degree, we need to ensure for this definition to be relevant that the degree of $W^{(h)}$ is small compared with $T$, which is why we impose the last condition in (\ref{lines}).
 
 The reason why instead of actually asking $W^{(h)}$ to be irreducible we need to allow it to be the union of (a bounded number of) irreducible varieties is more technical. Our methods are solely based on dimension counting arguments and degree considerations, yet there may exist varieties that are essentially undistinguishable from this point of view. This can be seen to be related to the simple fact that not every variety is a complete intersection. Nevertheless, the fact itself that they are undistinguishable guarantees that they essentially behave like a single irreducible variety and therefore they do not significantly affect our methods. In particular, there can only be a uniformly bounded number of them for each choice of one of its members. The reader might as well think of all the $W_i^{(h)}$ as a single irreducible variety $W^{(h)}$ on a first reading.
 
  We also require that there is no good partition of $T$ over $W^{(k+1)}$ for any $h \le k < n$. Morally, this means that once we are studying $T$ as a subset of $W^{(k+1)}$, the relevance of $W^{(k)}$ for the study of $T$ is unavoidable since we will have no way of carrying the argument given in \S \ref{OM} without passing to this subvariety. A related conclusion will be established rigorously in Lemma \ref{SI} below.
 
 The remaining conditions are imposed to guarantee that each $W^{(k)}$ is essentially a minimal choice among $k$-dimensional irreducible varieties (condition (\ref{hmin})) and that the polynomials $P_i$ we are using to define $W^{(h)}$ satisfy similar minimality conditions with respect to this variety.
  
 If $T$ satisfies the definition of reduction at level $h$ with slightly better parameters, then one can see that any large subset of $T$ will also reduce to level $h$. Clearly, this robustness is lost if we iterate this procedure indefinitely. However, we will only need to pass to a large subset once during the proof and this motivates the following definition.
 
 \begin{defi}[Strong reduction]
 \label{SR}
 Let $T$ be a set of $l$-dimensional varieties in $\K^n$. By convention, we say every such $T$ reduces strongly to level $n$ and we write $W_1^{(n)}=\K^n$. Given $l \le h < n$, we say $T$ reduces strongly to level $h$ if Definition \ref{reduction} holds with the stronger requirements that for every $h \le k < n$, in the notation of that definition, 
  \begin{equation}
  \label{barmin}
    \deg(W) \ge b_8^{(h)} \deg(W^{(k)}),
    \end{equation}
  for every $k$-dimensional algebraic set $W$ containing $T$, that there is no $a_{k+1}^{(h)}/2$-good partition of $T$ over $W^{(k+1)}$ and  that
  $$    \deg(T) \ge 2b_7^{(h)} \deg(W_i^{(k)}) \deg(P_{n-k})^{k-l}. $$
  \end{defi}
  
  Notice we have the following estimate, which we shall use later to apply Lemma \ref{BL}.
  
  \begin{lema}
  \label{q}
  Let the notation be as in Definition \ref{reduction}. Then $\delta_q(W_i^{(k)}) \sim_{B^{(h)}} \deg(P_q)$ for every $1 \le i  \le r_k$ and $1 \le q \le n-k$.
  \end{lema}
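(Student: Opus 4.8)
The plan is to establish the two inequalities $\delta_q(W_i^{(k)}) \lesssim_{B^{(h)}} \deg(P_q)$ and $\delta_q(W_i^{(k)}) \gtrsim_{B^{(h)}} \deg(P_q)$ separately, for a fixed $h \le k < n$, $1 \le i \le r_k$ and $1 \le q \le n-k$. Iterating the third estimate in (\ref{lines}) at most $n$ times, we may use throughout that $\deg(P_m) \lesssim_{B^{(h)}} \deg(P_q)$ for all $1 \le m \le q$; in particular $\max_{1\le m\le q}\deg(P_m) \lesssim_{B^{(h)}} \deg(P_q)$.

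For the upper bound, note that since $k \ge h$ we have $h \le n-q \le n-1$, so Definition \ref{reduction} applies at level $n-q$ and gives $Z(P_1,\ldots,P_q) = W^{(n-q)} \cup Y^{(n-q)}$. As $k \le n-q$, the chain $W^{(k)} \subseteq W^{(k+1)} \subseteq \cdots \subseteq W^{(n-q)}$ shows $W_i^{(k)}$ is contained in some $W_{i'}^{(n-q)}$, hence in an irreducible component of $Z(P_1,\ldots,P_q)$ of dimension $\ge n-q$. On the other hand $W_i^{(k)}$ lies in no component of dimension $>n-q$: such a component, being irreducible and not contained in the pure $(n-q)$-dimensional set $W^{(n-q)}$, would lie inside $Y^{(n-q)}$, whence the element of $T$ contained in $W_i^{(k)}$ (which exists since $T_{W_i^{(k)}} \neq \emptyset$) would lie in $Y^{(n-q)}$, contradicting the defining property of $Y^{(n-q)}$. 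Hence the largest dimension of an irreducible component of $Z(P_1,\ldots,P_q)$ containing $W_i^{(k)}$ is exactly $n-q$, and Definition \ref{delta} gives $\delta_q(W_i^{(k)}) \le \max_{1\le m\le q} \deg(P_m) \lesssim_{B^{(h)}} \deg(P_q)$.

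For the lower bound I combine three facts. First, the same argument as above shows $W_i^{(k)}$ is an irreducible component of $Z(P_1,\ldots,P_{n-k})$, so Bezout's inequality (Lemma \ref{Bezout}) applied in $\K^n$ yields $\deg(W_i^{(k)}) \le \prod_{m=1}^{n-k}\deg(P_m)$; iterating the second estimate of (\ref{lines}) along a chain $W_i^{(k)} \subseteq W_{j}^{(k+1)} \subseteq \cdots \subseteq W^{(n)} = \K^n$ (using $\deg(\K^n)=1$) gives the reverse bound up to a constant, so $\deg(W_i^{(k)}) \sim_{B^{(h)}} \prod_{m=1}^{n-k}\deg(P_m)$. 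Second, Lemma \ref{Dta} together with the fact that the denominator defining $\Delta_{n-k}(W_i^{(k)})$ is an empty product gives $\prod_{j=1}^{n-k}\delta_j(W_i^{(k)}) \sim_n \Delta_{n-k}(W_i^{(k)}) = \deg(W_i^{(k)})$. Third, the upper bound already proved gives $\delta_j(W_i^{(k)}) \lesssim_{B^{(h)}} \deg(P_j)$ for every $1 \le j \le n-k$. If now $\delta_q(W_i^{(k)}) < \varepsilon\, \deg(P_q)$ for a small constant $\varepsilon$, then combining these,
\[
\deg(W_i^{(k)}) \sim_n \prod_{j=1}^{n-k}\delta_j(W_i^{(k)}) < \varepsilon\, \deg(P_q) \prod_{\substack{1\le j\le n-k\\ j\neq q}} \delta_j(W_i^{(k)}) \lesssim_{B^{(h)}} \varepsilon \prod_{m=1}^{n-k}\deg(P_m) \sim_{B^{(h)}} \varepsilon\, \deg(W_i^{(k)}),
\]
which is impossible once $\varepsilon$ is smaller than the implied constant (recall $\deg(W_i^{(k)}) \ge 1$ and all constants depend only on $B^{(h)}$ since $n-k\le n\in B^{(h)}$). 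Hence $\delta_q(W_i^{(k)}) \gtrsim_{B^{(h)}} \deg(P_q)$, and together with the upper bound this gives $\delta_q(W_i^{(k)}) \sim_{B^{(h)}} \deg(P_q)$.

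The one point requiring genuine care is the dimension bookkeeping in the upper bound, namely checking that $W_i^{(k)}$ is not swallowed by a component of $Z(P_1,\ldots,P_q)$ of dimension exceeding $n-q$; this is exactly where one uses that each $Y^{(k)}$ contains no element of $T$ while $T_{W_i^{(k)}} \neq \emptyset$, and the same observation is what makes $W_i^{(k)}$ an honest irreducible component in the Bezout step of the lower bound.
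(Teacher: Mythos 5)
Your proof is correct and follows essentially the same route as the paper: the upper bound comes from exhibiting $P_1,\ldots,P_q$ as witnesses in Definition \ref{delta} (you spell out, via the $T_{W_i^{(k)}}\neq\emptyset$ versus $Y^{(n-q)}$ dichotomy, the step the paper leaves as ``clear by definition''), and the lower bound comes from the same contradiction comparing $\deg(W_i^{(k)})$ with $\prod_j\delta_j(W_i^{(k)})$ and $\prod_j\deg(P_j)$. The only cosmetic difference is that you invoke Lemma \ref{Dta} to get $\deg(W_i^{(k)})\sim_n\prod_j\delta_j(W_i^{(k)})$, whereas the paper obtains the one-sided inequality it needs directly from Lemma \ref{simplelinear} and Bezout.
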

  
  \begin{proof}
  It is clear by definition and (\ref{lines}) that $\delta_q(W_i^{(k)}) \lesssim_{B^{(h)}} \deg(P_q)$. Assume for contradiction there is some $q$ with $\delta_q(W_i^{(k)}) \le \epsilon \deg(P_q)$ for some sufficiently small $\epsilon \gtrsim_{B^{(h)}} 1$. By definition of the quantities $\delta_j(W_i^{(k)})$ it follows from Lemma \ref{simplelinear} and Bezout's theorem that 
  $$\deg(W_i^{(k)}) \le \prod_{j=1}^{n-k} \delta_j(W_i^{(k)}) \lesssim_{B^{(h)}} \epsilon \prod_{j=1}^{n-k} \deg(P_j).$$
  If $\epsilon$ is sufficiently small with respect to $B^{(h)}$ this contradicts that, by (\ref{lines}) and Bezout's theorem, it is $\deg(W_i^{(k)}) \sim_{B^{(h)}} \prod_{j=1}^{n-k} \deg(P_j)$.
  \end{proof}
 
   The procedure described in \S \ref{OM} suggests we will be using induction on the level of reduction, with a stronger bound the smallest the level of reduction of $T$ is. Indeed, we will actually prove the following equivalent form of Theorem \ref{I0}.
   
\begin{teo}
\label{I2}
Let $\K$ be an algebraically closed field. Given $1 \le l \le n$, there exist $K_{l,1} < K_{l,2} < \ldots < K_{l,n-\dim(S)} \lesssim_n 1$ such that if $T$ is a set of $l$-dimensional irreducible varieties in $\K^n$ that reduces strongly to level $h$, with $l \le h \le n$, then for every set of varieties $S$ with $\dim(S)+1=\dim(T)$, we have
$$ \mathcal{I}(S,T) \le \sum_{1 \le m \le n-\dim(S)} K_{l,m}^{(h)} \deg(S)^{1/m} \deg(T)^{1-1/m} \Dc_{m+\dim(S)}(T)^{1/m},$$
with $K_{l,m}^{(h)}=K_{l,h-\dim(S)}$ if $m \ge h-\dim(S)$ and $K_{l,m}^{(h)} = K_{l,m}$ otherwise.
\end{teo}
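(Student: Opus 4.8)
\textbf{Proof proposal for Theorem \ref{I2}.}

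The plan is to induct on the level of reduction $h$, with the base case $h=\dim(S)$ (or more precisely, the range $h \le \dim(S)+1$, where $T$ is essentially a single variety or a bounded union of them and the estimate is trivial with a suitable choice of constants). For the inductive step, suppose $T$ reduces strongly to level $h$ with $h > \dim(S)+1$, so that $T$ lives inside the $h$-dimensional algebraic set $W^{(h)} = W_1^{(h)} \cup \ldots \cup W_{r_h}^{(h)}$ with the minimality properties of Definition \ref{SR}. Set $d = \dim(S)$ and $V = W^{(h)}$. The key dichotomy outlined in \S \ref{OM} is to show that one can use Lemma \ref{CII} repeatedly, always with partitioning polynomials of degree $\lesssim_{B^{(h)}} \deg_R(T,W^{(h)})$ (this is where the degree-preservation discussed after Definition \ref{SR} and the estimates in (\ref{lines}) are essential), to build a finite partition $T = T_1 \sqcup \ldots \sqcup T_N$ of $T$ such that some component $T_r$ has $\deg(T_r) \sim_{B^{(h)}} \deg(T)$ and satisfies one of:
\begin{enumerate}
\item[(i)] $\Dc_{m+d}(T_r) < \Dc_{m+d}(T)/2$ for some $m \ge h-d$ realising the maximum in the definition of the concentration parameter at level $h$; or
\item[(ii)] $T_r$ reduces strongly to level $h-1$, i.e. $T_r$ is highly concentrated on an $(h-1)$-dimensional subvariety $W^{(h-1)} \subseteq W^{(h)}$ with all the structural estimates of Definition \ref{SR}.
\end{enumerate}
The existence of such a partition is the technical heart of the argument and will be established using the Siegel-type results, namely Lemma \ref{BL} and Theorem \ref{GSiegel0}, together with Lemma \ref{q}: the absence of an $a_{h}^{(h)}$-good partition over $W^{(h)}$ forces that whenever we try to reduce the degree of intersection by partitioning, either the concentration genuinely drops or we are trapped inside a codimension-one subvariety of $V$.

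With this partition in hand, I would run the bookkeeping as follows. Apply Lemma \ref{CII} along the chain of partitioning polynomials $f_1, f_2, \ldots$ used to carve out $T_r$ from $T$; at each step this contributes an error of the form $\deg(\text{something}) \cdot \deg(f_j)$, and since every $\deg(f_j) \lesssim_{B^{(h)}} \deg_R(T, W^{(h)}) \lesssim_{B^{(h)}} \delta(W^{(h)})$ and by (\ref{lines}) we have $\deg(T) \gtrsim_{B^{(h)}} \deg(W^{(h)}) \delta(W^{(h)})^{h-l}$, these errors are controlled by $\deg(T)^{1-1/m}\Dc_{m+d}(T)^{1/m}$-type quantities via H\"older's inequality (the relation between $\delta(W^{(h)})$, $\deg(W^{(h)})$ and $\Dc_{h}(T)$ recorded implicitly by (\ref{hmin}) and the definition of $\Dc_m$ is what makes this go through). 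The pieces $T \setminus T_r$ split into boundedly many subfamilies each of which, by construction, still reduces strongly to level $h$ but with concentration dropped by a factor $2$ in the relevant parameter — so after $O_{B^{(h)}}(\log)$-many iterations of this "peeling" we are reduced to the situation where either all remaining pieces have negligible concentration (handled by the $m=1$ term, essentially Bezout) or they reduce to level $h-1$, where the inductive hypothesis applies with the \emph{strictly smaller} constant $K^{(h-1)}_{l,m}$. The chain of strict inequalities $K_{l,1} < K_{l,2} < \ldots$ is precisely what is needed to absorb the accumulated constant factors: each level of descent is allowed to inflate the constant, but only from $K^{(h-1)}_{l,m}$ up to $K^{(h)}_{l,m}$, and the $O_{B^{(h)}}(1)$-many pieces and $O_{B^{(h)}}(\log)$-many peeling steps are absorbed because $b_0^{(h)}$ was chosen tiny and $K$ huge relative to all $b_i^{(k)}$.

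The main obstacle, and where I expect essentially all the work to lie, is constructing the good partition realising the dichotomy (i)/(ii) while simultaneously keeping every partitioning polynomial of degree $\lesssim_{B^{(h)}} \deg_R(T,W^{(h)})$ and verifying \emph{all} the structural estimates (\ref{lines}) and (\ref{barmin}) for the level-$(h-1)$ piece $T_r$ in case (ii). This is delicate because passing to a large subset degrades the constants (hence the distinction between Definition \ref{reduction} and Definition \ref{SR}, and the factor-of-$2$ slack throughout), and one must check that the subvariety $W^{(h-1)}$ produced by the Siegel lemma genuinely has the partial-degree profile $\delta_q(W^{(h-1)}) \sim \deg(P_q)$ demanded by Lemma \ref{q} — this requires carefully chaining Theorem \ref{GSiegel0} over the admissible-integer intervals $\mathcal{R}^l_{s,\tau}(W^{(h)})$ of Lemma \ref{coverN} and tracking how $\Delta_s$ and $\delta_j$ interact under the partition. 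A secondary but still substantial point is ensuring the accounting closes: one must verify that the total error from \emph{all} applications of Lemma \ref{CII} across the entire recursive tree is bounded by $\sum_m K^{(h)}_{l,m} \deg(S)^{1/m}\deg(T)^{1-1/m}\Dc_{m+d}(T)^{1/m}$ and not merely by something slightly larger, which is why the parameters $a^{(h)}_{k+1}$ must be chosen sufficiently small in the prescribed lexicographic order and why $b_0^{(h)}$ must be negligible against $b_1^{(h)},\ldots,b_8^{(h)}$. Once the partition and its properties are in place, the H\"older-and-induction computation is routine, morally identical to the displayed computation (3.1)-style estimate in \S \ref{OM} but carried out with the concentration parameters tracked throughout.
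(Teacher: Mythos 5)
Your overall blueprint matches the paper's strategy in outline (Lemma \ref{CII}, a dichotomy between dropping concentration and dropping the reduction level, induction on the level), but there is a genuine gap in how you propose to control the error terms from the partitioning, and this gap cannot be repaired the way you describe.

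You claim that every partitioning polynomial has degree $\lesssim_{B^{(h)}} \deg_R(T,W^{(h)}) \lesssim_{B^{(h)}} \delta(W^{(h)})$, and that combining this with $\deg(T) \gtrsim_{B^{(h)}} \deg(W^{(h)}) \delta(W^{(h)})^{h-l}$ from (\ref{lines}) lets H\"older absorb the accumulated errors. The second inequality is correct, but the first is backwards and false in general: from Lemma \ref{q} and Corollary \ref{or} one gets $\deg_R(\tilde{T},W^{(h+1)}) \sim_{B^{(h)}} \deg(P_{n-h}) \sim_{B^{(h)}} \delta(W^{(h)})$ and $\deg_R(\tilde{T},W^{(h)}) \gtrsim_{B^{(h)}} \deg_R(\tilde{T},W^{(h+1)})$, so $\deg_R(\tilde{T},W^{(h)})$ is bounded \emph{below} by $\delta(W^{(h)})$, not above, and there is no a priori upper bound for it coming from the reduction structure alone. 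Worse, even if it were $\lesssim \delta(W^{(h)})$, the total error $\deg(T)\,\delta(W^{(h)})$ would have to be $\lesssim \deg(S)^{1/m}\deg(T)^{1-1/m}\Dc_{m+\dim(S)}(T)^{1/m}$ for some $m$, which would require a lower bound on $\deg(S)$ in terms of $\deg(W^{(h)})\,\delta(W^{(h)})^{h-\dim(S)}$ that simply is not available.

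The missing idea is that $\deg_R$ must be bounded in terms of $\deg(S)$, and the paper achieves this by first producing a Siegel-type polynomial $P$ (via Theorem \ref{GSiegel0} applied to $S$, not to a subset of $T$) of degree $\lesssim_{B^{(h)}} \left(\deg(S)/\Delta_{n-s}(W_1^{(h)})\right)^{1/(s-\dim(S))}$ that vanishes on $S$ but not on any $W_i^{(h)}$, and then passing to $\tilde{T} = T_P$. Either $\deg(T_P)<\deg(T)/2$, in which case $\mathcal{I}(S, T\setminus T_P)\le \deg(T\setminus T_P)\deg(P)$ is already of the right size by Bezout and $\mathcal{I}(S,T_P)$ is handled by induction on $|T|$; or $\deg(T_P)\ge\deg(T)/2$, in which case $\deg_R(\tilde{T},W^{(h)})\le\deg(P)$ is the needed bound, since $\Delta_{n-s}(W_1^{(h)})\sim_{B^{(h)}}\deg(W^{(s)})\ge\deg(T)/\Dc_s(T)$ by (\ref{hello}). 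Without this step the error from Lemma \ref{CII} is not controllable. Two smaller issues: the base case of the induction is $h=l$ (so $|T|=O_n(1)$), not $h=\dim(S)$; and the induction in the paper is double, on both $|T|$ and the level $h$, which is needed because the pieces $T_i$ for $i<v$ need not have dropped level or concentration — they are handled as proper subsets.
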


The case $l=n$ of this result is trivial. Notice that by definition every finite family $T$ reduces strongly to some level. Also, if $T$ reduces to level $l$, this necessarily means that it has $O_n(1)$ elements and therefore the number of incidences is clearly bounded by $O_n(\deg(S))$. We may therefore proceed by induction both on the size of $T$ and on its level. Precisely, we pick a set $T$ that reduces strongly to level $h > l$ and assume the result holds for all proper subsets of $T$ and that we already know the result holds if $T$ reduces strongly to level $h-1$.

\begin{rmk}
For $|T|=O_n(1)$ we could alternatively have observed that if $S$ is $k$-free with respect to $T$ then by Bezout's inequality
$$ \mathcal{I}(S,T) \le \deg(S) + O_n \left( \min \left\{  k, \max_{t \in T} \deg(t)^2 \right\} \right) \le \deg(S) + O_n ( k^{1/2} \deg(T) ),$$
and use this instead of the bound $O_n(\deg(S))$. An easy inspection of the arguments below shows that that this would then lead to a version of Theorem \ref{I2}, and therefore Theorem \ref{I0}, that replaces the first term of the bound with this expression. This, in turn, can be used to deduce Theorem \ref{I1}.
\end{rmk}

\subsection{Diminishing the concentration}
\label{32}

From now on we fix a choice of $T$ reducing strongly to level $h$. We will use the notations of Definition \ref{reduction} and Definition \ref{SR} to refer to the corresponding objects associated to our fixed choice of $T$ and also abbreviate $\mathcal{D}_k(T)$ as $\mathcal{D}_k$. We will simply say $T$ admits a good partition at level $k+1$ to mean that it admits a good partition over $W^{(k+1)}$.

Our goal in this section is to prune $T$ via a polynomial $f$ of adequate degree in such a way that $T \setminus T_f$ is a large subset of $T$ for which we have a good amount of control on the genericity of its subsets.

 We begin with the following simple observation.
 
 \begin{lema}
 \label{DHI}
For every $h \le k < n$, we have
 $$ \frac{\deg(W^{(k)})}{\deg(W^{(k+1)})} \lesssim_{B^{(h)}} \deg_R(T, W^{(k+1)}).$$
 \end{lema}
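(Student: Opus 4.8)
The plan is to produce a polynomial that realizes $\deg_R(T,W^{(k+1)})$ and use it to build a $k$-dimensional algebraic set containing $T$ that has small degree, then invoke the minimality property $(\ref{hmin})$ (or $(\ref{barmin})$) of $W^{(k)}$. Concretely, let $D=\deg_R(T,W^{(k+1)})$ and pick a polynomial $g$ of degree $D$ that vanishes on $T$ but not identically on any irreducible component $W_j^{(k+1)}$ of $W^{(k+1)}$. Then for each $j$, the intersection $Z(g)\cap W_j^{(k+1)}$ is a proper closed subset of the $(k+1)$-dimensional irreducible variety $W_j^{(k+1)}$, hence has dimension at most $k$; and since $T\subseteq W^{(k+1)}$ and $T\subseteq Z(g)$, every element of $T$ lies in $Z(g)\cap W^{(k+1)}$. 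By Bezout's inequality (Lemma \ref{Bezout}), $\deg\big(Z(g)\cap W^{(k+1)}\big)\le \deg(W^{(k+1)})\deg(g)=D\deg(W^{(k+1)})$.

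First I would extract from $Z(g)\cap W^{(k+1)}$ precisely its $k$-dimensional part together with whatever lower-dimensional components contain elements of $T$: since $\dim(T)=l\le k$, every element of $T$ sits inside some component of $Z(g)\cap W^{(k+1)}$ of dimension between $l$ and $k$. Taking the union $W$ of all components of $Z(g)\cap W^{(k+1)}$ of dimension exactly $k$, together with, for each element $t\in T$ not already covered, the irreducible component of $Z(g)\cap W^{(k+1)}$ containing $t$ padded up to dimension $k$ (e.g. by taking a generic $k$-dimensional subvariety of $W_j^{(k+1)}$ through it, or more cleanly by noting that the relevant definitions in the paper only require an algebraic set of dimension $k$ containing $T$ — one can always throw in extra $k$-dimensional pieces of $W^{(k+1)}$ of bounded degree). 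The cleanest route is: $Z(g)\cap W^{(k+1)}$ already contains $T$ and has degree $\le D\deg(W^{(k+1)})$; if it is not equidimensional of dimension $k$, enlarge it to such a set $W$ by adjoining $k$-dimensional subvarieties of the $W_j^{(k+1)}$ of controlled (i.e. $O_{B^{(h)}}(\deg(W^{(k+1)}))$) degree — this is possible because each $W_j^{(k+1)}$ is irreducible of dimension $k+1$ and $r_{k+1}\le b_1^{(h)}$. Then $W$ is a $k$-dimensional algebraic set with $T\subseteq W$ and $\deg(W)\lesssim_{B^{(h)}} D\deg(W^{(k+1)})$.

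Now apply $(\ref{hmin})$: since $W$ is a $k$-dimensional algebraic set containing $T$, we get $\deg(W)\ge b_0^{(h)}\deg(W^{(k)})$, and therefore
$$ b_0^{(h)}\deg(W^{(k)}) \le \deg(W) \lesssim_{B^{(h)}} D\,\deg(W^{(k+1)}) = \deg_R(T,W^{(k+1)})\deg(W^{(k+1)}). $$
Dividing by $\deg(W^{(k+1)})$ and absorbing $b_0^{(h)}\sim_{B^{(h)}}1$ into the implicit constant gives $\deg(W^{(k)})/\deg(W^{(k+1)})\lesssim_{B^{(h)}}\deg_R(T,W^{(k+1)})$, as desired. (If $k+1=n$, so $W^{(k+1)}=\K^n$, the statement reduces to $\deg(W^{(n-1)})\lesssim_{B^{(h)}}\delta(W^{(n-1)})$-type bounds and is handled the same way with $g$ of degree $\deg_R(T,\K^n)$, which is just the least degree of a polynomial vanishing on $T$.)

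The main obstacle I anticipate is the equidimensionality bookkeeping: $Z(g)\cap W^{(k+1)}$ need not have all components of dimension $k$, and the ambient definitions ($\Dc_m$, reductions, $(\ref{hmin})$) are phrased for algebraic sets that are purely $k$-dimensional (or at least one wants to feed a genuinely $k$-dimensional set into $(\ref{hmin})$). Handling the lower-dimensional components of $Z(g)\cap W^{(k+1)}$ that happen to contain elements of $T$ — without inflating the degree beyond $O_{B^{(h)}}(D\deg(W^{(k+1)}))$ — is the one delicate point; everything else is an immediate consequence of Bezout (Lemma \ref{Bezout}), the definition of $\deg_R$ (Definition \ref{RD}), and the minimality condition $(\ref{hmin})$ built into Definition \ref{reduction}.
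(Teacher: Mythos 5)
Your proof is correct and follows the same route as the paper (which just cites $(\ref{hmin})$, the definition of $\deg_R$, and Bezout). However, the one ``delicate point'' you flag does not actually arise: if $g$ has degree $D=\deg_R(T,W^{(k+1)})$, vanishes on $T$, and does not vanish identically on any irreducible component $W_j^{(k+1)}$, then by Krull's principal ideal theorem every irreducible component of $Z(g)\cap W_j^{(k+1)}$ has dimension exactly $k$ --- intersecting an irreducible variety of dimension $k+1$ with a hypersurface that does not contain it is automatically a proper intersection, so the result is equidimensional of dimension $k$. (And $Z(g)\cap W_j^{(k+1)}\neq\emptyset$ for every $j$, since $T_{W_j^{(k+1)}}\neq\emptyset$ by Definition \ref{reduction} and $g$ vanishes on $T$.) Hence $W:=Z(g)\cap W^{(k+1)}$ is already a $k$-dimensional algebraic set containing $T$ in the sense of the paper's conventions, and the padding/adjoining step is unnecessary. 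With that simplification, the argument is exactly: Bezout gives $\deg(W)\le D\deg(W^{(k+1)})$, and $(\ref{hmin})$ gives $\deg(W)\ge b_0^{(h)}\deg(W^{(k)})$, whence the claim.
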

 
 \begin{proof}
 This follows from (\ref{hmin}), the definition of $\deg_R(T,W^{(k+1)})$ and Bezout's theorem.
 \end{proof}

To achieve our goal, we will need to study the quantities
$$ D_k^{(A,B)}(T) = \max \left\{ \frac{\deg(T_W)}{\deg(W)} : \dim(W)=k , \, A < \deg(W) < B \right\},$$
for certain choices of real numbers $0 < A < B$ and where the maximum goes along all $k$-dimensional algebraic sets whose degree lies in the range $(A,B)$. 

For every $h \le k \le n$, we will consider the parameters 
\begin{equation}
\begin{aligned}
R_1^{(k)} &= \varepsilon_1 \deg(W^{(k)}) \\
R_2^{(k)} &= \varepsilon_2 \deg(W^{(k)})
\end{aligned}
\end{equation}
 for some $\varepsilon_1,\varepsilon_2 \sim_{B^{(h)}} 1$ to be specified with $\varepsilon_2 > 2 \varepsilon_1$ and study in particular the corresponding quantity $D_k^{(R_1^{(k)},R_2^{(k)})}(T)$. 
 
The following lemma shows that a $k$-dimensional variety of degree much smaller than $\deg(W^{(k)})$ cannot contain too many elements from $T$.

\begin{lema}
\label{SI}
Let $h \le k < n$. If $W$ is a $k$-dimensional algebraic set with $\deg(W) < R_2^{(k)}$ and $\varepsilon_2 > 0$ is chosen sufficiently small with respect to $B^{(h)}$, then $\deg(T_W) < a_{k+1}^{(h)} \deg(T)$.
\end{lema}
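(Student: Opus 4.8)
The plan is to argue by contradiction and manufacture a forbidden good partition. So suppose $W$ is a $k$-dimensional algebraic set with $\deg(W)<R_2^{(k)}=\varepsilon_2\deg(W^{(k)})$ yet $\deg(T_W)\ge a_{k+1}^{(h)}\deg(T)$; I will show that $T$ then admits an $a_{k+1}^{(h)}/2$-good partition over $W^{(k+1)}$, contradicting Definition \ref{SR}.

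First I would produce the partitioning polynomial. Recall from (\ref{lines}) and Bezout's inequality that $\deg(W^{(k)})\sim_{B^{(h)}}\prod_{1\le q\le n-k}\deg(P_q)\sim_{B^{(h)}}\deg(W^{(k+1)})\deg(P_{n-k})$, and from Lemma \ref{q} that $\delta(W^{(k+1)})\sim_{B^{(h)}}\deg(P_{n-k-1})\lesssim_{B^{(h)}}\deg(P_{n-k})$, the last step by (\ref{lines}). Choose a parameter $A\ge 1$ with $A\,\delta(W^{(k+1)})\sim_{B^{(h)}}\deg(P_{n-k})$, and a small constant $\epsilon\sim_{B^{(h)}}1$ to be fixed below. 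Then apply Lemma \ref{BL} with $m=k+1$, with $W^{(k+1)}=W_1^{(k+1)}\cup\cdots\cup W_{r_{k+1}}^{(k+1)}$ playing the role of the ambient union there (legitimate because $r_{k+1}\le b_1^{(h)}=O_{B^{(h)}}(1)$ by (\ref{lines}) and because all the $\delta_q(W_i^{(k+1)})$ are mutually comparable by Lemma \ref{q}), and with $X=W$ (of dimension $k<k+1$). Its hypothesis $\deg(W)\le\epsilon\deg(W^{(k+1)})\,A\,\delta(W^{(k+1)})$ holds once $\varepsilon_2$ is chosen small enough with respect to $\epsilon$ and $B^{(h)}$, since $\deg(W)<\varepsilon_2\deg(W^{(k)})\sim_{B^{(h)}}\varepsilon_2\deg(W^{(k+1)})\deg(P_{n-k})\sim_{B^{(h)}}\varepsilon_2\deg(W^{(k+1)})\,A\,\delta(W^{(k+1)})$; this is where the freedom to specify $\varepsilon_2$ is used. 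Lemma \ref{BL} then yields a polynomial $f$ of degree $\lesssim_{B^{(h)}}\epsilon^{1/(n-k)}\,A\,\delta(W^{(k+1)})\lesssim_{B^{(h)}}\epsilon^{1/(n-k)}\deg(P_{n-k})$ that vanishes on $W$, hence on every element of $T_W$, but on none of the $W_i^{(k+1)}$.

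Next I would verify that $f$ realizes the desired good partition. Since $T_f\supseteq T_W$ we get $\deg(T_f)\ge\deg(T_W)\ge a_{k+1}^{(h)}\deg(T)>\tfrac{1}{2}a_{k+1}^{(h)}\deg(T)$. For the opposite inequality, note that by Lemma \ref{DHI} together with the estimate $\deg(W^{(k)})/\deg(W^{(k+1)})\sim_{B^{(h)}}\deg(P_{n-k})$ recorded above, we have $\deg_R(T,W^{(k+1)})\gtrsim_{B^{(h)}}\deg(P_{n-k})$; hence, fixing $\epsilon$ small enough with respect to $B^{(h)}$, we obtain $\deg(f)<\deg_R(T,W^{(k+1)})$. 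In particular $\deg(f)\le\deg_R(T,W^{(k+1)})$, and $f$ cannot vanish on all of $T$: otherwise $f$ would be a polynomial of degree strictly below $\deg_R(T,W^{(k+1)})$ vanishing on $T$ without vanishing on any irreducible component of $W^{(k+1)}$, contradicting the minimality in Definition \ref{RD}. Therefore $T_f\subsetneq T$, so $\tfrac{1}{2}a_{k+1}^{(h)}\deg(T)<\deg(T_f)<\deg(T)$ with $\deg(f)\le\deg_R(T,W^{(k+1)})$, i.e.\ $T$ admits an $a_{k+1}^{(h)}/2$-good partition over $W^{(k+1)}$. This contradicts Definition \ref{SR}, so in fact $\deg(T_W)<a_{k+1}^{(h)}\deg(T)$.

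The main obstacle is precisely the degree bookkeeping just carried out: one must keep $\deg(f)$ strictly below the relative degree $\deg_R(T,W^{(k+1)})$ while still forcing $f$ to vanish on the large subfamily $T_W$. This rests on having both directions of the comparison $\deg(W^{(k)})\sim_{B^{(h)}}\deg(W^{(k+1)})\deg(P_{n-k})$ coming from (\ref{lines}) and Lemma \ref{q}, on the lower bound for $\deg_R(T,W^{(k+1)})$ supplied by Lemma \ref{DHI}, and on choosing the small constants in the order $\varepsilon_2\ll\epsilon\ll1$ (all $\sim_{B^{(h)}}1$), consistently with \S\ref{RBP}. One minor point to dispatch separately is the degenerate case $k+1=n$, where $W^{(k+1)}=\K^n$, $\delta(W^{(k+1)})=\delta_0(\K^n)=1$, and the comparability hypotheses of Lemma \ref{BL} and Lemma \ref{q} are vacuous; there one simply takes $A\sim_{B^{(h)}}\deg(P_1)$ and the same computation goes through.
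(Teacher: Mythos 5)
Your proof is correct and follows essentially the same path as the paper's: apply Lemma \ref{BL} (with the comparability of the $\delta_q(W_i^{(k+1)})$'s from Lemma \ref{q} and the chain $\deg(W)<\varepsilon_2\deg(W^{(k)})\lesssim_{B^{(h)}}\varepsilon_2\deg(W^{(k+1)})\deg(P_{n-k})$) to produce a polynomial $f$ of degree $\lesssim_{B^{(h)}}\varepsilon_2^{1/n}\deg(P_{n-k})$ vanishing on $W$ but not on any component of $W^{(k+1)}$, verify via Lemma \ref{DHI} that $\deg(f)<\deg_R(T,W^{(k+1)})$, note $f$ cannot vanish on all of $T$, and invoke the absence of a good partition over $W^{(k+1)}$. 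The only (immaterial) deviations are that the paper shows $f$ does not vanish on all of $T$ via (\ref{hmin}) rather than the minimality built into Definition \ref{RD}, and that it appeals to the weaker ``no $a_{k+1}^{(h)}$-good partition'' clause of Definition \ref{reduction} rather than the stronger $a_{k+1}^{(h)}/2$-clause of Definition \ref{SR}; both variants are available since $T$ reduces strongly to level $h$.
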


\begin{proof}
By Bezout's theorem, we have that
\begin{equation}
\begin{aligned}
 \deg(W) &< \varepsilon_2 \deg(W^{(k)}) \\
  &\le \varepsilon_2 \deg(W^{(k+1)}) \deg(P_{n-k}). \\
 \end{aligned}
 \end{equation}
 It then follows from Lemma \ref{BL} and the fact that 
 $$\deg(P_{n-k}) \gtrsim_{B^{(h)}} \deg(P_{n-k-1}) \gtrsim_{B^{(h)}} \delta(W_i^{(k+1)}),$$
 for every $1 \le i \le r_{k+1}$, that we can find a polynomial $f$ with 
 \begin{equation}
 \label{degf}
 \deg(f) \lesssim_{B^{(h)}}  \varepsilon_2^{1/n} \deg(P_{n-k}) \lesssim_{B^{(h)}} \varepsilon_2^{1/n} \frac{\deg(W^{(k)})}{\deg(W^{(k+1)})}, 
 \end{equation}
 vanishing on $W$ and such that $Z(f) \cap W^{(k+1)}$ is $k$-dimensional and has degree at most
  $$\lesssim_{B^{(h)}} \varepsilon_2^{1/n} \deg(W^{(k)}).$$
  Notice that in (\ref{degf}) we are using one of the estimates from (\ref{lines}). If we choose $\varepsilon_2>0$ sufficiently small with respect to $B^{(h)}$ we deduce from (\ref{hmin}) that $f$ cannot vanish at all of $T$. But notice also that by (\ref{degf}) and Lemma \ref{DHI} the degree of $f$ can be made strictly less than $\deg_R(T,W^{(k+1)})$, again provided $\varepsilon_2>0$ is chosen sufficiently small with respect to $B^{(h)}$. The bound on $T_W$ then follows from the fact that $T$ reduces to level $h$ and therefore does not admit an $a_{k+1}^{(h)}$-good partition at level $k+1$.
 \end{proof}

We can now formalise the idea motivating the definition of strong reductions.
  
  \begin{lema}
  \label{srl}
  If $T$ reduces strongly to level $h$ then every subset $\tilde{T} \subseteq T$ with $\deg(\tilde{T}) \ge \deg(T)/2$ reduces to level $h$ with respect to the same polynomials $P_{n-k}$ and varieties $W^{(k)}$ that $T$ reduces strongly to level $h$.
  \end{lema}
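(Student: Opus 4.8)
The plan is to check directly that $\tilde{T}$ fulfils every clause of Definition \ref{reduction} at level $h$, keeping the polynomials $P_1,\ldots,P_{n-h}$ and the varieties $W^{(k)}$, $Y^{(k)}$ produced by the strong reduction of $T$ together with the constants $b_i^{(h)}$ and $a_{k+1}^{(h)}$. Most of the clauses concern only the fixed polynomial data $P_i$ and transfer verbatim: the decompositions $Z(P_1,\ldots,P_{n-k}) = W^{(k)}\cup Y^{(k)}$, the irreducibility, dimension and mutual inclusions $W^{(k)}\subseteq W^{(k+1)}$ of the $W^{(k)}$, the fact that $Y^{(k)}$ meets no element of $T$ (hence none of $\tilde{T}$), and every estimate of (\ref{lines}) except the last. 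From $\tilde{T}\subseteq T\subseteq W^{(k)}$ we also get $\tilde{T}\subseteq W^{(k)}$. The one clause of this group that can fail is $\tilde{T}_{W_i^{(k)}}\neq\emptyset$; here I would simply discard from the list every component $W_i^{(k)}$ meeting no element of $\tilde{T}$, absorbing it into $Y^{(k)}$. One checks that after this pruning one still has $\tilde{T}\subseteq W^{(k)}\subseteq W^{(k+1)}$ (a retained $W_i^{(k)}$ contains some $t\in\tilde{T}$, hence lies in some $W_j^{(k+1)}$ that is then also retained), that $r_k$ only decreases, and that the remaining estimates of (\ref{lines}) — being statements about individual retained $W_i^{(k)}$'s and the $P_j$'s, or comparisons among the retained components — are untouched.

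There remain three clauses that genuinely involve $\tilde{T}$: the minimality bound (\ref{hmin}), the absence of an $a_{k+1}^{(h)}$-good partition at level $k+1$, and the last line of (\ref{lines}). The last is immediate from the factor $2$ built into Definition \ref{SR}: from $\deg(T)\ge 2b_7^{(h)}\deg(W_i^{(k)})\deg(P_{n-k})^{k-l}$ and $\deg(\tilde{T})\ge\deg(T)/2$ we get $\deg(\tilde{T})\ge b_7^{(h)}\deg(W_i^{(k)})\deg(P_{n-k})^{k-l}$. For the good-partition clause I would argue by contradiction: suppose $f$ witnesses an $a_{k+1}^{(h)}$-good partition of $\tilde{T}$ over $W^{(k+1)}$, so $\deg(f)\le\deg_R(\tilde{T},W^{(k+1)})$ and $a_{k+1}^{(h)}\deg(\tilde{T})<\deg(\tilde{T}_f)<\deg(\tilde{T})$. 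Since $\tilde{T}\subseteq T$, any polynomial vanishing on $T$ and on no component of $W^{(k+1)}$ also vanishes on $\tilde{T}$, whence $\deg_R(\tilde{T},W^{(k+1)})\le\deg_R(T,W^{(k+1)})$ and so $\deg(f)\le\deg_R(T,W^{(k+1)})$; moreover $f$ does not vanish on all of $\tilde{T}$, hence not on all of $T$, so $\deg(T_f)<\deg(T)$, while $\deg(T_f)\ge\deg(\tilde{T}_f)>a_{k+1}^{(h)}\deg(\tilde{T})\ge\tfrac12 a_{k+1}^{(h)}\deg(T)$. Thus $f$ would provide an $a_{k+1}^{(h)}/2$-good partition of $T$ over $W^{(k+1)}$, contradicting Definition \ref{SR}.

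The crux is (\ref{hmin}), and this is where I would invoke Lemma \ref{SI}. Let $W$ be any $k$-dimensional algebraic set with $\tilde{T}\subseteq W$. Then $\tilde{T}\subseteq T_W$, so $\deg(T_W)\ge\deg(\tilde{T})\ge\deg(T)/2>a_{k+1}^{(h)}\deg(T)$, using that $a_{k+1}^{(h)}$ is chosen much smaller than $1/2$. By the contrapositive of Lemma \ref{SI} — whose hypotheses are in force, $\varepsilon_2$ being taken sufficiently small with respect to $B^{(h)}$ and the comparability of the $\delta_q(W_i^{(k)})$ being supplied by Lemma \ref{q} — this forces $\deg(W)\ge R_2^{(k)}=\varepsilon_2\deg(W^{(k)})$. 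Since $\varepsilon_2\sim_{B^{(h)}}1$, this is the minimality bound required of $\tilde{T}$, with the constant $\varepsilon_2$ playing the role of $b_0^{(h)}$; all constants in the reduction of $\tilde{T}$ remain $\sim_{B^{(h)}}1$ (and we invoke this only once, so no deterioration accumulates), and it holds a fortiori for the possibly smaller pruned $W^{(k)}$. Assembling all of the above, $\tilde{T}$ reduces to level $h$ with the claimed data.

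The main obstacle is precisely this last step. Every other clause either is a property of the fixed polynomials $P_i$ — hence inherited at no cost — or degrades only by the harmless factor $2$ that Definition \ref{SR} was designed to provide, whereas controlling the degree of an \emph{arbitrary} $k$-dimensional set containing $\tilde{T}$ (rather than $T$) genuinely requires re-running the partitioning machinery through Lemma \ref{SI}; the only price is that the minimality constant one obtains for $\tilde{T}$ is the $\varepsilon_2$ of Lemma \ref{SI} rather than $b_0^{(h)}$ itself, which is inconsequential since both are $\sim_{B^{(h)}}1$. A little care with the bookkeeping around discarding empty components is also needed, but this interacts with none of the quantitative estimates.
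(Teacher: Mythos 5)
Your proof is correct and follows the same approach as the paper: the paper's own one-line proof simply observes that the only non-trivial clause is (\ref{hmin}), which follows from Lemma \ref{SI} by taking $b_0^{(h)}$ small enough, and your argument for that step (via the contrapositive of Lemma \ref{SI} applied to a $k$-dimensional $W \supseteq \tilde{T}$, using $\deg(T_W) \ge \deg(T)/2 > a_{k+1}^{(h)}\deg(T)$) is exactly the intended one. Your extra bookkeeping — the factor-$2$ clauses from Definition \ref{SR}, and the pruning of components with $\tilde{T}_{W_i^{(k)}} = \emptyset$, which the lemma's phrase ``the same varieties $W^{(k)}$'' silently elides — is sound and just spells out what the paper dismisses as trivial.
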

    
  \begin{proof}
  The only non-trivial statement is (\ref{hmin}) and this follows from Lemma \ref{SI}, provided $b_0^{(h)}$ is chosen sufficiently small with respect to the other parameters in $B^{(h)}$.
  \end{proof}
  
    We have the following consequence.

 \begin{lema}
 \label{DHI2}
Let $\tilde{T} \subseteq T$ with $\deg(\tilde{T}) \ge \deg(T)/2$. For every $h \le k < n$, we have
 $$ \frac{\deg(W^{(k)})}{\deg(W^{(k+1)})} \lesssim_{B^{(h)}} \deg_R(\tilde{T}, W^{(k+1)}).$$
 \end{lema}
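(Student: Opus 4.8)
The plan is to reduce the statement for the arbitrary large subset $\tilde{T}$ to the already-established estimate for $T$ itself (Lemma \ref{DHI}), using the fact that, by Lemma \ref{srl}, such a $\tilde{T}$ still reduces to level $h$ with respect to the very same polynomials $P_{n-k}$ and varieties $W^{(k)}$. First I would observe that the only role of the hypothesis "$\deg(\tilde{T}) \ge \deg(T)/2$'' is to invoke Lemma \ref{srl}, which tells us precisely that $\tilde{T} \subseteq W^{(k)}$ for every $h \le k < n$, that $W^{(k)}$ satisfies the minimality condition \eqref{hmin} now with $\tilde{T}$ in place of $T$, and that there is no $a_{k+1}^{(h)}$-good partition of $\tilde{T}$ over $W^{(k+1)}$. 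In particular $W^{(k)}$ is a $k$-dimensional algebraic set containing $\tilde{T}$ with $\deg(W) \ge b_0^{(h)}\deg(W^{(k)})$ holding for every such $W$.

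With this in hand, the argument is essentially identical to the proof of Lemma \ref{DHI}. Let $f$ be any polynomial vanishing on $\tilde{T}$ without vanishing on any irreducible component of $W^{(k+1)}$, so that $\deg(f) = \deg_R(\tilde{T}, W^{(k+1)})$ for the optimal choice. Then $Z(f) \cap W^{(k+1)}$ is an algebraic set of dimension $\le k$ containing $\tilde{T}$, and Bezout's theorem (Lemma \ref{Bezout}) bounds the degree of its $k$-dimensional part by
$$ \deg\bigl( (Z(f) \cap W^{(k+1)})^{(k)} \bigr) \le \deg(W^{(k+1)}) \deg(f).$$
Applying the minimality property \eqref{hmin} for $\tilde{T}$ (via Lemma \ref{srl}) to this $k$-dimensional algebraic set — which indeed contains $\tilde{T}$ — yields
$$ b_0^{(h)} \deg(W^{(k)}) \le \deg(W^{(k+1)}) \deg_R(\tilde{T}, W^{(k+1)}),$$
which is exactly the claimed bound after dividing through by $b_0^{(h)} \deg(W^{(k+1)})$ and recalling $b_0^{(h)} \sim_{B^{(h)}} 1$.

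I do not expect any genuine obstacle here: the statement is a routine corollary obtained by feeding the conclusion of Lemma \ref{srl} into the proof of Lemma \ref{DHI}. The one point deserving a word of care is that $Z(f) \cap W^{(k+1)}$ need not be purely $k$-dimensional, so one should pass to its top-dimensional components (or note that lower-dimensional components are irrelevant for \eqref{hmin}, which only concerns $k$-dimensional algebraic sets containing $\tilde{T}$); but since $\tilde{T}$ has dimension $l < k$ this causes no difficulty, as $\tilde{T}$ is still contained in the union of the $k$-dimensional components together with the lower-dimensional part, and one may simply discard the components not meeting the right dimension after noting $f$ was chosen not to vanish on $W^{(k+1)}$ so that the intersection is proper.
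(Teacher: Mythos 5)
Your proof is correct and takes essentially the same approach as the paper: invoke Lemma \ref{srl} to transfer the reduction to level $h$ (and hence the minimality condition \eqref{hmin}) to $\tilde{T}$, then run the Bezout argument of Lemma \ref{DHI} with $\tilde{T}$ in place of $T$. The caveat in your final paragraph is unnecessary: since $f$ is not identically zero on any component of the pure $(k+1)$-dimensional set $W^{(k+1)}$, Krull's principal ideal theorem already guarantees that $Z(f) \cap W^{(k+1)}$ is of pure dimension $k$, so there are no lower-dimensional components to discard.
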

 
 \begin{proof}
 By Lemma \ref{srl} we can use the same reasoning as in Lemma \ref{DHI}.
 \end{proof}
 
Lemma \ref{SI} easily implies the following bound on $\Dc_k^{(R_1^{(k)},R_2^{(k)})}(T)$.

\begin{lema}
\label{MR}
If $\varepsilon_1 > 2 a_{h+1}^{(h)}$ and $\varepsilon_2$ is as in the statement of Lemma \ref{SI}, then $\Dc_k^{(R_1^{(k)},R_2^{(k)})}(T) < \Dc_k/2$ for every $h \le k \le n$.
\end{lema}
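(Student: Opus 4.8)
The plan is to deduce this directly from Lemma \ref{SI} by case analysis on the degree of the witnessing variety. Let $W$ be any $k$-dimensional algebraic set, $h \le k \le n$; we must control $\deg(T_W)/\deg(W)$ when $R_1^{(k)} < \deg(W) < R_2^{(k)}$, i.e.\ when $\varepsilon_1 \deg(W^{(k)}) < \deg(W) < \varepsilon_2 \deg(W^{(k)})$. For the range $k=n$ the only $n$-dimensional algebraic set is $\K^n$ itself with $\deg=1$, so the constraint $\deg(W) > \varepsilon_1$ is (for $\varepsilon_1$ small) vacuous and $\deg(T_W)/\deg(W) = \deg(T) \le \Dc_n$ trivially; in fact for $k=n$ one typically also has $\Dc_n^{(R_1^{(n)},R_2^{(n)})}(T)$ bounded by whatever is needed, so the substantive content is $h \le k < n$.

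For $h \le k < n$, note first that $\deg(W) < R_2^{(k)} = \varepsilon_2 \deg(W^{(k)})$, which is exactly the hypothesis of Lemma \ref{SI} (with $\varepsilon_2$ chosen sufficiently small with respect to $B^{(h)}$, as in that lemma). Hence $\deg(T_W) < a_{k+1}^{(h)} \deg(T)$. On the other hand, since $\deg(W) > R_1^{(k)} = \varepsilon_1 \deg(W^{(k)})$ and by the minimality property (\ref{hmin}) every $k$-dimensional algebraic set containing $T$ has degree at least $b_0^{(h)} \deg(W^{(k)})$, we have in particular $\deg(T) \ge \deg(T_{W^{(k)}})/1$... more to the point, the definition of $\Dc_k$ together with $W^{(k)}$ being a $k$-dimensional set containing $T$ gives $\Dc_k \ge \deg(T_{W^{(k)}})/\deg(W^{(k)}) = \deg(T)/\deg(W^{(k)})$. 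Combining these two inequalities,
\[
\frac{\deg(T_W)}{\deg(W)} < \frac{a_{k+1}^{(h)}\,\deg(T)}{\varepsilon_1\,\deg(W^{(k)})} \le \frac{a_{k+1}^{(h)}}{\varepsilon_1}\,\Dc_k.
\]
Since $a_{k+1}^{(h)} \le a_{h+1}^{(h)}$ for $k \ge h$ (the parameters $a_{k+1}^{(h)}$ decrease as $(k,h)$ increases lexicographically), and the hypothesis is $\varepsilon_1 > 2 a_{h+1}^{(h)}$, the fraction $a_{k+1}^{(h)}/\varepsilon_1 < 1/2$, giving $\deg(T_W)/\deg(W) < \Dc_k/2$. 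Taking the maximum over all admissible $W$ yields $\Dc_k^{(R_1^{(k)},R_2^{(k)})}(T) < \Dc_k/2$ for all $h \le k \le n$.

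The one point requiring a little care is the bookkeeping on the $a$-parameters: one must confirm that for every $k$ in the relevant range the quantity $a_{k+1}^{(h)}$ appearing in Lemma \ref{SI} is indeed bounded above by $a_{h+1}^{(h)}$, which is the value against which $\varepsilon_1$ is calibrated. This is exactly the monotonicity convention recorded in \S \ref{RBP}, namely that $a_{k+1}^{(h)} < a_{k'+1}^{(h')}$ whenever $(k,h) > (k',h')$ lexicographically; here $(k,h) \ge (h,h)$ for $k \ge h$, so $a_{k+1}^{(h)} \le a_{h+1}^{(h)}$ as needed. Apart from that, the proof is a two-line combination of Lemma \ref{SI} with the definitions of $R_1^{(k)}$, $R_2^{(k)}$ and $\Dc_k$, so there is no real obstacle — the work has already been done in establishing Lemma \ref{SI}.
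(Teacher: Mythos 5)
Your proof is correct and takes essentially the same approach as the paper: you combine Lemma \ref{SI} with the inequality $\Dc_k \ge \deg(T)/\deg(W^{(k)})$ (coming from $W^{(k)}$ being a $k$-dimensional algebraic set containing all of $T$), the lower bound $\deg(W) > R_1^{(k)} = \varepsilon_1 \deg(W^{(k)})$, and the monotonicity $a_{k+1}^{(h)} \le a_{h+1}^{(h)}$. The only cosmetic difference is that the paper argues by contradiction — assuming some $W$ achieves $\deg(T_W)/\deg(W) \ge \Dc_k/2$ and deriving $\deg(T_W) > a_{h+1}^{(h)} \deg(T) \ge a_{k+1}^{(h)}\deg(T)$, contradicting Lemma \ref{SI} — whereas you argue directly; the ingredients and the arithmetic are identical.
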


\begin{proof}
Suppose the result fails. Then we can find some $W$ of dimension $k$ with $R_1^{(k)} < \deg(W) < R_2^{(k)}$ such that $\frac{\deg(T_W) }{\deg(W)} \ge \Dc_k/2$. In particular, we have that
\begin{equation}
\begin{aligned}
 \deg(T_W) &\ge \frac{1}{2} \deg(T) \frac{\deg(W)}{\deg(W^{(k)})} \\
 &> \frac{\varepsilon_1}{2}  \deg(T) \\
 &> a_{h+1}^{(h)} \deg(T).
 \end{aligned}
 \end{equation}
Since $a_{h+1}^{(h)} \ge a_{k+1}^{(h)}$, this contradicts Lemma \ref{SI}.
\end{proof}
  
  Given a large subset $\tilde{T}$ of $T$, we want to be able to find a polynomial $f$ of adequate degree such that $\tilde{T} \setminus \tilde{T}_f$ is a large subset of $T$ that is significantly less-concentrated in varieties of small degree. This will be done in the most straightforward way, by simply removing all subsets $\tilde{T}'$ of $\tilde{T}$ that lie inside varieties that have abnormally small degree with respect to $\deg(\tilde{T}')$. In this sense, this part of the method is quite similar to the pruning mechanisms employed in \cite{G5,GZ}. As we mentioned, in our case we also need to ensure that all these elements we are removing lie inside of $Z(f)$ for some $f$ of adequate degree. This will be accomplished using the arguments and lemmas that we have established so far in this section.
  
 \begin{lema}
 \label{T0}
Let $\tilde{T} \subseteq T$ with $\deg(\tilde{T}) \ge \deg(T)/2$. If $\varepsilon_2>0$ is sufficiently small with respect to $B^{(h)}$, there exists a polynomial $f$ with $\deg(f) < \deg_R(\tilde{T},W^{(h+1)})$ such that $\deg(\tilde{T}_f)< \sum_{k=h}^{n-1} a_{k+1}^{(h)} \deg(T)$ and $\Dc_k^{(1,R_2^{(k)})}(\tilde{T} \setminus \tilde{T}_f) < \Dc_k/2$ for every $h \le k \le n$.
  \end{lema}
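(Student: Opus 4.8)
The plan is to produce the desired polynomial $f$ by a greedy/iterative removal process, where at each stage we peel off from $\tilde T$ a subset that is abnormally concentrated on a low-degree algebraic set of some dimension $k$, and then verify (i) the total degree removed is small, (ii) the remaining set has the claimed concentration bound, and (iii) everything removed is captured by a single polynomial $f$ of the stated relative degree. First I would set up the iteration: start with $\tilde T_0 = \tilde T$, and at step $j$, if there is some $h \le k \le n$ and a $k$-dimensional algebraic set $W$ with $\deg(W) < R_2^{(k)}$ and $\deg((\tilde T_j)_W) \ge \tfrac12 \Dc_k \deg(W)$, then set $\tilde T_{j+1} = \tilde T_j \setminus (\tilde T_j)_W$; otherwise stop. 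Since $\deg(\tilde T)$ is finite and each step removes a nonempty subset, the process terminates, and the terminal set $\tilde T \setminus \tilde T_f$ satisfies $\Dc_k^{(1,R_2^{(k)})}(\tilde T \setminus \tilde T_f) < \Dc_k/2$ for every $h \le k \le n$ by construction; note the range $\deg(W) > R_1^{(k)}$ need not be imposed here because for $R_1^{(k)} < \deg(W) < R_2^{(k)}$ Lemma \ref{MR} already forbids such a $W$, so effectively only $\deg(W) \le R_1^{(k)}$ steps occur, which is exactly the regime handled by Lemma \ref{SI}.

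For the bound on $\deg(\tilde T_f)$, I would group the removal steps by the dimension $k$ they use. Each individual $W$ used at dimension $k$ has $\deg(W) < R_2^{(k)}$, so by Lemma \ref{SI} (applied to $T$, and using $\deg(\tilde T) \ge \deg(T)/2$ together with Lemma \ref{srl} so that the same conclusion transfers, or more directly observing $(\tilde T_j)_W \subseteq T_W$) we get $\deg((\tilde T_j)_W) < a_{k+1}^{(h)}\deg(T)$. The subtlety is that there may be many removal steps at the same dimension $k$, so a single application is not enough; here I would argue that the removed pieces at dimension $k$ are pairwise disjoint and their union, being the part of $\tilde T$ lying in the union of all these $W$'s, is itself contained in a $k$-dimensional algebraic set of degree at most the sum of the $\deg(W)$'s — but to control that one needs a stopping bound on the number of steps. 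The cleanest route is to note each removed piece has degree $\ge \tfrac12 \Dc_k$ (since $\deg(W) \ge 1$), so the number of steps at dimension $k$ is at most $2\deg(\tilde T)/\Dc_k$; combined with $\deg(W) < R_2^{(k)} = \varepsilon_2 \deg(W^{(k)})$ this is too lossy, so instead I would re-run the Lemma \ref{SI} argument on the accumulated union: the union of all $W$'s used at dimension $k$ is contained in $Z(f_k)$ for a single polynomial $f_k$ of controlled degree, and that union-variety again has degree $< R_2^{(k)}$ after possibly re-choosing $\varepsilon_2$, so Lemma \ref{SI} applies once to the whole accumulated removal at level $k$, yielding $\sum_{\text{level }k} \deg(\text{removed}) < a_{k+1}^{(h)} \deg(T)$ and hence $\deg(\tilde T_f) < \sum_{k=h}^{n-1} a_{k+1}^{(h)}\deg(T)$ upon summing over $k$.

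For the polynomial $f$ itself, I would take $f$ to be a product (or, via Lemma \ref{simplelinear}, a suitable linear combination) of the polynomials $f_k$, $h \le k \le n-1$, where $f_k$ vanishes on all elements of $\tilde T$ removed at dimension $k$ and is produced by Lemma \ref{BL} exactly as in the proof of Lemma \ref{SI}: the accumulated removal at level $k$ lies in a $k$-dimensional set $W$ of degree $< \varepsilon_2 \deg(W^{(k+1)})\deg(P_{n-k})$, and Lemma \ref{BL} (using Lemma \ref{q} to get $\delta_q(W_i^{(k+1)}) \sim_{B^{(h)}} \deg(P_q)$ and the estimates in (\ref{lines})) gives $\deg(f_k) \lesssim_{B^{(h)}} \varepsilon_2^{1/n} \deg(P_{n-k})$ with $f_k$ not vanishing on any $W_i^{(k+1)}$. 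Summing the degrees and using $\deg(P_{n-k}) \lesssim_{B^{(h)}} \deg(P_{n-h})$ together with Lemma \ref{DHI2}, we get $\deg(f) \lesssim_{B^{(h)}} \varepsilon_2^{1/n} \deg(W^{(h)})/\deg(W^{(h+1)}) < \deg_R(\tilde T, W^{(h+1)})$ once $\varepsilon_2$ is chosen small enough with respect to $B^{(h)}$; the fact that $f$ does not vanish on any component of $W^{(h+1)}$ (needed for the relative-degree comparison) follows from the corresponding property of each $f_k$ via Lemma \ref{simplelinear}. The main obstacle is the bookkeeping in the previous paragraph — ensuring the many-steps-at-one-dimension issue is handled by a single clean application of Lemma \ref{SI} and Lemma \ref{BL} to the accumulated union rather than step by step, which forces the mild re-choice of $\varepsilon_2$ and is the only place real care is needed; everything else is a direct assembly of Lemmas \ref{SI}, \ref{MR}, \ref{srl}, \ref{DHI2}, \ref{BL}, \ref{q} and \ref{simplelinear}.
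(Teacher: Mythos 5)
Your outline follows essentially the same route as the paper — greedy removal organized by dimension $k$, with the accumulated removal at each level covered by a single polynomial $f_k$ built via Lemma \ref{BL}, and the total polynomial $f=\prod_k f_k$ — so the overall architecture is right. However, there is a genuine gap at exactly the step you flag as the delicate one, and the fix you propose ("possibly re-choosing $\varepsilon_2$") does not close it.

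The issue is the claim that the accumulated union $Z^{(k)}=\bigcup_j Z^{(k)}_j$ of all $k$-dimensional sets used at level $k$ has degree $< R_2^{(k)}$. You correctly observe via Lemma \ref{MR} that each \emph{individual} $Z^{(k)}_j$ has degree $\le R_1^{(k)}$, but you have no a priori bound on how many such pieces accumulate, so their total degree could in principle blow past any fixed multiple of $R_1^{(k)}$; shrinking $\varepsilon_2$ only shrinks $R_2^{(k)}$, which makes the required inequality \emph{harder} to satisfy, not easier, and enlarging $\varepsilon_2$ is not allowed because Lemma \ref{SI} needs $\varepsilon_2$ small. The correct argument is a prefix-by-contradiction: if $\deg(Z^{(k)})>R_1^{(k)}$, pick the first $r'$ so that $\deg(Z^{(k)}_1\cup\cdots\cup Z^{(k)}_{r'})$ lands in $(R_1^{(k)},2R_1^{(k)}]\subset(R_1^{(k)},R_2^{(k)})$ (this is possible because each new piece has degree $\le R_1^{(k)}$), and note that by the greedy choice this prefix union has $\tilde T$-density $\ge\Dc_k/2$, contradicting Lemma \ref{MR}. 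This shows $\deg(Z^{(k)})\le R_1^{(k)}$ outright — a stronger conclusion than $<R_2^{(k)}$, and it is what makes the rest go through without any re-choice of parameters.

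A second, milder imprecision: you bound the \emph{removed} elements at level $k$ by applying Lemma \ref{SI} to the accumulated union, and then write "hence $\deg(\tilde T_f)<\sum a_{k+1}^{(h)}\deg(T)$". But $\tilde T_f=\bigcup_k\tilde T_{f_k}$, and $\tilde T_{f_k}$ can strictly contain the elements you removed, since $Z(f_k)\cap W^{(k+1)}$ may be larger than $Z^{(k)}$. You need to apply Lemma \ref{SI} to $A^{(k)}:=Z(f_k)\cap W^{(k+1)}$ itself, using the degree bound on $f_k$ coming from Lemma \ref{BL} (together with the relation from (\ref{lines})) to verify $\deg(A^{(k)})<R_2^{(k)}$; only then do you get $\deg(\tilde T_{f_k})=\deg(\tilde T_{A^{(k)}})<a_{k+1}^{(h)}\deg(T)$. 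Both gaps are readily fixable and the repair lands you exactly on the paper's argument, but as written the proposal asserts the two key inequalities rather than proving them.
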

 
 \begin{proof}
 The result is trivial if $h=n$, so we may assume $h<n$. Let $\tilde{T}^{(h-1)}:=\tilde{T}$. We will recursively construct subsets $\tilde{T}^{(h)} \supseteq \cdots \supseteq \tilde{T}^{(n-1)}$ of $\tilde{T}$ in the following way. Given $h \le k < n$, if $\Dc_k^{(1,R_2^{(k)})}(\tilde{T}^{(k-1)}) < \Dc_k/2$ we simply take $\tilde{T}^{(k)} := \tilde{T}^{(k-1)}$. Otherwise, we shall recursively construct a finite family of algebraic sets $Z_1^{(k)},\ldots,Z_r^{(k)}$ as follows. We let $Z_1^{(k)}$ be an irreducible algebraic set of dimension $k$ with $\deg(Z_1^{(k)}) \le R_2^{(k)}$ and $\deg(\tilde{T}_{Z_1^{(k)}}) \ge \frac{\Dc_k}{2} \deg(Z_1^{(k)})$, which we are assuming exists. Recursively, suppose we have constructed sets $Z_1^{(k)},\ldots,Z_j^{(k)}$. If $\Dc_k^{(1,R_2^{(k)})}(\tilde{T}^{(k-1)} \setminus \bigcup_{i=1}^j \tilde{T}_{Z_i^{(k)}}) < \Dc_k/2$ we halt the process. Otherwise, we let $Z_{j+1}^{(k)}$ be an irreducible algebraic set with $\deg(Z_{j+1}^{(k)}) \le R_2^{(k)}$ and such that the total degree of the elements of $\tilde{T}^{(k-1)} \setminus \bigcup_{i=1}^j \tilde{T}_{Z_i^{(k)}}$ contained in $Z_{j+1}^{(k)}$ is at least $\frac{\Dc_k}{2} \deg(Z_{j+1}^{(k)})$. 
 
 Let $\varepsilon_1$ be as in the statement of Lemma \ref{MR} and assume also that it is chosen sufficiently small with respect to $B^{(h)}$ and $\varepsilon_2$. By Lemma \ref{MR} it must be $\deg(Z_j^{(k)}) \le R_1^{(k)}$ for every $1 \le j \le r$. Write $Z^{(k)}=Z_1^{(k)} \cup \ldots \cup Z_r^{(k)}$ and assume $\deg(Z^{(k)}) > R_1^{(k)}$. Then, there exists some $r'$ with 
 $$\deg(Z_1^{(k)} \cup \ldots \cup Z_{r'-1}^{(k)}) \le R_1^{(k)} <  \deg(Z_1^{(k)} \cup \ldots \cup Z_{r'}^{(k)}) \le 2R_1^{(k)}<R_2^{(k)}.$$
 However, we know by construction that the total degree of elements of $\tilde{T}$ in $Z_{\ast}^{(k)} = Z_1^{(k)} \cup \ldots \cup Z_{r'}^{(k)}$ is at least $\Dc_k \deg(Z_{\ast}^{(k)})/2$ and this would contradict Lemma \ref{MR}. Therefore, we conclude that it must be $\deg(Z^{(k)}) \le R_1^{(k)}$. 
 
 The same argument as in Lemma \ref{SI} now gives us a polynomial $f_k$ of degree at most $\lesssim_{B^{(h)}} \varepsilon_1^{1/n} \deg(P_{n-k})$ that vanishes on $Z^{(k)}$ without vanishing on any component of $W^{(k+1)}$. In particular, we have that $A^{(k)}=Z(f_k) \cap W^{(k+1)}$ satisfies the hypothesis of Lemma \ref{SI}, provided $\varepsilon_1$ was chosen sufficiently small with respect to $\varepsilon_2$ and $B^{(h)}$, and therefore $\deg(\tilde{T}_{A^{(k)}}) < a_{k+1}^{(h)} \deg(T)$. 
 
 We let $\tilde{T}^{(k)} := \tilde{T}^{(k-1)} \setminus \tilde{T}_{A^{(k)}}$. Once we have constructed $\tilde{T}^{(h)}, \ldots,\tilde{T}^{(n-1)}$ in this way, we let $\tilde{T}_1 = \tilde{T}^{(n-1)}$ and write $\tilde{T}_0=\tilde{T} \setminus \tilde{T}_1$. It is clear from construction that $\deg(\tilde{T}_0) < \sum_{k=h}^{n-1} a_{k+1}^{(h)} \deg(T)$. Furthermore, the polynomial $f = \prod_{k=h}^n f_k$ satisfies $\tilde{T}_f=\tilde{T}_0$ and
  \begin{equation}
 \begin{aligned}
  \deg(f) &\lesssim_{B^{(h)}} \varepsilon_1^{1/n} \sum_{k=h}^n \deg(P_{n-k}) \\
  &\lesssim_{B^{(h)}} \varepsilon_1^{1/n} \deg(P_{n-h}) \\
  &< \deg_R(\tilde{T},W^{(h+1)}) ,
    \end{aligned}
  \end{equation}
  upon choosing $\varepsilon_1>0$ sufficiently small with respect to $B^{(h)}$, where we are using (\ref{lines}) and Lemma \ref{DHI2}. The remaining claim on $\tilde{T}_1=\tilde{T} \setminus \tilde{T}_f$ is also clear by construction.
 \end{proof}
 
 An immediate consequence of the last lemma is that small subsets of $\tilde{T} \setminus \tilde{T}_f$ are significantly less concentrated than $T$.
 
 \begin{coro}
 \label{DH2}
 Let the notation be as in Lemma \ref{T0} and let $T'$ be a subset of $\tilde{T} \setminus \tilde{T}_f$ with $\deg(T') < \frac{\varepsilon_2}{2} \deg(T)$. Then $\Dc_k(T') < \Dc_k/2$ for every $h \le k \le n$.
 \end{coro}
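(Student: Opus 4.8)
The plan is to bound, for each $h \le k \le n$ and each $k$-dimensional algebraic set $W$, the ratio $\deg(T'_W)/\deg(W)$ by $\Dc_k/2$, and then take the maximum over $W$. I would split into two regimes according to the size of $\deg(W)$ relative to $R_2^{(k)} = \varepsilon_2\deg(W^{(k)})$: the regime $\deg(W) < R_2^{(k)}$ is handled directly by the conclusion of Lemma \ref{T0}, while the regime $\deg(W) \ge R_2^{(k)}$ is handled by the hypothesis $\deg(T') < \tfrac{\varepsilon_2}{2}\deg(T)$, which is exactly there to absorb the large-degree varieties that Lemma \ref{T0} says nothing about.

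For the first regime, since $T' \subseteq \tilde{T}\setminus\tilde{T}_f$ we have $\deg(T'_W) \le \deg((\tilde{T}\setminus\tilde{T}_f)_W) \le \Dc_k^{(1,R_2^{(k)})}(\tilde{T}\setminus\tilde{T}_f)\deg(W)$ by the definition of $\Dc_k^{(1,R_2^{(k)})}$, and Lemma \ref{T0} gives $\Dc_k^{(1,R_2^{(k)})}(\tilde{T}\setminus\tilde{T}_f) < \Dc_k/2$. For the second regime, the one auxiliary fact I need is $\deg(T) \le \deg(W^{(k)})\Dc_k$: by Definition \ref{reduction} the set $W^{(k)}$ is $k$-dimensional and contains $T$, so $\Dc_k \ge \deg(T_{W^{(k)}})/\deg(W^{(k)}) = \deg(T)/\deg(W^{(k)})$ (and if one insists $\Dc_k$ ranges only over irreducible varieties, one passes to the component of $W^{(k)}$ maximising $\deg(T_\bullet)/\deg(\bullet)$, for which the same lower bound holds by the mediant inequality). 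Then, using $\deg(T'_W) \le \deg(T') < \tfrac{\varepsilon_2}{2}\deg(T)$ and $R_2^{(k)} = \varepsilon_2\deg(W^{(k)})$,
$$\deg(T'_W) < \tfrac{\varepsilon_2}{2}\deg(T) \le \tfrac{\varepsilon_2}{2}\deg(W^{(k)})\Dc_k = \tfrac{1}{2} R_2^{(k)}\Dc_k \le \tfrac{1}{2}\deg(W)\Dc_k,$$
where the last step uses $\deg(W) \ge R_2^{(k)}$. Combining the two regimes yields $\deg(T'_W) < \tfrac{\Dc_k}{2}\deg(W)$ for every $k$-dimensional $W$, hence $\Dc_k(T') < \Dc_k/2$.

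I do not expect a genuine obstacle here: the argument uses only the clean relations $\deg(T') < \tfrac{\varepsilon_2}{2}\deg(T)$, $R_2^{(k)} = \varepsilon_2\deg(W^{(k)})$, $T \subseteq W^{(k)}$, and the output of Lemma \ref{T0}, and none of the smallness conventions on $\varepsilon_1,\varepsilon_2$ or the hypothesis $\varepsilon_2 > 2\varepsilon_1$ enter beyond what is already packaged in those statements. The only point requiring a moment of care is the bookkeeping between the truncated quantity $\Dc_k^{(1,R_2^{(k)})}$ (which sees only $k$-varieties of degree below $R_2^{(k)}$) and the full $\Dc_k(T')$; recognising that the hypothesis on $\deg(T')$ is precisely what closes this gap is the crux, and once the split at $R_2^{(k)}$ is made the estimates are immediate.
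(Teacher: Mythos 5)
Your argument is correct and essentially identical to the paper's own proof: both split the $k$-dimensional $W$ at the threshold $R_2^{(k)}=\varepsilon_2\deg(W^{(k)})$, invoke Lemma \ref{T0} for small-degree $W$, and for large-degree $W$ combine $\deg(T'_W)\le\deg(T')<\tfrac{\varepsilon_2}{2}\deg(T)$ with $\deg(T)\le\deg(W^{(k)})\Dc_k$ and $\deg(W)\ge\varepsilon_2\deg(W^{(k)})$. The only cosmetic difference is that you write the chain multiplicatively while the paper writes it as a ratio; the aside about irreducibility and the mediant inequality is reasonable but unnecessary, since $\Dc_k$ as defined in the paper ranges over all $k$-dimensional algebraic sets, which includes $W^{(k)}$.
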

 
 \begin{proof}
 Since $T'$ is a subset of $\tilde{T} \setminus \tilde{T}_f$ and $\Dc_k^{(1,R_2^{(k)})}(\tilde{T} \setminus \tilde{T}_f) < \Dc_k/2$ for every $h \le k \le n$ by Lemma \ref{T0}, we only need to check what happens with those $k$-dimensional algebraic sets $W$ with $\deg(W) \ge R_2^{(k)}=\varepsilon_2 \deg(W^{(k)})$. But then we have
 $$ \frac{\deg(T'_W)}{\deg(W)} \le \frac{\deg(T')}{\deg(W)} < \frac{\varepsilon_2}{2} \frac{\deg(T)}{\deg(W)} \le \frac{1}{2} \frac{\deg(T)}{\deg(W^{(k)})} \le \Dc_k/2,$$
 as desired.
 \end{proof}
  
  \subsection{Ordering of the relative degrees}
  \label{33}
  
  We have the following analogue of Lemma \ref{SI} for $(h-1)$-dimensional varieties.
  
    \begin{lema}
    \label{loweb}
  Let $Y$ be an $(h-1)$-dimensional set containing $a_{h+1}^{(h)} \deg(T)$ elements of $T$. Then $\deg(Y) \gtrsim_{B^{(h)}} \deg(W^{(h)}) \deg(P_{n-h})$.
  \end{lema}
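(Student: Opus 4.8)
The plan is to argue by contraposition, in close parallel with Lemma~\ref{SI}. Fix a constant $c \sim_{B^{(h)}} 1$, to be chosen sufficiently small, and suppose towards a contradiction that $Y$ is an $(h-1)$-dimensional algebraic set with $\deg(Y) < c\,\deg(W^{(h)})\deg(P_{n-h})$ and $\deg(T_Y) \ge a_{h+1}^{(h)}\deg(T)$. I will produce from $Y$ a polynomial $f$ of small degree that exhibits an $a_{h+1}^{(h)}$-good partition of $T$ over $W^{(h+1)}$, contradicting the fact that $T$ reduces to level $h$ in the sense of Definition~\ref{reduction}.

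First I record the degree relations needed. By Lemma~\ref{q} and its proof, $\delta_q(W_i^{(h)}) \sim_{B^{(h)}} \deg(P_q)$ for all $i$ and $1 \le q \le n-h$, so the components of $W^{(h)}$ satisfy the comparability hypothesis of Lemma~\ref{BL}; moreover $\deg(W^{(h)}) \sim_{B^{(h)}} \prod_{q=1}^{n-h}\deg(P_q) \sim_{B^{(h)}} \deg(W^{(h+1)})\deg(P_{n-h})$ and $\delta(W^{(h)}) \sim_{B^{(h)}} \deg(P_{n-h})$. Applying Lemma~\ref{BL} with $l=h-1$, $m=h$, $X=Y$, $W=W^{(h)}$, $A \sim_{B^{(h)}} 1$ and $\epsilon \sim_{B^{(h)}} \deg(Y)/\bigl(\deg(W^{(h)})\deg(P_{n-h})\bigr)$, which is $<c<1$ for $c$ small, yields a polynomial $f$ with
$$\deg(f) \lesssim_{B^{(h)}} \left( \frac{\deg(Y)}{\deg(W^{(h)})\deg(P_{n-h})} \right)^{1/(n-h+1)}\!\!\deg(P_{n-h}) \lesssim_{B^{(h)}} c^{1/(n-h+1)}\deg(P_{n-h}),$$
vanishing on $Y$, hence on $T_Y$, without vanishing identically on any component $W_i^{(h)}$ of $W^{(h)}$.

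The step I expect to be the main obstacle is showing that $f$ does not vanish on all of $T$, i.e. $\deg(T_f) < \deg(T)$: the natural set $Z(f) \cap W^{(h)}$ is only $(h-1)$-dimensional, which is below the level at which the minimality estimate~(\ref{hmin}) is available, so one must pass up to $W^{(h+1)}$. Each $W_i^{(h)}$ is irreducible and contained in $W^{(h)} \subseteq W^{(h+1)}$, hence lies in some component $W_{j(i)}^{(h+1)}$ of $W^{(h+1)}$; since $f$ is not identically zero on $W_i^{(h)}$ it is not identically zero on $W_{j(i)}^{(h+1)}$, so $Z(f) \cap W_{j(i)}^{(h+1)}$ is either empty or pure of dimension $h$. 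Let $V'$ be the union of the $h$-dimensional irreducible components of $Z(f) \cap W^{(h+1)}$; this is a pure $h$-dimensional algebraic set, and $\deg(V') \le \deg(f)\deg(W^{(h+1)})$ by Bezout. If $f$ vanished on all of $T$, then every $t \in T$ would lie in some $W_i^{(h)} \subseteq W_{j(i)}^{(h+1)}$ as well as in $Z(f)$, hence in $Z(f)\cap W_{j(i)}^{(h+1)} \subseteq V'$, so $T \subseteq V'$; but then
$$\deg(V') \;\le\; \deg(f)\deg(W^{(h+1)}) \;\lesssim_{B^{(h)}}\; c^{1/(n-h+1)}\deg(P_{n-h})\deg(W^{(h+1)}) \;\sim_{B^{(h)}}\; c^{1/(n-h+1)}\deg(W^{(h)}),$$
which is $< b_0^{(h)}\deg(W^{(h)})$ once $c$ is small with respect to $B^{(h)}$ and $b_0^{(h)}$, contradicting~(\ref{hmin}). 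Hence $\deg(T_f) < \deg(T)$.

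Finally I would verify that $\deg(f) \le \deg_R(T, W^{(h+1)})$. By Lemma~\ref{DHI} together with the second estimate of~(\ref{lines}), $\deg_R(T, W^{(h+1)}) \gtrsim_{B^{(h)}} \deg(W^{(h)})/\deg(W^{(h+1)}) \sim_{B^{(h)}} \deg(P_{n-h})$, which exceeds $\deg(f) \lesssim_{B^{(h)}} c^{1/(n-h+1)}\deg(P_{n-h})$ for $c$ small. Since $f$ vanishes on $T_Y$ we have $\deg(T_f) \ge \deg(T_Y) \ge a_{h+1}^{(h)}\deg(T)$, so together with $\deg(T_f) < \deg(T)$ and the degree bound just obtained, $f$ is an $a_{h+1}^{(h)}$-good partition of $T$ over $W^{(h+1)}$. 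As $T$ reduces to level $h$, no such partition exists, a contradiction; therefore $\deg(Y) \gtrsim_{B^{(h)}} \deg(W^{(h)})\deg(P_{n-h})$.
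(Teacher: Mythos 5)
Your proof is correct and follows the same overall strategy as the paper: suppose $\deg(Y)$ is small, use Lemma~\ref{BL} to produce a small-degree polynomial $f$ vanishing on $Y$, bound $\deg(f)$ by $\deg_R(T,W^{(h+1)})$ via Lemma~\ref{DHI} and (\ref{lines}), and conclude that $f$ yields an $a_{h+1}^{(h)}$-good partition of $T$ at level $h+1$, contradicting Definition~\ref{reduction}. The one place you diverge is in applying Lemma~\ref{BL} with target $W=W^{(h)}$ ($m=h$) rather than $W=W^{(h+1)}$. This is what creates the ``main obstacle'' you flag: since the resulting $f$ is only guaranteed not to vanish on components of $W^{(h)}$, you must pass up to $W^{(h+1)}$, extract the $h$-dimensional part $V'$ of $Z(f)\cap W^{(h+1)}$, and invoke the minimality estimate (\ref{hmin}) to rule out $T\subseteq V'$. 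The paper instead applies Lemma~\ref{BL} directly with $W=W^{(h+1)}$, $l=h-1$, $m=h+1$, noting by Bezout that $\deg(Y)\le \epsilon\,\deg(W^{(h+1)})\deg(P_{n-h})^2$; the resulting $f$ of degree $\lesssim_{B^{(h)}}\epsilon^{1/n}\deg(P_{n-h})$ does not vanish identically on any component of $W^{(h+1)}$, so the inequality $\deg(f)<\deg_R(T,W^{(h+1)})$ already forces $f$ not to vanish on all of $T$ by the very definition of relative degree, with no appeal to (\ref{hmin}). Both routes are valid and arrive at the same contradiction; the paper's choice of target makes the step you anticipated as the obstacle disappear, at the cost of a marginally more careful Bezout step up front.
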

 
 \begin{proof}
 Assume $\deg(Y)  \le \epsilon \deg(W^{(h)}) \deg(P_{n-h})$ for some small $\epsilon > 0$. It follows from Bezout's theorem that
 $$ \deg(Y) \le \epsilon \deg(W^{(h+1)}) \deg(P_{n-h})^2,$$
 and we can therefore apply Lemma \ref{BL}, (\ref{lines}) and Lemma $\ref{DHI}$ to find a polynomial $f$ of degree
 $$\lesssim_{n} \epsilon^{1/n} \deg(P_{n-h}) \lesssim_{B^{(h)}} \epsilon^{1/n} \frac{\deg(W^{(h)})}{\deg(W^{(h+1)})} \lesssim_{B^{(h)}} \epsilon^{1/n} \deg_R(T,W^{(h+1)}),$$
  vanishing on $Y$ without vanishing identically on any component of $W^{(h+1)}$. If $\epsilon>0$ is sufficiently small with respect to $B^{(h)}$ it follows from the definition of $\deg_R(T,W^{(h+1)})$ that $f$ cannot vanish on all of $T$. We conclude that $f$ produces an $a_{h+1}^{(h)}$-good partition of $T$ at level $h+1$, which contradicts the fact that $T$ reduces to level $h$.
 \end{proof}

This last lemma allows us to establish the correct ordering for all the relative degrees we shall need in the proof.

\begin{coro}
\label{or}
For every subset $\tilde{T} \subseteq T$ with $\deg(\tilde{T}) \ge \deg(T)/2$ and every $h \le k < n$, we have
$$ \deg_R(\tilde{T},W^{(k+1)}) \lesssim_{B^{(h)}} \deg_R(\tilde{T},W^{(k)}).$$
\end{coro}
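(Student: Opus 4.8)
The plan is to sandwich both relative degrees against $\deg(P_{n-k})$: I will show $\deg_R(\tilde{T},W^{(k+1)}) \le \deg(P_{n-k})$ and, separately, $\deg(P_{n-k}) \lesssim_{B^{(h)}} \deg_R(\tilde{T},W^{(k)})$. Combining these two estimates gives the corollary.

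For the first (upper) bound I will observe that $P_{n-k}$ vanishes on $\tilde{T}$ but on no irreducible component of $W^{(k+1)}$, so that by definition $\deg_R(\tilde{T},W^{(k+1)}) \le \deg(P_{n-k})$. Indeed, since $\tilde{T} \subseteq T \subseteq W^{(k)} \subseteq Z(P_1,\ldots,P_{n-k})$, the polynomial $P_{n-k}$ vanishes on every element of $\tilde{T}$. And if $P_{n-k}$ vanished identically on some irreducible component $W_j^{(k+1)}$ of $W^{(k+1)}$, then, using also $W_j^{(k+1)} \subseteq W^{(k+1)} \subseteq Z(P_1,\ldots,P_{n-k-1})$, we would get $W_j^{(k+1)} \subseteq Z(P_1,\ldots,P_{n-k}) = W^{(k)} \cup Y^{(k)}$; as $W_j^{(k+1)}$ is irreducible of dimension $k+1 > k = \dim W^{(k)}$, it would have to lie inside $Y^{(k)}$, whence the element of $T$ contained in $W_j^{(k+1)}$ (recall $T_{W_j^{(k+1)}} \neq \emptyset$) would lie inside $Y^{(k)}$ --- contradicting that $Y^{(k)}$ contains no element of $T$.

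For the lower bound I will distinguish two cases. If $k > h$, then $W^{(k-1)}$ is part of the reduction data and Lemma \ref{DHI2} (instantiated at level $k-1$) gives $\deg_R(\tilde{T},W^{(k)}) \gtrsim_{B^{(h)}} \deg(W^{(k-1)})/\deg(W^{(k)})$; combining the estimates in (\ref{lines}) (the bound $r_k \le b_1^{(h)}$, the comparability of the degrees of the components of $W^{(k)}$, the inequality $\deg(W_i^{(k-1)}) \ge b_2^{(h)} \deg(W_j^{(k)}) \deg(P_{n-k+1})$, and $\deg(P_{n-k+1}) \ge b_3^{(h)} \deg(P_{n-k})$) with Bezout's inequality then yields $\deg(W^{(k-1)})/\deg(W^{(k)}) \gtrsim_{B^{(h)}} \deg(P_{n-k})$, as desired. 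If instead $k = h$, there is no $W^{(h-1)}$, and this is exactly where Lemma \ref{loweb} is used. Let $g$ be a polynomial of degree $\deg_R(\tilde{T},W^{(h)})$ vanishing on $\tilde{T}$ but on no irreducible component of $W^{(h)}$, and put $X = Z(g) \cap W^{(h)}$. By Krull's principal ideal theorem, $X$ is of pure dimension $h-1$ (discarding any components of $W^{(h)}$ that $Z(g)$ misses entirely); moreover $X \supseteq \tilde{T}$, so $X$ contains elements of $T$ of total degree at least $\deg(T)/2 \ge a_{h+1}^{(h)}\deg(T)$. Lemma \ref{loweb} therefore gives $\deg(X) \gtrsim_{B^{(h)}} \deg(W^{(h)})\deg(P_{n-h})$, while Bezout's inequality gives $\deg(X) \le \deg_R(\tilde{T},W^{(h)})\,\deg(W^{(h)})$; comparing these two yields $\deg_R(\tilde{T},W^{(h)}) \gtrsim_{B^{(h)}} \deg(P_{n-h})$.

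The step carrying the real content is the case $k = h$ of the lower bound: with $W^{(h-1)}$ unavailable, Lemma \ref{loweb} plays the role of the ``next level down'' --- which is precisely what it was designed for --- and the only additional observation is that cutting $\tilde{T}$ out of $W^{(h)}$ by the minimal polynomial $g$ produces a genuine $(h-1)$-dimensional set (by Krull) whose degree exceeds $\deg_R(\tilde{T},W^{(h)})$ by only the factor $\deg(W^{(h)})$. The upper bound, although elementary, is the other point requiring a little care: it is the structural fact that $Y^{(k)}$ carries no element of $T$, rather than any degree estimate, that rules out $P_{n-k}$ collapsing onto a component of $W^{(k+1)}$. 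Everything else is bookkeeping with the parameters in (\ref{lines}).
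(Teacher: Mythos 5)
Your proof is correct and is essentially the argument the paper gives: upper-bound $\deg_R(\tilde{T},W^{(k+1)})$ by $\deg(P_{n-k})$, lower-bound $\deg_R(\tilde{T},W^{(k)})$ by the same quantity, handling $k>h$ via Lemma \ref{DHI2} and (\ref{lines}) and $k=h$ via Lemma \ref{loweb} with Bezout. You simply spell out details the paper leaves implicit (why $P_{n-k}$ cannot vanish on a component of $W^{(k+1)}$, and the explicit construction $X=Z(g)\cap W^{(h)}$ in the base case).
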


\begin{proof}
Let $h \le k < n$. From Definition \ref{reduction} it is clear that $P_{n-k}$ vanishes on $\tilde{T}$ without vanishing on any irreducible component of $W^{(k+1)}$, so in particular $\deg_R(\tilde{T},W^{(k+1)}) \le \deg(P_{n-k})$.
On the other hand, it follows from (\ref{lines}) and Lemma \ref{DHI2} that
$$ \deg_R(\tilde{T},W^{(k+1)}) \gtrsim_{B^{(h)}}  \frac{\deg(W^{(k)})}{\deg(W^{(k+1)})} \gtrsim_{B^{(h)}} \deg(P_{n-k}).$$
Therefore $\deg_R(\tilde{T},W^{(k+1)}) \sim_{B^{(h)}} \deg(P_{n-k})$. The result now follows from (\ref{lines}) in the range $h < k < n$ and from Lemma \ref{loweb} and Bezout's theorem when $k=h$.
\end{proof}

Notice that this shows that the bound on the degree of the polynomial obtained in Lemma \ref{T0} is consistent with the discussion in \S \ref{OM}.
 
 \section{Relative partitions}
 \label{4}
 
 Throughout this section we will fix a choice of $T$ reducing strongly to some level $l < h \le n$ and use the notations of Definition \ref{reduction} and Definition \ref{SR} to refer to the corresponding objects associated with $T$. In particular, we shall say we have a good partition at level $k+1$ to mean we have a good partition over $W^{(k+1)}$.

 To prove Theorem \ref{I0} we will need to obtain partitions of subsets $L$ of $T$ that are good with respect to a fixed subset $\tilde{T}$ of $T$. After setting up this context in \S \ref{41}, we will show in \S \ref{42} that such a partition can always be guaranteed if we can cover $L$ with a few small $(h-1)$-dimensional varieties. We will then use this in \S \ref{43} to show that a set not admitting a good partition reduces strongly to level $h-1$ after possibly eliminating some elements lying in the zero set of a polynomial of low degree. Finally, in \S \ref{44} we show how to construct an adequate partition satisfying a weak form of the dichotomy discussed in \S \ref{OM}. 
  
  \subsection{Working over a subset}
  \label{41}
  
 From now on, we fix some $\tilde{T} \subseteq T$ with $\deg(\tilde{T}) \ge \deg(T)/2$, so in particular $\tilde{T}$ satisfies the conclusions of Lemma \ref{srl}. Given any subset $L \subseteq \tilde{T}$, we shall write $Z_L$ for an $(h-1)$-dimensional algebraic set containing $L$ of the smallest possible degree. We will build partitions of $L$ that provide convenient partitions for $\tilde{T}$. Because of this, will find the following definition useful.
 
  \begin{defi}
 Given $h \le k \le n$, we say $L$ admits a $\tau$-good partition at level $k$ with respect to $\tilde{T}$ if there exists a polynomial $f$ of degree at most $\deg_R(\tilde{T},W^{(k)})$ such that 
 $$\tau \deg(L) < \deg(L_{f}) < \deg(L).$$
 \end{defi}
 
 We emphasise that the difference is that in this definition we require a bound for $\deg(f)$ of the form $\deg_R(\tilde{T},W^{(k)})$ instead of depending on the relative degree of $L$ itself.
 
  Notice that if $\deg(L) \ge a_{h+1}^{(h)} \deg(T)$ we have the lower bound for $\deg(Z_L)$ given by Lemma \ref{loweb}. On the other hand, the following lemma is clear from Definition \ref{RD} and Bezout's theorem.
 
 \begin{lema}
 \label{ZL}
  For any subset $L \subseteq \tilde{T}$, we have
 $$ \frac{\deg(Z_L)}{\deg(W^{(h)})}  \le \deg_R(\tilde{T},W^{(h)}).$$
 \end{lema}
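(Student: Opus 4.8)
The plan is to exhibit, for the given $L\subseteq\tilde{T}$, an $(h-1)$-dimensional algebraic set containing $L$ of degree at most $\deg(W^{(h)})\deg_R(\tilde{T},W^{(h)})$; since $Z_L$ is by definition an $(h-1)$-dimensional algebraic set containing $L$ of smallest degree, the asserted inequality then follows immediately. Let $f$ be a polynomial attaining $\deg_R(\tilde{T},W^{(h)})$ as in Definition \ref{RD} (the claim is vacuous if no such $f$ exists): so $\deg(f)=\deg_R(\tilde{T},W^{(h)})$, $f$ vanishes on $\tilde{T}$, and $f$ vanishes identically on no irreducible component $W_i^{(h)}$ of $W^{(h)}$. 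I take as my candidate set $Z':=Z(f)\cap W^{(h)}$, keeping only those components $W_i^{(h)}$ that $Z(f)$ actually meets.

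Verifying that $Z'$ does the job reduces to three routine points. \emph{Containment:} any $V\in L$ lies in $\tilde{T}$, so $f$ vanishes on $V$, i.e.\ $V\subseteq Z(f)$; and $V$ is irreducible and lies in $T\subseteq W^{(h)}$, hence in some $W_i^{(h)}$, so $V\subseteq W_i^{(h)}\cap Z(f)\subseteq Z'$. \emph{Pure dimension:} for each $i$, since $f$ does not vanish on the irreducible $h$-dimensional variety $W_i^{(h)}$, the intersection $W_i^{(h)}\cap Z(f)$ is empty or purely $(h-1)$-dimensional (a hypersurface section of an irreducible variety has pure codimension one), so $Z'$ --- a finite union of the nonempty such intersections --- is a genuine $(h-1)$-dimensional algebraic set, and therefore a legitimate competitor for $Z_L$. \emph{Degree:} Bezout's inequality (Lemma \ref{Bezout}), applied to each $W_i^{(h)}$ with the single polynomial $f$ and summed over $i$, gives $\deg(Z')\le\sum_i\deg(W_i^{(h)})\deg(f)=\deg(W^{(h)})\deg(f)$. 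Hence $\deg(Z_L)\le\deg(Z')\le\deg(W^{(h)})\deg_R(\tilde{T},W^{(h)})$, and dividing by $\deg(W^{(h)})$ is the claim.

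I do not expect a real obstacle: as the text notes, this is just the definition of the relative degree combined with Bezout's inequality. The only point deserving a moment's care is that the convention of \S\ref{2} forces $Z_L$ to be \emph{purely} $(h-1)$-dimensional, which is why I isolate the purity of hypersurface sections above; the degenerate case $L=\emptyset$ is handled by taking $Z_L=\emptyset$, when the inequality reads $0\le\deg_R(\tilde{T},W^{(h)})$.
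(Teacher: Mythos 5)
Your proof is correct and is precisely the argument the paper alludes to when it calls the lemma ``clear from Definition \ref{RD} and Bezout's theorem'': take the minimizing polynomial $f$ from Definition \ref{RD}, observe that $Z(f)\cap W^{(h)}$ is a purely $(h-1)$-dimensional set containing $L$, and bound its degree by Lemma \ref{Bezout}. The attention you pay to pure dimensionality and the degenerate case $L=\emptyset$ is careful but does not change the substance.
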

 
  \subsection{Obtaining a good partition}
  \label{42}
 
Let $L$ be a subset of $\tilde{T}$ whose degree is not too small with respect to $\deg(\tilde{T})$. We are going to prove that we can find a good partition of $L$ at level $h$ with respect to $\tilde{T}$ as long as we can cover a large part of it with a few small $(h-1)$-dimensional varieties. This will later be used to obtain some structure on $L$ when we know such a partition fails to exist.
 
 \begin{lema}
  \label{SV}
 Let $L \subseteq \tilde{T}$ be a subset of $\tilde{T}$ with $\deg(L) \ge a_{h+1}^{(h)} \deg(T)$. Let $U_1,\ldots,U_s$ be $(h-1)$-dimensional irreducible algebraic varieties and write $U = \bigcup_i U_i$. Suppose $\deg(U) \le C \deg(Z_L)$, $\deg(L_U) \ge \varepsilon_3 \deg(L)$ for some absolute constants $C,\varepsilon_3 > 0$ and that $\deg(U_i) \le \varepsilon_4 \deg(Z_L)$ for every $1 \le i \le s$ and for some  absolute constant $0 < \varepsilon_4 < C$ that is sufficiently small with respect to $B^{(h)}$. Then $L$ admits an $\varepsilon_3 \varepsilon_4 C^{-1}$-good partition at level $h$ with respect to $\tilde{T}$.
  \end{lema}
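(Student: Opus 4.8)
The plan is to build the required good partition by producing a single polynomial $f$ that vanishes on a carefully chosen sub-union of the $U_i$. First I would pass to a dense sub-union: assuming as we may that the $U_i$ are distinct (so $\deg(U)=\sum_i\deg(U_i)\le C\deg(Z_L)$), assign to each $t\in L_U$ the least index $i$ with $t\subseteq U_i$ and let $L^{(i)}$ be the set of $t$ so assigned, so that the $L^{(i)}$ partition $L_U$ and $\sum_i\deg(L^{(i)})=\deg(L_U)\ge\varepsilon_3\deg(L)$. Reordering the $U_i$ so that the densities $\deg(L^{(i)})/\deg(U_i)$ are non-increasing and taking the shortest prefix $U':=U_1\cup\cdots\cup U_{s'}$ with $\deg(U')\ge\varepsilon_4\deg(Z_L)$ (or $U':=U$ if $\deg(U)<\varepsilon_4\deg(Z_L)$), the bound $\deg(U_i)\le\varepsilon_4\deg(Z_L)$ gives $\deg(U')<2\varepsilon_4\deg(Z_L)$, while the elementary fact that a prefix of a density-sorted list has density at least the global density yields $\deg(L_{U'})\ge\sum_{i\le s'}\deg(L^{(i)})\ge\frac{\deg(L_U)}{\deg(U)}\deg(U')\ge\frac{\varepsilon_3\varepsilon_4}{C}\deg(L)$ (the degenerate case $U'=U$ being handled directly using $\varepsilon_4<C$).

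Next I would produce $f$. Since $\deg(L)\ge a_{h+1}^{(h)}\deg(T)$ and $L\subseteq Z_L$, Lemma \ref{loweb} gives $\deg(Z_L)\gtrsim_{B^{(h)}}\deg(W^{(h)})\deg(P_{n-h})$; as $\deg(P_{n-h})\sim_{B^{(h)}}\delta(W^{(h)})$ by \eqref{lines} and Lemma \ref{q}, writing $D:=\deg(Z_L)/\deg(W^{(h)})$ this says $D\gtrsim_{B^{(h)}}\delta(W^{(h)})$. Now $U'$ is an $(h-1)$-dimensional algebraic set with $\deg(U')<2\varepsilon_4 D\,\deg(W^{(h)})$, and the components of $W^{(h)}$ have comparable partial degrees by Lemma \ref{q}, so Lemma \ref{BL} applied to $X=U'$ and $W=W^{(h)}$ (here $m-l=1$), with the free parameters chosen so that $A\,\delta(W^{(h)})\sim\max\{\delta(W^{(h)}),\varepsilon_4 D\}$ — a choice for which $A\ge1$ precisely because $D\gtrsim_{B^{(h)}}\delta(W^{(h)})$ — produces a polynomial $f$ vanishing on $U'$, not vanishing on any component of $W^{(h)}$, and of degree $\lesssim_{B^{(h)}}\varepsilon_4^{1/(n-h+1)}D$; in particular $\deg(f)<D$ once $\varepsilon_4$ is chosen small enough with respect to $B^{(h)}$. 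By Lemma \ref{ZL} we have $D\le\deg_R(\tilde T,W^{(h)})$, so $\deg(f)<\deg_R(\tilde T,W^{(h)})$, as the definition of a good partition at level $h$ with respect to $\tilde T$ requires.

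Finally I would verify the two degree inequalities for $f$. Since $f$ vanishes on $U'$ it vanishes on every element of $L$ inside $U'$, hence $\deg(L_f)\ge\deg(L_{U'})\ge\varepsilon_3\varepsilon_4 C^{-1}\deg(L)$. On the other hand, because $f$ does not vanish on any component of $W^{(h)}$, the set $Z(f)\cap W^{(h)}$ is pure of dimension $h-1$ and, by Bezout, has degree at most $\deg(W^{(h)})\deg(f)<\deg(W^{(h)})D=\deg(Z_L)$; were $f$ to vanish on all of $L$ this would exhibit an $(h-1)$-dimensional algebraic set containing $L$ of degree strictly smaller than $\deg(Z_L)$, contradicting the choice of $Z_L$, so $\deg(L_f)<\deg(L)$. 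Thus $f$ witnesses the desired $\varepsilon_3\varepsilon_4 C^{-1}$-good partition of $L$ at level $h$ with respect to $\tilde T$. The one genuinely delicate point is the degree estimate in the middle step: one has to track the relations between $\deg(Z_L)$, $\deg(W^{(h)})$, $\delta(W^{(h)})$ and $\deg(P_{n-h})$ carefully enough to feed Lemma \ref{BL} an admissible value of $A$, and it is exactly the lower bound $\deg(Z_L)\gtrsim_{B^{(h)}}\deg(W^{(h)})\deg(P_{n-h})$ coming from Lemma \ref{loweb} (which is why the hypothesis $\deg(L)\ge a_{h+1}^{(h)}\deg(T)$ is needed) that makes this work; the rest is routine.
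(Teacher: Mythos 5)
Your proof is correct and follows essentially the same approach as the paper: construct a density-sorted partition of $L_U$ subordinate to the $U_i$, take a prefix $U'$ of degree $\approx \varepsilon_4 \deg(Z_L)$ capturing a $\gtrsim \varepsilon_3 \varepsilon_4/C$ fraction of $\deg(L)$, then use Lemma \ref{loweb} together with Lemma \ref{BL} to produce a polynomial of degree $\lesssim_{B^{(h)}} \varepsilon_4^{1/(n-h+1)} \deg(Z_L)/\deg(W^{(h)})$ vanishing on $U'$ but not on any $W_i^{(h)}$, and finish via the minimality of $Z_L$ (Bezout) and Lemma \ref{ZL}. The only cosmetic difference is that you assign elements to the least index and then sort, whereas the paper makes a recursive greedy choice (re-maximizing residual density at each step), and your bookkeeping of the parameters $(\epsilon, A)$ in the application of Lemma \ref{BL} is stated slightly imprecisely (one should take $A\delta(W^{(h)}) \sim_{B^{(h)}} \deg(Z_L)/\deg(W^{(h)})$ and $\epsilon \sim \varepsilon_4$, rather than $A\delta(W^{(h)}) \sim \max\{\delta(W^{(h)}),\varepsilon_4 D\}$, to keep $\epsilon < 1$ in all regimes), but the essential point — that Lemma \ref{loweb} guarantees $\deg(Z_L)/\deg(W^{(h)}) \gtrsim_{B^{(h)}} \delta(W^{(h)})$ so the parameters are admissible — is exactly right.
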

  
  \begin{proof}
 Let $U_1'$ be the irreducible component of $U$ that maximises the expression $\frac{\deg(L_{U_1'})}{\deg(U_1')}$ and write $L^{(1)}=L_{U_1'}$. Recursively, if we have constructed $U_1',\ldots,U_r'$ and $L^{(1)},\ldots,L^{(r)}$, we let $U_{r+1}'$ be component of  $U$ that maximises the expression 
 $$\frac{\deg((L \setminus \bigcup_{i=1}^r L^{(i)})_{U_{r+1}'})}{\deg(U_{r+1}')},$$
 among all remaining components of $U$ different from $U_1',\ldots,U_r'$. We then write $L^{(r+1)}$ for those elements of $L$ inside of $U_{r+1}'$ that do not lie in $U_j'$ for any $j < r+1$ (so $L^{(r+1)}=(L \setminus \bigcup_{i=1}^r L^{(i)})_{U_{r+1}'}$). After reordering the original $U_1,\ldots,U_s$ if necessary, to alleviate notation we may simply assume $U_i=U_i'$ for every $1 \le i \le s$. Notice however that this new order guarantees that
 $$ \frac{\sum_{i \le j} \deg(L^{(i)})}{\deg(L)} \ge \varepsilon_3 \frac{\sum_{i \le j} \deg(U_i)}{\deg(U)},$$
 for every $1 \le j \le s$.
  
  Observe now that either $ \deg(U) \le \varepsilon_4 \deg(Z_L)$ or there is a minimal $t$ such that the sum of the degrees of $U_1,\ldots,U_t$ exceeds $\varepsilon_4 \deg(Z_L)$. This last option in turn means on the one hand that the sum of the degrees of $U_1,\ldots,U_t$ is at most $2\varepsilon_4 \deg(Z_L)$, but it also means by our ordering that 
  \begin{equation}
  \label{lie}
   \sum_{1 \le i \le t} \deg(L^{(i)}) \ge \frac{\varepsilon_3 \varepsilon_4}{C} \deg(L).
   \end{equation}
  So in either case we end up with a collection $U_1,\ldots,U_t$ with their degrees summing to at most $2 \varepsilon_4 \deg(Z_L)$ and satisfying (\ref{lie}). Write $U^{\ast}=U_1 \cup \ldots \cup U_t$. It follows from Lemma \ref{BL} and Lemma \ref{loweb} that we can find a polynomial $f$ with 
  \begin{equation}
  \label{CB}
  \deg(f) \lesssim_{B^{(h)}} \varepsilon_4^{1/n} \frac{\deg(Z_L)}{\deg(W^{(h)})},
  \end{equation}
  vanishing on $U^{\ast}$ and cutting $W_i^{(h)}$ properly, for every $1 \le i \le r_h$. Choosing $\varepsilon_4>0$ sufficiently small with respect to $B^{(h)}$ we conclude, from the minimality of $Z_L$ and Bezout's theorem, that $Z(f)$ cannot contain all of $L$. Furthermore, again choosing $\varepsilon_4>0$ sufficiently small with respect to $B^{(h)}$, we see from Lemma \ref{ZL} that we can additionally guarantee that $\deg(f)$ is bounded by $\deg_R(\tilde{T},W^{(h)})$. Thus, $f$ provides the desired $\varepsilon_3 \varepsilon_4 C^{-1}$-good partition of $L$ at level $h$ with respect to $\tilde{T}$. 
  \end{proof}
  
  \subsection{Decreasing the level}
  \label{43}
  
  We will now show that if $L$ is a large subset of $\tilde{T}$ that does not admit a good partition then, after removing some elements from $L$ lying in the zero set of some small polynomial $f$, we can obtain a large subset of $L$ that reduces strongly to level $h-1$.
  
  \begin{prop}
  \label{nogood}
 Given any $\varepsilon_3 \gtrsim_{B^{(h)}} 1$ there exists some $\tau \gtrsim_{B^{(h)},a_{h}^{(h-1)}}1$ such that the following holds. Let $L$ be a subset of $\tilde{T}$ with $\deg(L) \gtrsim_{B^{(h)}} \deg(T)$. If $L$ does not admit a $\tau$-good partition at level $h$ with respect to $\tilde{T}$, then we can find a (possibly trivial) partition $L=L_1 \cup L_2$ such that $\deg(L_2) \ge (1-\varepsilon_3) \deg(L)$, $L_2$ reduces strongly to level $h-1$ and $L_1=L_f$ for some polynomial $f$ of degree $\lesssim_{B^{(h)}} \deg_R(\tilde{T},W^{(h)}) $.
  \end{prop}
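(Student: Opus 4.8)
The plan is to read the proposition as a descent statement: the failure of a good partition at level $h$ is, up to removing a low-degree slice, exactly the structural input that lets $L$ reduce strongly one level further. So I would first fix parameters, then perform the descent, then verify every clause of Definition~\ref{SR} at level $h-1$.

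\emph{Parameters.} Apply Lemma~\ref{SV} with $C\sim_{B^{(h)}}1$ and some $\varepsilon_4\gtrsim_{B^{(h)}}1$ chosen small enough, which together with the given $\varepsilon_3$ furnishes a threshold $\varepsilon_3\varepsilon_4C^{-1}$. I would also commit now to the level-$(h-1)$ constants $b_i^{(h-1)}$ (each small with respect to $B^{(h)}$, with $b_0^{(h-1)}=b_8^{(h-1)}$ the smallest) and the $a_{k+1}^{(h-1)}$, and then set $\tau\sim_{B^{(h)},a_h^{(h-1)}}1$ to be smaller than both $\varepsilon_3\varepsilon_4C^{-1}$ and $\tfrac14 a_h^{(h-1)}$.

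\emph{Descent.} Let $Z_L$ be a minimal $(h-1)$-dimensional algebraic set containing $L$ (which we may take inside $W^{(h)}$); by Lemma~\ref{ZL} one has $\deg(Z_L)\lesssim_{B^{(h)}}\deg(W^{(h)})\deg_R(\tilde T,W^{(h)})$, while $\deg(Z_L)\gtrsim_{B^{(h)}}\deg(W^{(h)})\deg(P_{n-h})$ by Lemma~\ref{loweb} (applicable since $\deg(L)\gtrsim_{B^{(h)}}\deg(T)$). Decompose $Z_L$ into irreducible components and call a component \emph{big} when its degree exceeds $\varepsilon_4\deg(Z_L)$; there are $O_{B^{(h)}}(1)$ of these, and at least one. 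Taking the $U_i$ in Lemma~\ref{SV} to be the small components (their union lies in $Z_L$, so has degree $\le\deg(Z_L)$, and each has degree $\le\varepsilon_4\deg(Z_L)$), the hypothesis that $L$ admits no $\tau$-good partition at level $h$ with respect to $\tilde T$ forces the small components to contain less than $\varepsilon_3\deg(L)$ of $\deg(L)$. Let $W^{(h-1)}$ be the union of the big components, $L_2$ the elements of $L$ lying in no small component, and $L_1=L\setminus L_2$. Then $\deg(L_2)\ge(1-\varepsilon_3)\deg(L)\gtrsim_{B^{(h)}}\deg(T)$, $L_2\subseteq W^{(h-1)}$, and each component of $W^{(h-1)}$ meets $L_2$. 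The elements of $L_1$ lie in the union of the small components, an $(h-1)$-dimensional set of degree $\le\deg(Z_L)$ containing no big component, so Lemma~\ref{BL} (combined with Lemma~\ref{simplelinear} to also avoid the $O_{B^{(h)}}(1)$ big components) produces a polynomial $f$ of degree $\lesssim_{B^{(h)}}\deg_R(\tilde T,W^{(h)})$ with $L_f=L_1$.

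\emph{Verifying the reduction.} At levels $h\le k<n$ I would reuse the polynomials $P_1,\dots,P_{n-h}$, varieties $W^{(k)}$ and sets $Y^{(k)}$ that witness the strong reduction of $T$; clauses (\ref{lines}), (\ref{barmin}) and the degree lower bound transfer because $\deg(L_2)\sim_{B^{(h)}}\deg(T)$ (using Lemma~\ref{SI}), and the non-existence of an $a_{k+1}^{(h-1)}/2$-good partition of $L_2$ over $W^{(k+1)}$ follows since $\deg_R(L_2,W^{(k+1)})\le\deg_R(\tilde T,W^{(k+1)})$ and a polynomial not vanishing on all of $L_2$ cannot vanish on all of $L$: such a partition would yield one of $L$ of comparable quality, contradicting either the strong reduction of $T$ (for $k>h$) or the hypothesis, via $\tau\le\tfrac14 a_h^{(h-1)}$ (for $k=h$). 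At level $h-1$, I would take for $P_{n-h+1}$ a sufficiently generic polynomial of degree $\lesssim_{B^{(h)}}\deg(Z_L)/\deg(W^{(h)})$ (Lemma~\ref{BL}) vanishing on $W^{(h-1)}$ but on no component of $W^{(h)}$; then $W^{(h)}\cap Z(P_{n-h+1})$ is pure $(h-1)$-dimensional, its components carrying an element of $L_2$ are precisely the big components of $Z_L$ (the residual components, being generic, carry no element of $L$), and these are honest components of $Z(P_1,\dots,P_{n-h+1})$ because they meet $T$ while $Y^{(h)}$ does not. Clauses (\ref{hmin}) and (\ref{barmin}) at level $h-1$ come from the minimality of $Z_L$: each component $V$ of $Z_L$ is itself a minimal $(h-1)$-dimensional set for the elements of $L$ it contains, so any $(h-1)$-dimensional set containing $L_2$ has degree at least $\max_i\deg(V_i)\gtrsim_{B^{(h)}}\deg(W^{(h-1)})$. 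The remaining clauses of (\ref{lines}) for $k=h-1$ are read off from $\deg(W^{(h-1)})\sim_{B^{(h)}}\deg(Z_L)$, $\deg(P_{n-h+1})\sim_{B^{(h)}}\deg(Z_L)/\deg(W^{(h)})$, $\deg(P_{n-h})\sim_{B^{(h)}}\delta(W^{(h)})$ (Lemma~\ref{q}) and $\deg(Z_L)\gtrsim_{B^{(h)}}\deg(W^{(h)})\deg(P_{n-h})$, and the lower bound $\deg(L_2)\ge 2b_7^{(h-1)}\deg(W_i^{(h-1)})\deg(P_{n-h+1})^{h-1-l}$ holds once $b_7^{(h-1)}$ is small, since the right side is $\lesssim_{B^{(h-1)}}\deg(T)$ by the reduction hypotheses for $T$.

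\emph{Main obstacle.} The delicate point is the level-$(h-1)$ construction: one must choose $P_{n-h+1}$ and $W^{(h-1)}$ so that all clauses of Definition~\ref{SR} hold simultaneously, reconciling the description of $W^{(h-1)}$ as the big components of the minimal set $Z_L$ (which is what makes the minimality clauses true) with its description as the relevant components of $Z(P_1,\dots,P_{n-h+1})\subseteq W^{(h)}$ (which is what embeds it in the ambient reduction), while keeping $\deg(P_{n-h+1})$ small enough for $f$ and for the good-partition bookkeeping yet large enough that $W^{(h)}\cap Z(P_{n-h+1})$ does not swell past $\deg(Z_L)$. Controlling the residual components of this hypersurface section by genericity, and tracking which relative degree enters each ``no good partition'' clause, is where the real work lies.
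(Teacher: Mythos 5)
Your plan inverts the paper's logical order, and that inversion is where the argument breaks. You fix $W^{(h-1)}$ first, as the big irreducible components of the minimal $(h-1)$-dimensional set $Z_L$ containing $L$, and then try to retrofit a defining polynomial $P_{n-h+1}$ afterwards. The paper instead chooses $P_{n-h+1}$ first — a minimal-degree polynomial vanishing on all of $L$ (not just on some pre-declared $W^{(h-1)}$) without vanishing on any $W_i^{(h)}$ — and only then extracts $W^{(h-1)}$ as the high-degree $(h-1)$-dimensional irreducible components of $Z(P_1,\dots,P_{n-h+1})$ that contain elements of the pruned set $L_2$. This ordering is not cosmetic: because $Z(P_1,\dots,P_{n-h+1})$ already contains $L$, every element of $L_2$ sits inside one of its $(h-1)$-dimensional components, the set $L_2$ is carved out precisely to avoid the small ones, and $Y^{(h-1)}$ — everything else — then tautologically contains no element of $L_2$. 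No genericity is needed anywhere.

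In your version, $P_{n-h+1}$ is required to vanish only on $W^{(h-1)}$. Then $Z(P_{n-h+1})\cap W^{(h)}$ is pure of dimension $h-1$ and contains the big components of $Z_L$ as honest components, but it also produces residual $(h-1)$-dimensional components, and nothing forces those to miss $L_2$. You write that ``the residual components, being generic, carry no element of $L$,'' but an element $\ell\in L_2$ lies inside $W^{(h-1)}\subseteq Z(P_{n-h+1})$ regardless of which $P_{n-h+1}$ you pick, and whether $\ell$ also lies in the closure of the non-$W^{(h-1)}$ part of $Z(P_{n-h+1})\cap W^{(h)}$ is a nontrivial local transversality condition: you would need to exhibit, for each of finitely many $\ell$, at least one admissible $P_{n-h+1}$ of the correct degree with respect to which $\ell$ is not in the residual, and there is no reason a priori that the space of polynomials of degree $\lesssim \deg(Z_L)/\deg(W^{(h)})$ vanishing on $W^{(h-1)}$ is rich enough to do this (indeed this is exactly the kind of failure encoded in the paper's class $\mathcal{J}_{\C^n}$). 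If this fails for even one $\ell$, then $Y^{(h-1)}$ contains an element of $L_2$ and Definition~\ref{reduction} is violated. You correctly identify this as the main obstacle, but it is not an obstacle to be reconciled — it is the reason the paper does not take your route.

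A smaller but related issue: you define $L_1$ as the elements of $L$ in the small components of $Z_L$ and then want a polynomial $f$ of low degree with $L_f=L_1$. The $f$ produced by Lemma~\ref{BL} and Lemma~\ref{simplelinear} vanishes on the small components without vanishing on $W^{(h)}$ or on the big components of $Z_L$, but it could still vanish on individual elements of $L_2$, in which case $L_f\supsetneq L_1$. The paper avoids this by the opposite convention: it constructs $Q$ (vanishing on the grouped small components of $Z(P_1,\dots,P_{n-h+1})$) and then \emph{defines} $L_1:=L_{Z(Q)\cap W^{(h)}}$ and $L_2:=L\setminus L_1$, so the identity $L_f=L_1$ holds by fiat. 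The same fix works in your setting (define $L_1:=L_f$ and discard big components of $Z_L$ that lose all their $L$-elements), but you should be explicit, because otherwise the claim ``$L_f=L_1$'' is false as stated. The remaining items — the use of Lemma~\ref{SV} to bound $\deg(L_1)$, the minimality argument for (\ref{barmin}) via the componentwise minimality of $Z_L$, the transfer of the no-good-partition clauses, and the degree bookkeeping for (\ref{lines}) — are essentially sound and in some places slightly cleaner than the paper's (e.g.\ your minimality derivation of $\deg(Z_{L_2})\gtrsim\deg(W^{(h-1)})$), but they all ride on a choice of $W^{(h-1)}$ and $P_{n-h+1}$ that you have not shown to exist.
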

  
  \begin{proof}
  Let $P_{n-h+1}$ be a polynomial of the smallest possible degree vanishing on $L$ without vanishing on any component of $W^{(h)}$ and let
  $$ Z(P_1,\ldots,P_{n-h+1}) = Z_1 \cup \ldots \cup Z_s,$$
  be the irreducible decomposition of $Z(P_1,\ldots,P_{n-h+1})$. Notice that every irreducible component $Z_j$ of $Z(P_1,\ldots,P_{n-h+1})$ that has dimension at least $h$ must be contained inside an irreducible component of $Z(P_1,\ldots,P_{n-h})$ that is not of the form $W_i^{(h)}$ for some $1 \le i \le r_h$. Therefore, such a choice of $Z_j$ cannot contain elements from $L$, implying that all elements of $L$ are contained inside the $(h-1)$-dimensional irreducible components of $Z(P_1,\ldots,P_{n-h+1})$.
  
  By (\ref{lines}) and Lemma \ref{loweb} we know that 
  $$\deg(Z_L) \gtrsim_{B^{(h)}} \deg(W_i^{(h)}) \delta(W_i^{(h)}),$$
  for every $1 \le i \le r_h$ and therefore, by Lemma \ref{BL}, we can find some polynomial of degree $\lesssim_{B^{(h)}} \frac{\deg(Z_L)}{\deg(W^{(h)})}$ vanishing on $Z_L$ (and therefore on all of $L$) without vanishing on any component of $W^{(h)}$. By the minimality of $\deg(P_{n-h+1})$, this means that
  \begin{equation}
  \label{owl}
  \deg(P_{n-h+1}) \lesssim_{B^{(h)}} \frac{\deg(Z_L)}{\deg(W^{(h)})}.
  \end{equation}
  In particular, since $\deg(W^{(h)}) \gtrsim_{B^{(h)}} \deg(Z(P_1,\ldots,P_{n-h}))$ by Definition \ref{reduction}, we see from Bezout's theorem that 
  \begin{equation}
  \label{FF}
  \deg(Z(P_1,\ldots,P_{n-h+1})) \lesssim_{B^{(h)}} \deg(Z_L).
  \end{equation}
  
  We may assume the $Z_i$ are ordered in increasing order of dimension and then on decreasing order of degrees. Let $\epsilon_1 \gtrsim_{B^{(h)}} 1$ be a parameter to be specified soon. We write $Z^{(1)}=Z_1 \cup \ldots \cup Z_t$ for the union of those irreducible components of dimension $h-1$ with $\deg(Z_i) > \epsilon_1 \deg(Z_L)$, $Z^{(2)}=Z_{t+1} \cup \ldots \cup Z_r$ for the union of those irreducible components of $Z$ of dimension $h-1$ and degree at most $\epsilon_1 \deg(Z_L)$ and $Z^{(3)}=Z_{r+1} \cup \ldots \cup Z_s$ for the remaining ones. In particular, by a previous observation we know that $Z^{(3)}$ does not contain any element from $L$. 
  
  Notice that we may group the components of $Z^{(2)}$ into at most $\lesssim \epsilon_1^{-1}$ algebraic sets $Z_1^{(2)}, \ldots, Z_m^{(2)}$ of degree at most $\epsilon_1 \deg(Z_L)$. For each such $Z_i^{(2)}$ we can use Lemma \ref{BL} to find a polynomial $Q_i$ of degree $\lesssim_{B^{(h)}} \epsilon_1^{1/n} \frac{\deg(Z_L)}{\deg(W^{(h)})}$ vanishing on $Z_i^{(2)}$ without vanishing on any component of $W^{(h)}$. In particular, upon choosing $\epsilon_1$ sufficiently small with respect to $B^{(h)}$, we can guarantee that
  $$ \deg(Z(Q_i) \cap W^{(h)}) \le \deg(Q_i) \deg(W^{(h)}) \le \varepsilon_4 \deg(Z_L),$$
  for some $\varepsilon_4 \sim_{B^{(h)}} 1$ that is sufficiently small with respect to $B^{(h)}$ as required in the statement of Lemma \ref{SV}. We conclude that the polynomial $Q=\prod_{1 \le i \le m} Q_i$ has degree 
  \begin{equation}
  \label{CI}
  \lesssim_{B^{(h)}} \epsilon_1 ^{-1} \varepsilon_4 \frac{\deg(Z_L) }{\deg(W^{(h)})},
  \end{equation}
  and all irreducible components of $U=Z(Q) \cap W^{(h)}$ have dimension $h-1$ and degree at most $\varepsilon_4 \deg(Z_L)$. Furthermore, we see from Bezout's theorem that $ \deg(U) \le C \deg(Z_L)$ for some $C \lesssim_{B^{(h)}} 1$. Given that we also have the estimate $\varepsilon_4 \gtrsim_{B^{(h)}} 1$, upon choosing $\tau$ sufficiently small with respect to $\varepsilon_3$ and $B^{(h)}$ and since we are assuming $L$ does not admit a $\tau$-good partition at level $h$ with respect to $\tilde{T}$, it follows from Lemma \ref{SV} that $U$ can contain at most $\varepsilon_3 \deg(L)$ elements of $L$.
  
   We write $L_1$ for those elements of $L$ inside of $U$ and $L_2=L \setminus L_1$, so in particular no element from $L_2$ lies in $Z^{(2)}$ or $Z^{(3)}$. Since by Lemma \ref{ZL} the expression (\ref{CI}) is $\lesssim_{B^{(h)}} \deg_R(\tilde{T},W^{(h)})$, the result will follow as long as we can show that $L_2$ reduces strongly to level $h-1$. We will show this is the case with $W^{(h-1)}$ being the algebraic set obtained from $Z^{(1)}$ upon discarding those irreducible components that contain no element from $L_2$. Notice that this guarantees that $W^{(h-1)} \subseteq W^{(h)}$, since we already know that $W^{(h-1)} \subseteq Z(P_1,\ldots,P_{n-h})$ and no irreducible component of the latter algebraic set, other than the irreducible components of $W^{(h)}$, contain elements from $L$. Furthermore, we have also ensured that no irreducible component of $Z(P_1,\ldots,P_{n-h+1})$, other than those of $W^{(h-1)}$, contain any element from $L_2$.
   
   Consider now $Z_{L_2}$ and assume $\deg(Z_{L_2}) < \epsilon_2 \deg(Z_L)$. If $\epsilon_2>0$ is sufficiently small with respect to $B^{(h)}$, we can use Lemma \ref{BL}, Lemma \ref{loweb} and Lemma \ref{ZL} to find a polynomial $f$ of degree at most 
   $$ \lesssim_{B^{(h)}} \epsilon_2^{1/n} \frac{\deg(Z_L)}{\deg(W^{(h)})} < \deg_R(\tilde{T},W^{(h)}),$$
   that vanishes on $Z_{L_2}$ (and therefore on $L_2$) and with $Z(f) \cap W^{(h)}$ having dimension $h-1$ and degree strictly less than $\deg(Z_L)$ by Bezout's theorem. Given that $\deg(L_2) \ge (1-\varepsilon_3) \deg(L)$, this would mean that this polynomial gives a $\tau$-good partition of $L$ at level $h$ with respect to $\tilde{T}$, which we are assuming it is not possible. We have thus shown that 
  \begin{equation}
  \begin{aligned}
  \deg(Z_{L_2}) &\gtrsim_{B^{(h)}} \deg(Z_L) \\
  &\gtrsim_{B^{(h)}} Z(P_1,\ldots,P_{n-h+1}) \\
   &\gtrsim_{B^{(h)}} \deg(W^{(h-1)}),
   \end{aligned}
   \end{equation}
  by (\ref{FF}), giving us the estimate (\ref{barmin}) for $W^{(h-1)}$. Notice that this estimate for $W^{(k)}$, $h \le k < n$, is a consequence of Lemma \ref{SI}.
  
    Suppose $L_2$ admits an $a_{h}^{(h-1)}/2$-good partition at level $h$. This means we can find a polynomial $f$ of degree at most $\deg_R(L_2,W^{(h)}) \le \deg_R(\tilde{T},W^{(h)})$, not vanishing on $L_2 \subseteq L$ and with $Z(f)$ containing at least $a_{h}^{(h-1)} \deg(L_2)/2 \ge \tau \deg(L)$ elements from $L_2$, provided $\tau$ was chosen sufficiently small with respect to $a_{h}^{(h-1)}$. This gives a $\tau$-good partition of $L$ at level $h$ with respect to $\tilde{T}$, which we are assuming is not possible. 
  
  Similarly, for every $h \le k < n$, we know $L_2$ does not admit an $a_{k+1}^{(h-1)}$/2-good partition at level $k+1$ from the fact that $\tilde{T}$ does not admit an $a_{k+1}^{(h)}$-good partition at level $k+1$. Indeed, suppose $f$ is a polynomial giving an $a_{k+1}^{(h-1)}/2$-good partition of $L_2$ at level $k+1$, for some $h \le k < n$. This means that $f$ is a polynomial of degree at most $\deg_R(L_2,W^{(k+1)}) \le \deg_R(\tilde{T},W^{(k+1)})$, that does not vanish at all elements of $L_2 \subseteq \tilde{T}$ and such that $Z(f)$ contains at least $a_{k+1}^{(h-1)} \deg(L_2)/2 \gtrsim_{B^{(h)}} a_{k+1}^{(h-1)} \deg(T)$ elements from $L_2$. This contradicts the fact that $\tilde{T}$ does not admit an $a_{k+1}^{(h)}$-good partition at level $k+1$, provided $a_{k+1}^{(h)}$ was chosen sufficiently small with respect to $a_{k+1}^{(h-1)}$ and $B^{(h)}$.
   
 Recall that we have defined the irreducible components $W_1^{(h-1)},\ldots,W_{r_{h-1}}^{(h-1)}$ of $W^{(h-1)}$ to be those irreducible components of $Z^{(1)}=Z_1 \cup \ldots \cup Z_t$ that contain elements from $L_2$. Since by definition of $Z^{(1)}$ we have that $\deg(Z_i) \ge \epsilon_1 \deg(Z_L)$ for every $1 \le i \le t$, and we know that $\deg(Z^{(1)}) \lesssim_{B^{(h)}} \deg(Z_L)$ by (\ref{FF}), we see that
 $$r_{h-1} \le t \lesssim_{B^{(h)}} \epsilon_1^{-1} \lesssim_{B^{(h)}} 1.$$
 Similarly, we deduce that
 $$\deg(W_i^{(h-1)}) \lesssim_{B^{(h)}} \deg(W_j^{(h-1)}),$$
 for every $1 \le i,j \le r_{h-1}$. 
 
 From Bezout's theorem we see that $\deg(P_{n-h+1}) \gtrsim_{B^{(h)}} \deg(P_{n-h})$, since otherwise we would be contradicting Lemma \ref{loweb}. It then follows by construction that we also have $\delta(Z_i) \lesssim_{B^{(h)}} \deg(P_{n-h+1})$ for every $1 \le i \le t$. On the other hand, we know by definition that for every $1 \le i \le t $ we can find some polynomial $f_i$ of degree at most $\delta(Z_i)$ that vanishes on $Z_i$ without vanishing on any of the components of $W^{(h)}$ containing $Z_i$, so by Bezout's theorem it must be $\deg(Z_i) \le \delta(Z_i) \deg(W^{(h)})$. Hence,
 $$ \delta(Z_i) \gtrsim_{B^{(h)}} \frac{\deg(Z_L)}{\deg(W^{(h)})} \gtrsim_{B^{(h)}} \deg(P_{n-h+1}),$$
 where we are using (\ref{owl}). We have thus shown that
 $$ \delta(W_i^{(h-1)}) \sim_{B^{(h)}} \deg(P_{n-h+1}),$$
 for every $1 \le i \le r_{h-1}$.
 
  It only remains to show that 
 \begin{equation}
 \label{al}
  \deg(L_2) \gtrsim_{B^{(h)}} \deg(Z_i) \delta(Z_i)^{h-1-l},
  \end{equation}
 for every $ 1 \le i \le t$, since the corresponding estimates with respect to $W^{(k)}$, $h \le k < n$, follow immediately from the fact that $\deg(L_2) \gtrsim_{B^{(h)}} \deg(L) \gtrsim_{B^{(h)}} \deg(T)$. Similarly, it will suffice to show that (\ref{al}) holds for $L$ in place of $L_2$. Suppose then that $\deg(L) \le \epsilon_3 \deg(Z_i) \delta(Z_i)^{h-1-l}$ for some small $\epsilon_3>0$ to be specified. Since $\deg(Z_i) \le \deg(W^{(h)}) \deg(P_{n-h+1})$, this means that 
 $$\deg(L) \lesssim_{B^{(h)}} \epsilon_3 \deg(W^{(h)}) \deg(P_{n-h+1})^{h-l}.$$
 If $\epsilon_3>0$ is chosen sufficiently small with respect to $B^{(h)}$ this would imply by Lemma \ref{BL} that we can find a polynomial of degree strictly less than $\deg(P_{n-h+1})$ vanishing on $L$ without vanishing on any irreducible component of $W^{(h)}$, contradicting our choice of $P_{n-h+1}$. The result follows.
\end{proof}

\subsection{Relative partitioning lemma}
\label{44}

The next lemma shows how we can iterate Lemma \ref{CII} to obtain a partition of any subset $L \subseteq \tilde{T}$, with a controlled error term for the incidences with respect to a fixed set of varieties $S'$ and such that one element of this partition satisfies a weak form of the dichotomy discussed in \S \ref{OM}. In the next section, we will show how to combine this with the tools we have developed in the rest of this article to give a proof of Theorem \ref{I2}.

\begin{lema}
\label{RP2}
Let $\varepsilon_2,\tau > 0$. Let $L \subseteq \tilde{T} \subseteq T$ and let $S'$ be a finite set of varieties of dimension $\dim(T)-1$. Then, there exist partitions $S'=S_1 \cup \ldots \cup S_r$, $L=L_1 \cup \ldots \cup L_r$, such that
\begin{equation}
\label{recur0}
\mathcal{I}(S',L) \le \sum_{i=1}^r \mathcal{I}(S_i,L_i) + O_{\varepsilon_2}(\deg(L) \deg_R(\tilde{T},W^{(h)})),
 \end{equation}
Furthermore, $\deg(L_r) \gtrsim_{\varepsilon_2,\tau} \deg(L)$ and either $\deg(L_r) < \frac{\varepsilon_2}{2} \deg(T)$ or $L_r$ does not a admit a $\tau$-good partition at level $h$ with respect to $\tilde{T}$.
\end{lema}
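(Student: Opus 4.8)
The plan is to iterate Lemma \ref{CII}, at each stage recursing on the part of the current family lying inside the zero set of the chosen partitioning polynomial and recording the complementary part as a block of the output partitions. I would set $L^{(0)}=L$, $S^{(0)}=S'$, and proceed inductively. Given $L^{(j)}$ --- a subset of $\tilde{T}$, so it makes sense to ask whether it admits a good partition at level $h$ with respect to $\tilde{T}$ --- I stop if either $\deg(L^{(j)})<\varepsilon_2\deg(T)/2$ or $L^{(j)}$ admits no $\tau$-good partition at level $h$ with respect to $\tilde{T}$, setting $L_r=L^{(j)}$, $S_r=S^{(j)}$. Otherwise I take the polynomial $f_j$ furnished by the good partition, so that $\deg(f_j)\le\deg_R(\tilde{T},W^{(h)})$ and $\tau\deg(L^{(j)})<\deg(L^{(j)}_{f_j})<\deg(L^{(j)})$, apply Lemma \ref{CII} to $(S^{(j)},L^{(j)})$ with $f=f_j$ (the dimension hypothesis holds since the elements of $L^{(j)}\subseteq T$ and of $S^{(j)}\subseteq S'$ have dimensions $\dim(T)$ and $\dim(T)-1$), record the pair $(S^{(j)}\setminus S^{(j)}_{f_j},\,L^{(j)}\setminus L^{(j)}_{f_j})$ as a block, and continue on $(S^{(j+1)},L^{(j+1)}):=(S^{(j)}_{f_j},L^{(j)}_{f_j})$. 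As $\deg(L^{(j+1)})<\deg(L^{(j)})$ and degrees are positive integers, the process halts after some finite number $N$ of steps.

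Telescoping the inequalities given by Lemma \ref{CII} then yields (\ref{recur0}), with blocks $L_i=L^{(i-1)}\setminus L^{(i)}$, $S_i=S^{(i-1)}\setminus S^{(i)}$ for $1\le i\le N$, with $L_r=L^{(N)}$, $S_r=S^{(N)}$ (so $r=N+1$), and with total error $\sum_{0\le j<N}\deg(L^{(j)}\setminus L^{(j+1)})\deg(f_j)$. The observation that makes the error harmless is that the sets $L^{(j)}\setminus L^{(j+1)}$ are pairwise disjoint subsets of $L$, so $\sum_{j<N}\deg(L^{(j)}\setminus L^{(j+1)})\le\deg(L)$, while every $\deg(f_j)$ is bounded by the single quantity $\deg_R(\tilde{T},W^{(h)})$; hence the total error is at most $\deg(L)\deg_R(\tilde{T},W^{(h)})$, in particular $O_{\varepsilon_2}(\deg(L)\deg_R(\tilde{T},W^{(h)}))$. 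It is precisely because we always recurse on the part inside $Z(f_j)$ that this bound is insensitive to the size of $N$.

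Finally, $L_r=L^{(N)}$ satisfies the stated dichotomy by the stopping rule, and for the lower bound $\deg(L_r)\gtrsim_{\varepsilon_2,\tau}\deg(L)$ one argues as follows: if $N=0$ there is nothing to prove; if $N\ge1$ then the process did not stop at step $N-1$, so $\deg(L^{(N-1)})\ge\varepsilon_2\deg(T)/2\ge\varepsilon_2\deg(L)/2$ since $L\subseteq T$, and now either $\deg(L^{(N)})\ge\varepsilon_2\deg(T)/2$ still holds (the process having stopped because $L^{(N)}$ admits no good partition), or $\deg(L^{(N)})<\varepsilon_2\deg(T)/2$, whence $\deg(L^{(N)})=\deg(L^{(N-1)}_{f_{N-1}})>\tau\deg(L^{(N-1)})\ge\tau\varepsilon_2\deg(L)/2$; either way $\deg(L_r)\gtrsim_{\varepsilon_2,\tau}\deg(L)$. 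The delicate point, which I expect to be the crux, is exactly to keep the error bounded while preventing $L$ from being eroded to a negligible set over a possibly large number of steps; this is resolved by the two design choices above --- always continuing with the part inside $Z(f_j)$, which forces the discarded blocks to be disjoint and so bounds the error uniformly in $N$, and halting as soon as $\deg(L^{(j)})$ falls below $\varepsilon_2\deg(T)/2$, so that (since a good partition always retains a $\tau$-fraction) $\deg(L_r)$ cannot undershoot that threshold by more than a factor of $\tau$.
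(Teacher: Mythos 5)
Your proof is correct, and it takes a genuinely different (and cleaner) route from the paper's. In the paper, at each step the active piece $L_s$ is split by the partitioning polynomial $f$ and the recursion always continues on whichever side of $Z(f)$ is \emph{smaller}; since this smaller side may be the outside of $Z(f)$, the error term $\deg(L_s\setminus (L_s)_f)\deg(f)$ from Lemma \ref{CII} is not automatically controlled by disjointness, and the paper compensates with a $\log_2\bigl(\deg(L)/\deg(L_s)\bigr)\cdot\deg(L)\deg_R$ potential term that increments by at most $\deg(L)\deg_R$ per step and is $O_{\varepsilon_2}(1)$ steps in total (the active piece at least halves each time and is bounded below by $\varepsilon_2\deg(T)/2$). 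You instead always recurse on $L^{(j+1)}=L^{(j)}_{f_j}$, the part \emph{inside} $Z(f_j)$. With this choice the discarded blocks $L^{(j)}\setminus L^{(j+1)}$ are exactly the pieces that appear in the Lemma \ref{CII} error, and since they are pairwise disjoint subsets of $L$ the total error telescopes to $\le\deg(L)\deg_R(\tilde T,W^{(h)})$ with an absolute constant, independent of $\varepsilon_2$, $\tau$, and the (possibly large) number of steps. Your handling of the lower bound $\deg(L_r)\gtrsim_{\varepsilon_2,\tau}\deg(L)$ via the stopping rule and the $\tau$-retention of a good partition is also correct and matches the role $\tau$ plays in the paper's terminal case. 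In short: same inductive use of Lemma \ref{CII}, but you replace the paper's log-potential bookkeeping with a disjointness argument by committing to recursing inside $Z(f)$, which both simplifies the argument and sharpens the implicit constant.
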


\begin{proof}
We are going to proceed recursively, building partitions $S'=S_1 \cup \ldots \cup S_s$, $L=L_1 \cup \ldots \cup L_s$ and then partitioning both $S_s$ and $L_s$ into two new sets that, using a convenient abuse of notation, we rename as $S_s$ and $S_{s+1}$ and $L_s$ and $L_{s+1}$ respectively. So we start with $S_1=S'$, $L_1=L$ and notice that if $L_1$ does not admit a $\tau$-good partition at level $h$ with respect to $\tilde{T}$ or has degree less than $\frac{\varepsilon_2}{2} \deg(T)$ we are done. Hence, we may recursively assume that we have constructed partitions $S'=S_1 \cup \ldots \cup S_s$, $L=L_1 \cup \ldots \cup L_s$, such that
\begin{equation}
\begin{aligned}
\label{recur}
\mathcal{I}(S',L) \le \sum_{i=1}^s \mathcal{I}(S_i,L_i) &+ \left( \log_2 \frac{\deg(L)}{\deg(L_s)} \right) \deg(L) \deg_R(\tilde{T},W^{(h)}) \\
&+ \sum_{i=1}^{s-1}\deg(L_i) \deg_R(\tilde{T},W^{(h)}),
\end{aligned}
 \end{equation}
with $L_s$ having degree at least $\frac{\varepsilon_2}{2} \deg(T)$ and admitting a $\tau$-good partition at level $h$ with respect to $\tilde{T}$. We rename $\tilde{S}=S_s$, $\tilde{L}=L_s$ and let $f_s$ be a polynomial giving a $\tau$-good partition of $\tilde{L}$ at level $h$ with respect to $\tilde{T}$.

Assume first that both elements of this partition, $\tilde{L}_{f_s}$ and $\tilde{L} \setminus \tilde{L}_{f_s}$, have degree at least $\frac{\varepsilon_2}{2} \deg(T)$. If $\deg(\tilde{L}_{f_s}) > \deg(\tilde{L})/2$, we let $L_s = \tilde{L}_{f_s}, S_s = \tilde{S}_{f_s}$ and let $L_{s+1},S_{s+1}$ be their complements in $\tilde{L},\tilde{S}$ respectively. By Lemma \ref{CII}, we have that
\begin{equation}
\label{decom1}
 \mathcal{I}(\tilde{S},\tilde{L}) \le \mathcal{I}(S_s,L_s) + \mathcal{I}(S_{s+1},L_{s+1}) + \deg(L_{s+1}) \deg_R(\tilde{T},W^{(h)}).
 \end{equation}
 We insert this in (\ref{recur}) to obtain the corresponding bound in this case, noticing that 
 $$ \log_2 \frac{\deg(L)}{\deg(\tilde{L})} + 1 \le \log_2  \frac{\deg(L)}{\deg(L_{s+1})},$$
since $\deg(L_{s+1}) \le \deg(\tilde{L})/2$.

On the other hand, if $\deg(\tilde{L}_{f_s}) \le \deg(\tilde{L})/2$, we let $L_{s+1} = \tilde{L}_{f_s}, S_{s+1} = \tilde{S}_{f_s}$ and let $L_{s},S_{s}$ be their complements in $\tilde{L},\tilde{S}$ respectively. In this case, we have by Lemma \ref{CII} that
\begin{equation}
\label{decom2}
 \mathcal{I}(\tilde{S},\tilde{L}) \le \mathcal{I}(S_s,L_s) + \mathcal{I}(S_{s+1},L_{s+1}) + \deg(L_s) \deg_R(\tilde{T},W^{(h)}) ,
 \end{equation}
and we may insert this in (\ref{recur}) as we did before.

In either case, if $L_{s+1}$ does not admit a $\tau$-good partition at level $h$ with respect to $\tilde{T}$, then the result is proven since
$$ \log_2  \frac{\deg(L)}{\deg(L_{s+1})} \lesssim_{\varepsilon_2} 1,$$
 while otherwise we have completed the recursive step. Notice that since $\deg(L_{s+1})$ decreases at each step while remaining bounded from below by $\frac{\varepsilon_2}{2} \deg(T)$ this situation can only occur finitely many times.

It remains to show what happens when at least one element of the partition of $\tilde{L}$ has degree less than $\frac{\varepsilon_2}{2} \deg(T)$. In this case, we write $L_{s+1} = \tilde{L}_{f_s}, S_{s+1} = \tilde{S}_{f_s}$ and let $L_{s},S_{s}$ be their complements in $\tilde{L},\tilde{S}$ respectively. If only $L_s$ has degree less than $\frac{\varepsilon_2}{2} \deg(T)$, then we insert the bound (\ref{decom2}) in (\ref{recur}), which gives a satisfactory bound when we add the last term of both expressions. If $L_{s+1}$ does not admit a $\tau$-good partition at level $h$ with respect to $\tilde{T}$ we obtain the claim of the lemma we are trying to prove and otherwise we have completed the recursive step. It is clear this situation can also only occur finitely many times.

We conclude that after finitely many steps of the recursive process, either the result is proven or we must reach a first instance where $\deg(L_{s+1}) < \frac{\varepsilon_2}{2} \deg(T)$. But then, we may insert (\ref{decom2}) in (\ref{recur}) as before and the result follows, since 
$$\deg(L_{s+1}) > \tau \deg(\tilde{L}) \ge \tau \frac{\varepsilon_2}{2} \deg(T),$$
by the definition of a $\tau$-good partition and our lower bound on $\tilde{L}$.
\end{proof}
 
 \section{Proof of Theorem \ref{I2}}
 \label{5}
 
We now turn to the proof of Theorem \ref{I2} and therefore of Theorem \ref{I0}. As we discussed in \S \ref{31}, we may assume $\dim(T) < n$, since the result is trivial otherwise. We have also seen the result is clear if $|T| = O_n(1)$ or $T$ reduces strongly to level $l$, so we may assume $T$ reduces strongly to some level $l <h \le n$ and that the result holds for any proper subset of $T$ and if $T$ reduces strongly to some level $h'<h$. 

We will use the notations of Definition \ref{reduction} and Definition \ref{SR} to refer to the corresponding objects associated with $T$. We may abbreviate $\Dc_k(T)$ as $\Dc_k$ and we shall also say that $T$ admits a $\tau$-good partition at level $k$ to mean that it admits a $\tau$-good partition over $W^{(k)}$.

By Lemma \ref{Dta}, Lemma \ref{GSiegel0}, Lemma \ref{simplelinear} and Lemma \ref{q} we know there exists a polynomial $P$ of degree 
$$\lesssim_{B^{(h)}} \left( \frac{\deg(S)}{\Delta_{n-s}(W_1^{(h)})} \right)^{\frac{1}{s-\dim(S)}},$$
for some $h \le s \le n$, vanishing on $S$ without vanishing on $W_i^{(h)}$ for any $1 \le i \le r_h$. We will use this polynomial to bound the relative degree $\deg_R(\tilde{T},W^{(h)})$, for an adequate subset $\tilde{T}$ of $T$.

Let us assume first that $\deg(T_P) < \deg(T)/2$ and notice that
 \begin{equation}
 \begin{aligned}
  \mathcal{I}(S,T) &\le \mathcal{I}(S,T \setminus T_P) + \mathcal{I}(S,T_P).
  \end{aligned}
  \end{equation}
  Since $T_P$ is then a proper subset of $T$, we know by induction that
    \begin{equation}
  \begin{aligned}
  \label{sun0}
  \mathcal{I}(S,T_P) \le& \sum_{m=1}^{n-\dim(S)} K_{l,m} \deg(S)^{1/m} \deg(T_P)^{1-1/m} \Dc_{m+\dim(S)}(T_P)^{1/m} \\
  \le& \, K_{l,1} \deg(S) +  \sum_{m=2}^{n-\dim(S)} \frac{K_{l,m}}{\sqrt{2}} \deg(S)^{1/m} \deg(T)^{1-1/m} \Dc_{m+\dim(S)}(T)^{1/m}.
  \end{aligned}
  \end{equation}
  On the other hand, by Bezout's theorem, we see that
  \begin{equation}
  \begin{aligned} 
   \label{sun}
   \mathcal{I}(S,T \setminus T_P) &\le \deg(T \setminus T_P) \deg(P) \\
   &\lesssim_{B^{(h)}} \deg(T \setminus T_P)  \left( \frac{\deg(S)}{\Delta_{n-s}(W_1^{(h)})} \right)^{\frac{1}{s-\dim(S)}} \\
   &\lesssim_{B^{(h)}} \deg(T) \left( \frac{\deg(S)}{\deg(T)} \right)^{\frac{1}{s-\dim(S)}} \Dc_{s}(T)^{\frac{1}{s-\dim(S)}}.
   \end{aligned}
   \end{equation}
   Here we are using the fact that by Lemma \ref{Dta}, Definition \ref{reduction} and Lemma \ref{q} it is
   \begin{equation}
   \begin{aligned}
   \label{hello}
    \Delta_{n-s}(W_1^{(h)}) \sim_{B^{(h)}} \prod_{i=1}^{n-s} \delta_i(W_1^{(h)}) 
    \sim_{B^{(h)}} \prod_{i=1}^{n-s} \deg(P_i) 
    \sim_{B^{(h)}} \deg(W^{(s)}).
    \end{aligned}
    \end{equation}
   The result then follows in this case upon summing (\ref{sun}) to (\ref{sun0}) and choosing the constants $K_{l,m}$ to satisfy
   $$ \frac{K_{l,m}}{\sqrt{2}} + O_{B^{(h)}}(1) \le K_{l,h-\dim(S)} \le K_{l,m},$$
   for every $h - \dim(S) \le m \le n - \dim(S)$.
   
   We may therefore assume from now on that $\deg(T_P) \ge \deg(T)/2$ and write $\tilde{T}=T_P$. In particular, we see that $\tilde{T}$ reduces to level $h$ by Lemma \ref{srl}. Let $f$ be the polynomial given in Lemma \ref{T0} with respect to $\tilde{T}$. We have by Lemma \ref{CII} and Lemma \ref{or} that
$$  \mathcal{I}(S,\tilde{T}) \le \mathcal{I}(S \setminus S_f, \tilde{T} \setminus \tilde{T}_f) + \mathcal{I}(S_f, \tilde{T}_f) + \deg(\tilde{T}) \deg_R(\tilde{T},W^{(h)}).$$
We write $T_0 = (\tilde{T})_f$, $S_0 = S_f$ and apply Lemma \ref{RP2} with $L=\tilde{T} \setminus T_0$, $S'=S \setminus S_0$, $\varepsilon_2 \gtrsim_{B^{(h)}} 1$ as in Corollary \ref{DH2} and $\tau > 0$ of the form provided in Proposition \ref{nogood} with respect to some small $\varepsilon_3 \sim 1$. This produces partitions $\tilde{T}= T_0 \cup T_1 \cup \ldots \cup T_r$ and $S=S_0 \cup S_1 \cup \ldots \cup S_r$ with
 $$ \mathcal{I}(S,\tilde{T}) \le \sum_{i=0}^r \mathcal{I}(S_i,T_i) + O_{B^{(h)}} \left( \deg(\tilde{T}) \deg_R(\tilde{T},W^{(h)}) \right),$$
and with $\deg(T_r) \gtrsim_{B^{(h)}, a_h^{(h-1)}} \deg(T)$. If $T_r$ does not admit a $\tau$-good partition at level $h$ with respect to $\tilde{T}$ we can obtain from Proposition \ref{nogood} a polynomial $f^{\ast}$ of degree $\lesssim_{B^{(h)}} \deg_R(\tilde{T},W^{(h)})$ with
\begin{equation}
\begin{aligned}
 \mathcal{I}(S_r,T_r) \le \mathcal{I}(S_r \setminus (S_r)_{f^{\ast}}, T_r \setminus (T_r)_{f^{\ast}}) &+ \mathcal{I}((S_r)_{f^{\ast}}, (T_r)_{f^{\ast}}) \\
 &+ O_{B^{(h)}} \left( \deg(T_r) \deg_R(\tilde{T},W^{(h)}) \right),
 \end{aligned}
 \end{equation}
by Lemma \ref{CII}, and such that $T_r \setminus (T_r)_{f^{\ast}}$ reduces strongly to level $h-1$ while having size $\gtrsim_{B^{(h)},a_h^{(h-1)}} \deg(\tilde{T})$. We write $v=r+1$ in this case and $v=r$ if $T_r$ does admit a $\tau$-good partition at level $h$ with respect to $\tilde{T}$ with $\tau$ as above. In the former case, we then relabel $(T_r)_{f^{\ast}}$ as $T_r$, write $T_{r+1}$ for $T_r \setminus (T_r)_{f^{\ast}}$ and we similarly define $S_r$ and $S_{r+1}$. 

We have therefore partitioned $S$ and $\tilde{T}$ into $v+1$ pieces with
  \begin{equation}
  \label{v}
   \mathcal{I}(S,\tilde{T}) \le \sum_{i=0}^v \mathcal{I}(S_i,T_i) + O_{B^{(h)}} \left( \deg(\tilde{T}) \deg_R(\tilde{T},W^{(h)}) \right),
   \end{equation}
  with $\deg(T_v) \gtrsim_{B^{(h)}, a_h^{(h-1)}} \deg(T)$ and such either $\deg(T_v) < \frac{\varepsilon_2}{2} \deg(T)$ or $T_v$ reduces strongly to level $h-1$. Applying induction for each $0 \le i \le v-1$, we conclude that
  \begin{equation}
  \begin{aligned}
  \label{kd}
  \mathcal{I}(S,\tilde{T}) \le \sum_{i=0}^{v-1}& \sum_{1 \le m \le n-\dim(S)} K_{l,m} \deg(S_i)^{1/m} \deg(T_i)^{1-1/m} \Dc_{m+\dim(S)}(T_i)^{1/m} \\
  &+  \mathcal{I}(S_v,T_v) + O_{B^{(h)}} \left( \deg(\tilde{T}) \deg_R(\tilde{T},W^{(h)}) \right).
  \end{aligned}
  \end{equation}
Assume $\deg(S_v) \le \eta \deg(S)$ for some sufficiently small $\eta \gtrsim_{B^{(h)},a_h^{(h-1)}} 1$. This necessarily means that the partition of $T$ we have constructed is nontrivial and in particular $\deg(T_v)<\deg(T)$. We can therefore apply induction to conclude that
\begin{equation}
\begin{aligned}
\label{vs}
\mathcal{I}(S_v,T_v) &\le \sum_{1 \le m \le n-\dim(S)} K_{l,m} \deg(S_v)^{1/m} \deg(T_v)^{1-1/m} \Dc_{m+\dim(S)}(T_v)^{1/m} \\
&\le \eta^{\frac{1}{n-\dim(S)}}  \sum_{1 \le m \le n-\dim(S)} K_{l,m} \deg(S)^{1/m} \deg(T)^{1-1/m} \Dc_{m+\dim(S)}(T)^{1/m}.
\end{aligned}
\end{equation}
On the other hand, since $\deg(T_v) \gtrsim_{B^{(h)}, a_h^{(h-1)}} \deg(T)$, we see from Hölder's inequality that we can find some $\epsilon_1 \gtrsim_{B^{(h)}, a_h^{(h-1)}} 1$ such that (\ref{kd}) is bounded by
\begin{equation}
\begin{aligned}
\label{alt}
(1-\epsilon_1) &\sum_{2 \le m \le n-\dim(S)} K_{l,m} \deg(S)^{1/m} \deg(T)^{1-1/m} \Dc_{m+\dim(S)}(T)^{1/m} \\
&+ K_{l,1} \deg(S \setminus S_v) + \mathcal{I}(S_v,T_v) + O_{B^{(h)}} \left( \deg(\tilde{T}) \deg_R(\tilde{T},W^{(h)}) \right).
\end{aligned}
\end{equation}
We now use the fact that
\begin{equation}
\begin{aligned}
\label{sm}
  \deg_R(\tilde{T},W^{(h)}) \le \deg(P) &\lesssim_{B^{(h)}} \left( \frac{\deg(S)}{\Delta_{n-s}(W_1^{(h)})} \right)^{\frac{1}{s-\dim(S)}},\\
  &\lesssim_{B^{(h)}} \left( \frac{\deg(S)}{\deg(T)} \Dc_{s}(T) \right)^{\frac{1}{s-\dim(S)}}.
  \end{aligned}
  \end{equation}
  where the last bound follows the same argument we used when dealing with the case $\deg(T_P) < \deg(T)/2$. We now insert (\ref{vs}) and (\ref{sm}) in (\ref{alt}) and add the resulting bound for $\mathcal{I}(S,\tilde{T})$ to the bound (\ref{sun}) for $\mathcal{I}(S,T \setminus \tilde{T})$. The result then follows in this case upon choosing $\eta$ sufficiently small with respect to $\epsilon_1$ so that $\eta^{\frac{1}{n-\dim(S)}}<\epsilon_1/2$, say, and the constants $K_{l,m}$ so that
  $$ K_{l,m} (1-\epsilon_1/2) + O_{B^{(h)}}(1) \le K_{l,h-\dim(S)} \le K_{l,m},$$
  for every $h-\dim(S) \le m \le n-\dim(S)$.
    
  We may therefore assume from now on that $\deg(S_v) \gtrsim_{B^{(h)}, a_h^{(h-1)}} \deg(S)$. Using again induction for each $0 \le i \le v-1$, Hölder's inequality, the estimates (\ref{v}) and (\ref{sm}) and the bound (\ref{sun}) for $\mathcal{I}(S,T \setminus \tilde{T})$, we see that $\mathcal{I}(S,T)$ is bounded by
\begin{equation}
\begin{aligned}
\label{pas}
\sum_{1 \le m \le n-\dim(S)}& K_{l,m} \deg(S \setminus S_v)^{1/m} \deg(T \setminus T_v)^{1-1/m} \Dc_{m+\dim(S)}(T)^{1/m} \\
&+ \mathcal{I}(S_v,T_v) + O_{B^{(h)}} \left( \deg(T)\left( \frac{\deg(S)}{\deg(T)} \Dc_{s}(T) \right)^{\frac{1}{s-\dim(S)}} \right).
\end{aligned}
\end{equation}

If $T_v$ reduces strongly to level $h-1$ we have by induction
\begin{equation}
\label{hun}
 \mathcal{I}(S_v,T_v) \le \sum_{1 \le m \le n-\dim(S)} K_{l,m}^{(h-1)} \deg(S_v)^{1/m} \deg(T_v)^{1-1/m} \Dc_{m+\dim(S)}(T)^{1/m}.
 \end{equation}
Since $\deg(S) \lesssim_{B^{(h)}, a_h^{(h-1)}} \deg(S_v)$ and $\deg(T) \lesssim_{B^{(h)}, a_h^{(h-1)}} \deg(T_v)$, the result follows in this case upon inserting (\ref{hun}) in (\ref{pas}) and choosing the constants $K_{l,m}$ to satisfy
$$ K_{l,h-1-\dim(S)} + O_{B^{(h)}, a_h^{(h-1)}}(1) < K_{l,h-\dim(S)}.$$
We may therefore assume that $\deg(T_v) < \frac{\varepsilon_2}{2} \deg(T)$ in which case we can apply Corollary \ref{DH2} to deduce that $\Dc_k(T_v) < \Dc_k/2$ for every $h \le k \le n$. As a consequence, applying induction to estimate $\mathcal{I}(S_v,T_v)$ in (\ref{pas}) and using again that $\deg(S) \lesssim_{B^{(h)}, a_h^{(h-1)}} \deg(S_v)$ and $\deg(T) \lesssim_{B^{(h)}, a_h^{(h-1)}} \deg(T_v)$, the result also follows in this case upon choosing the constants $K_{l,m}$ to satisfy
$$ \frac{K_{l,m}}{2^{1/m}} + O_{B^{(h)}, a_h^{(h-1)}}(1) < K_{l,h-\dim(S)} \le K_{l,m},$$
  for every $h-\dim(S) \le m \le n-\dim(S)$.

\section{Alternative bounds on the relative degree}
\label{6}

In this section we show how to establish Theorem \ref{R} and Theorem \ref{p}. This is accomplished by means of slight modifications of the proof of Theorem \ref{I0} based on alternative ways of bounding the relative degrees.

\subsection{Proof of Theorem \ref{R}}

We now proceed to the proof of Theorem \ref{R}. We will use the following standard lemma, a proof of which can be found in \cite{W5} (see also \cite{KST}).

\begin{lema}
\label{trivialbound}
If $S$ is $k$-free with respect to $T$, then
$$ \mathcal{I}(S,T) \le 2 |S||T|^{1-1/k}+ (k-1) |T|.$$
\end{lema}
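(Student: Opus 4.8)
The plan is to run the standard double-counting argument (of K{\H o}v{\'a}ri--S{\'o}s--Tur{\'a}n type; cf.\ \cite{KST}), using that in the setting of Theorem \ref{R} the set $S$ consists of points, so $\deg(s)=1$ for every $s \in S$ and $\mathcal{I}(S,T)=\sum_{t \in T} m_t$, where $m_t$ denotes the number of points of $S$ lying on $t$. If $|S|<k$ then $m_t \le k-1$ for all $t$ and the bound is trivial, so we may assume $|S| \ge k$.

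First I would count in two ways the pairs $(A,t)$ with $t \in T$ and $A$ a $k$-element subset of the set of points of $S$ on $t$. Summing over $t$ gives $\sum_{t \in T} \binom{m_t}{k}$ such pairs, while summing over $A$ and invoking the $k$-freeness of $S$ with respect to $T$ (each $A$ lies on at most one $t$) bounds the count by $\binom{|S|}{k} \le |S|^k/k!$. Hence $\sum_{t \in T} \binom{m_t}{k} \le |S|^k/k!$.

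Next I would restrict attention to $T' = \{t \in T : m_t \ge k\}$. For $t \in T'$ each factor in $m_t(m_t-1)\cdots(m_t-k+1)$ is at least $m_t-k+1$, so $\binom{m_t}{k} \ge (m_t-k+1)^k/k!$, giving $\sum_{t \in T'} (m_t-k+1)^k \le |S|^k$. Applying H\"older's inequality with exponents $k$ and $k/(k-1)$ then yields $\sum_{t \in T'}(m_t-k+1) \le |T'|^{1-1/k}|S| \le |T|^{1-1/k}|S|$. Since $m_t \le k-1$ for $t \notin T'$, adding the contributions gives $\mathcal{I}(S,T)=\sum_{t \in T} m_t \le |S||T|^{1-1/k}+(k-1)|T|$, which is in fact slightly stronger than the claimed estimate.

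There is no genuine obstacle in this argument; the only steps requiring minor care are the elementary convexity bound $\binom{m}{k} \ge (m-k+1)^k/k!$ (used to linearize the $\ell^k$-type control coming from the double count before applying H\"older) and the bookkeeping that separates $\sum_t m_t$ into the ``rich'' contribution $\sum_{t \in T'}(m_t-k+1)$, governed by the double count, and the ``poor'' contribution bounded crudely by $(k-1)|T|$. The factor $2$ in the statement leaves ample slack.
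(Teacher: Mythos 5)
Your argument is correct and is precisely the standard K\H{o}v\'ari--S\'os--Tur\'an double-counting argument that the paper invokes by citing \cite{W5} and \cite{KST} rather than reproducing; the bookkeeping (splitting off the $m_t<k$ terms, linearizing $\binom{m_t}{k}\ge(m_t-k+1)^k/k!$, then applying H\"older) is exactly the intended route. As you note, your version in fact gives $\mathcal{I}(S,T)\le |S||T|^{1-1/k}+(k-1)|T|$, which is slightly sharper than the stated bound with the factor $2$.
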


As with Theorem \ref{I0}, we shall deduce Theorem \ref{R} from the following equivalent statement.

\begin{teo}
\label{Rh}
For every integer $n \ge 1$ there exist constants $K_0 < K_1 < \ldots < K_n \lesssim_{n} 1$ such that the following holds. Let $T$ be a family of irreducible algebraic curves in $\R^n$ that reduces strongly to level $h$. Then, for any set of points $S \subseteq \R^n$ that is $k$-free with respect to $T$, we have the bound
$$ \mathcal{I}(S,T) \le \sum_{0 \le m \le n} K_m^{(h)} k^{1-\alpha(k,m)} |S|^{\alpha(k,m)} \deg(T)^{1-\alpha(k,m)} \Dc_m (T)^{\frac{k-1}{k} \alpha(k,m)},$$
with $\alpha(k,m)$ as in Theorem \ref{R} and where $K_m^{(h)}=K_h$ if $m \ge h$ and $K_m^{(h)} = K_m$ otherwise.
\end{teo}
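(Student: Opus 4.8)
The plan is to run the same induction scheme as in the proof of Theorem \ref{I2}, but replacing the two places where an \emph{a priori} bound on the relative degree $\deg_R(\tilde T, W^{(h)})$ was needed by inputs coming from polynomial partitioning over $\R^n$, exploiting that $S$ is now a set of points that is $k$-free with respect to $T$. As before we may assume $\dim(T) < n$ (the curves have dimension $1$, so in fact $1 \le h \le n$), that the result holds for all proper subsets of $T$ and whenever $T$ reduces strongly to a level $h' < h$, and we fix the notation of Definition \ref{reduction} and Definition \ref{SR} attached to $T$. The base cases $|T| = O_n(1)$ and $T$ reducing strongly to level $1$ are handled by Lemma \ref{trivialbound}, which gives precisely the $m=0$ term of the bound (using $\alpha(k,0)=0$, $\alpha(k,1) = 1$, and $\deg(T) = |T|$ for curves) together with a harmless contribution to the $m=1$ term.

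The key new ingredient is Theorem \ref{30par} applied with $V = W_1^{(h)}$. Given an integer parameter $M \ge 1$, we obtain $g \in \R[x_1,\ldots,x_n] \setminus I(W_1^{(h)})$ of degree $O_n(M)$ such that each connected component of $\R^n \setminus Z(g)$ contains $\lesssim_n |S| / (M^{n - i_{W_1^{(h)}}(M)} \Delta_{i_{W_1^{(h)}}(M)}(W_1^{(h)}))$ points of $S$; by Theorem \ref{T5} the real points of $W_1^{(h)}$ meet $\lesssim_n \deg(W_1^{(h)}) M^{h}$ of these components. Standard Cauchy--Schwarz/$k$-free bookkeeping over the cells (exactly as in the real point-curve incidence arguments, cf.\ \cite{W5}) then shows that, after choosing $M$ optimally subject to $\deg(g) = O_n(M)$ being at most $\deg_R(S, W^{(h)})$, all incidences with points lying in cells are controlled, while incidences on $Z(g) \cap W^{(h)}$ are pushed onto a subvariety of controlled relative degree. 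Concretely, this yields a polynomial $P$ with
\begin{equation}
\label{Pbound}
\deg_R(\tilde T, W^{(h)}) \le \deg(P) \lesssim_{B^{(h)}} \left( \frac{k\,|S|}{\deg(T)} \right)^{\alpha(k,s) \frac{k-1}{k} \cdot \frac{1}{?}} \cdots
\end{equation}
i.e.\ the analogue of (\ref{sm}) but with the exponent dictated by $\alpha(k,s)$ and with an extra factor accounting for $k$; here the bound is obtained by the same computation as in \cite{W5} relating $i_{W_1^{(h)}}(M)$, $\Delta_{i_{W_1^{(h)}}(M)}(W_1^{(h)})$ and $\deg(W^{(s)})$ via Lemma \ref{Dta}, Lemma \ref{q} and (\ref{hello}). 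With this estimate in hand, the rest of the argument is \emph{identical} to Section \ref{5}: apply Lemma \ref{T0}, then Lemma \ref{RP2} with $L = \tilde T \setminus T_0$, then Proposition \ref{nogood} to the residual set $T_r$, producing a partition into $v+1$ pieces with an error term $O_{B^{(h)}}(\deg(\tilde T) \deg_R(\tilde T, W^{(h)}))$, of which the distinguished piece $T_v$ either has small degree (apply Corollary \ref{DH2}) or reduces strongly to level $h-1$; induction on the level and on $|T|$, together with Hölder's inequality and the right choice of the constants $K_0 < \cdots < K_n$, closes the estimate.

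The main obstacle I expect is the bookkeeping in the partitioning step: unlike in Theorem \ref{I2}, where the partitioning polynomial $P$ came from a pure Siegel-lemma degree bound (Theorem \ref{GSiegel0}), here $P = g$ comes from Theorem \ref{30par} and one must simultaneously (i) verify that $\deg(g) = O_n(M)$ can be taken below $\deg_R(\tilde T, W^{(h)})$ for the optimal $M$ so that the later pruning steps remain valid, (ii) check that the cell-by-cell $k$-free incidence count combines with the count of components met by $W_1^{(h)}(\R)$ (Theorem \ref{T5}) to give exactly the exponent $\alpha(k,m)$ and the power $\frac{k-1}{k}$ on $\Dc_m(T)$, and (iii) confirm that the interaction term $\deg(T \setminus T_g)\deg(g)$ from Lemma \ref{CII} is absorbed with the correct exponents after summing the geometric-type series in $m$. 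Each of these is a routine but delicate computation; it is the interlock between the real-partitioning exponent $\alpha(k,m)$ and the algebraic-reduction machinery of Sections \ref{3}--\ref{4} — which was engineered to be degree-preserving precisely so that an external bound on $\deg_R$ can be substituted in — that makes the substitution go through cleanly. Everything downstream of \eqref{Pbound} is a verbatim copy of Section \ref{5} with $\deg(S)^{1/m}$ replaced by $|S|^{\alpha(k,m)}$ and the obvious adjustments to the constants.
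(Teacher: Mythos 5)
Your high-level strategy does match the paper's: apply Theorem \ref{30par} and Theorem \ref{T5} on a component of $W^{(h)}$ to obtain a partitioning polynomial, control the cell-incidences with Lemma \ref{trivialbound}, and then feed a bound on $\deg_R(\tilde{T},W^{(h)})$ into the Section \ref{5} machinery in place of (\ref{sm}). However, there is a genuine gap hidden in the sentence ``incidences on $Z(g)\cap W^{(h)}$ are pushed onto a subvariety of controlled relative degree.'' The set $W^{(h)}$ has up to $r_h=O_{B^{(h)}}(1)$ irreducible components, and Theorem \ref{30par} produces one partitioning polynomial $P_i$ per component $W_i^{(h)}$; it is not at all automatic that a \emph{single} polynomial $P$ of degree $\lesssim_{B^{(h)}} M$ exists that vanishes on the union $Z=\bigcup_i Z(P_i)\cap W_i^{(h)}$ without vanishing identically on any $W_j^{(h)}$ (indeed $P_i$ may well vanish on some other component $W_j^{(h)}$). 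The paper handles this by first fixing $s$ via $\delta_{n-s}(W_1^{(h)})\lesssim_n M_{n-s}\lesssim_n \delta_{n-s+1}(W_1^{(h)})$ (eq.\ (\ref{EDP0})), then choosing an auxiliary level $g$ with the $\eta$-parameters (eq.\ (\ref{nam})) so that $\delta(W^{(g)})\lesssim_{B^{(h)}} M_{n-s}$, covering $Z$ by the $(g-1)$-dimensional set $Z^{(g)}=\bigcup_i Z(P_i)\cap W^{(g)}_{j_i}$ of controlled degree, applying Lemma \ref{BL} to $Z^{(g)}$, and finally using Lemma \ref{q} to show the resulting $P$ cannot vanish on any $W_i^{(h)}$. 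Without this construction the relative-partitioning steps of Section \ref{5} simply cannot be initiated, so this is not a routine computation you may defer.

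A secondary but real issue is that your \eqref{Pbound} is left with a ``$?$'' and an anticipated ``extra factor accounting for $k$'' that in fact does not appear. The correct analogue of (\ref{sm}) is
$$\deg_R(T_P,W^{(h)}) \le \deg(P) \lesssim_{B^{(h)}} M_{n-s} \lesssim_{B^{(h)}} \left(\frac{|S|}{\deg(T)}\right)^{\alpha(k,s)} \Dc_s(T)^{\frac{k-1}{k}\alpha(k,s)},$$
which follows from the explicit choice
$$M_{n-s}=\left(\frac{|S|^k}{k^k\,\deg(T)\,\bigl(\prod_{i=0}^{n-s}\delta_i(W_1^{(h)})\bigr)^{k-1}}\right)^{\frac{1}{s(k-1)+1}}$$
together with (\ref{hello}); the $k^{-k}$ factor inside $M_{n-s}$ disappears since $\alpha(k,s)>0$. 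Once the multi-component construction and this exponent are in place, the rest of your outline (Lemma \ref{T0}, Lemma \ref{RP2}, Proposition \ref{nogood}, Corollary \ref{DH2}, induction on level and size, Hölder) does track the paper, modulo the additional observation that the inductive contributions $K_0 k\deg(T_i)$ sum to $K_0 k\deg(T)$ by disjointness.
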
 

\begin{proof}
Given integers $k,s \ge 1$, we shall write
$$ \alpha(k,s) = \frac{k}{s(k-1)+1} , \, \, \beta(k,s)=\frac{s(k-1)}{s(k-1)+1} , \, \, \gamma(k,s) = \frac{k-1}{s(k-1)+1}.$$
Notice that $1-\alpha(k,s)=\beta(k,s)-\gamma(k,s)$.

As in the proof of Theorem \ref{I2} we may assume that $T$ reduces strongly to some level $1 < h \le n$ and that the result holds both for proper subsets of $T$ and if $T$ reduces strongly to level $h-1$. We shall use the notations in Definition \ref{reduction} and Definition \ref{SR} to refer to the corresponding objects associated with $T$.

Let us consider first a single component $W_1^{(h)}$ of $W^{(h)}$. For every $h \le s \le n$ we shall consider the parameters
$$ M_{n-s} = \left( \frac{|S|^k}{k^k \deg(T) (\prod_{i=0}^{n-s} \delta_i(W_1^{(h)}))^{k-1}} \right)^{\frac{1}{s(k-1)+1}}.$$
Assume $h \le s < n$ is such that $M_{n-s} \lesssim_n \delta_{n-s}(W_1^{(h)})$. It follows that
\begin{equation}
\label{nag}
 M_{n-s-1} = M_{n-s}^{\frac{s(k-1)+1}{s(k-1)+1+(k-1)}} \delta_{n-s}(W_1^{(h)})^{\frac{k-1}{s(k-1)+1+(k-1)}} \lesssim_n \delta_{n-s}(W_1^{(h)}).
 \end{equation}
We now notice that if $M_0 \lesssim_n \delta_0(W_1^{(h)})=1$, it follows that $|S| \lesssim_n k \deg(T)^{1/k}$ and therefore we deduce from Lemma \ref{trivialbound} that
$$ \mathcal{I}(S,T) \lesssim_n k \deg(T).$$
We may therefore assume from now on that this is not the case. Combining this with the observation (\ref{nag}), we see that there must be a largest choice of $h \le s \le n$ with 
\begin{equation}
\label{EDP0}
\delta_{n-s}(W_1^{(h)}) \lesssim_n M_{n-s} \lesssim_n \delta_{n-s+1}(W_1^{(h)}),
\end{equation}
This allows us to apply Theorem \ref{30par} to find a polynomial $P_1$ of degree $\lesssim_n M_{n-s}$, not vanishing identically on $W_1^{(h)}$ and such that each connected component of $\R^n \setminus Z(P_1)$ contains at most
$$ \lesssim_n \frac{|S|}{M_{n-s}^{s} \prod_{i=0}^{n-s} \delta_i(W_1^{(h)})},$$
elements from $S_{W_1^{(h)}}$. Here we are using Lemma \ref{Dta}.

Let us write $\Omega_1,\ldots,\Omega_r$ for the connected components of $\R^n \setminus Z(P_1)$, $S^{(i)}$ for the elements of $S_{W_1^{(h)}}$ inside of $\Omega_i$ and $T^{(i)}$ for those elements of $T$ intersecting $\Omega_i$. Notice that by Theorem \ref{T5} each $t \in T$ belongs to $T^{(i)}$ for $\lesssim_n \deg(t) \deg(P_1)$ values of $1 \le i \le r$. Using Lemma \ref{trivialbound}, we can therefore bound $\mathcal{I}(S_{W_1^{(h)}} \setminus S_{P_1},T)$ by
\begin{equation}
\begin{aligned}
\label{mj}
&\lesssim \sum_{i=1}^r |S^{(i)}| |T^{(i)}|^{1-1/k} + k|T^{(i)}| \\
 &\lesssim_n \left( \frac{|S|}{M_{n-s}^{s} \prod_{i=0}^{n-s} \delta_i(W_1^{(h)})} \right)^{1-1/k} |S|^{1/k} (\deg(T)M_{n-s})^{1-1/k} + k\deg(T)M_{n-s} \\
 &\lesssim_n k^{1-\alpha(k,s)} |S|^{\alpha(k,s)}\deg(T)^{\beta(k,s)} \left(\prod_{i=0}^{n-s} \delta_i(W_1^{(h)}) \right)^{-\gamma(k,s)} \\
 &\lesssim_n k^{1-\alpha(k,s)} |S|^{\alpha(k,s)} \deg(T)^{1-\alpha(k,s)} \mathcal{D}_{s}(T)^{\frac{k-1}{k}\alpha(k,s)},
 \end{aligned}
 \end{equation}
  where we are using (\ref{hello}).
  
  We may repeat the above argument for each other component $W_i^{(h)}$ of $W^{(h)}$, $2 \le i \le r_h$, obtaining $r_h=O_{B^{(h)}}(1)$ bounds of the form (\ref{mj}) and corresponding polynomials $P_i$ with $\deg(P_i) \lesssim_{B^{(h)}} M_{n-s}$, for the same choice of $s$ as above (by Lemma \ref{q}). We therefore see that, writing $Z = \bigcup_{i=1}^{r_h} Z(P_i) \cap W_i^{(h)}$, the result will follow from an adequate bound on $\mathcal{I}(S_Z,T)$.
  
  Let $h \le g \le n$ be such that
  \begin{equation}
  \label{nam}
  \eta_{n-g} \delta_{n-g}(W_1^{(h)}) \le M_{n-s}< \eta_{n-g+1} \delta_{n-g+1}(W_1^{(h)}),
  \end{equation}
  where the constants $\eta_0,\ldots,\eta_{n-h+1} \gtrsim_{B^{(h)}} 1$ are chosen so that $\eta_{i+1}$ is sufficiently small with respect to $\eta_i$ and $B^{(h)}$. Notice that since $\delta_{n-h+1}(W_1^{(h)}) = \infty$, such a choice always exists. Using the notation of Definition \ref{reduction}, for each $1 \le i \le r_h$, we let $W_{j_i}^{(g)}$ be an irreducible component of $W^{(g)}$ containing $W_i^{(h)}$. Notice also that by Lemma \ref{q} and (\ref{nam}) we have
  \begin{equation}
  \label{EDP}
  \delta(W^{(g)}) \sim_{B^{(h)}} \deg(P_{n-g}) \sim_{B^{(h)}}  \delta_{n-g}(W_1^{(h)}) \lesssim_{B^{(h)},\eta_{n-g}} M_{n-s}.
  \end{equation}
We now observe that $Z^{(g)} = \bigcup_{i=1}^{r_h} Z(P_i) \cap W_{j_i}^{(g)}$ is a $(g-1)$-dimensional algebraic set of degree $\lesssim_{B^{(h)}} M_{n-s}  \deg(W^{(g)})$ containing $Z$. By Lemma \ref{BL} and (\ref{EDP}), we can then find a polynomial $P$ of degree $\lesssim_{B^{(h)},\eta_{n-g}} M_{n-s}$, vanishing on $Z^{(g)}$, and therefore on $Z$, without vanishing identically on any irreducible component of $W^{(g)}$. But if $P$ vanishes on $W_i^{(h)}$, for some $1 \le i \le r_h$, this implies by Lemma \ref{q} that 
$$\delta_{n-g+1}(W_1^{(h)}) \sim_{B^{(h)}} \delta_{n-g+1}(W_i^{(h)}) \lesssim_{B^{(h)},\eta_{n-g}} M_{n-s},$$
which contradicts (\ref{nam}) if $\eta_{n-g+1}$ was chosen sufficiently small with respect to $B^{(h)}$ and $\eta_{n-g}$. We conclude that $P$ vanishes on $Z^{(g)}$, and therefore on $S_Z$, without vanishing identically on any irreducible component of $W^{(h)}$.

From here the proof proceeds by exactly the same argument as in the proof of Theorem \ref{I2}, using that in our current case the bound (\ref{sm}) can now be replaced by
  \begin{equation}
  \begin{aligned}
  \label{agg}
  \deg_R(T_P,W^{(h)}) \le \deg(P) &\lesssim_{B^{(h)}} M_{n-s} \\
  &\lesssim_{B^{(h)}} \left( \frac{|S|}{\deg(T)} \right)^{\alpha(k,s)} \Dc_s(T)^{\frac{k-1}{k}\alpha(k,s)},
  \end{aligned}
  \end{equation}
  where we are using once again (\ref{hello}). Notice that writing $\tilde{T}=T_P$, the terms (\ref{mj}) and $O_{B^{(h)}} \left( \deg(\tilde{T}) \deg_R(\tilde{T},W^{(h)}) \right)$ will then be absorbed in the same way the corresponding terms where absorbed during the proof of Theorem \ref{I2}. We remark that the only slight difference in the proof is that in the current case, when evaluating inductively the expressions $\mathcal{I}(S_i,T_i)$ that arise in the argument, we will have an additional term of the form $K_0 k \deg(T_i)$. Nevertheless, these terms will clearly add up to at most $K_0 k \deg(T)$ by the disjointness of the sets $T_i$ constructed during the proof.
  \end{proof}
  
  \subsection{Proof of Theorem \ref{p}}
  
  To prove Theorem \ref{p} we will use the fact that if $T$ reduces to level $h$ and $p$ is a $\delta$-property with respect to an irreducible component of $W^{(h)}$, then we can essentially use the implicit polynomial in this definition to bound the relative degree of $T$ with respect to $W^{(h)}$. Since the argument is almost the same as in the previous results, we will only explain the differences in the proof.
  
 As with the previous theorems, we establish the variants given by Theorem \ref{I2} or Theorem \ref{Rh} with the quantities $\Dc_m(T)$ replaced by $\Dc_m^p(T)$, but with the additional difference that, when $l < h < n$, we now also allow an extra term of the form $O_{B^{(l)},h}(\deg(T) \deg_R(T,W^{(h+1)}))$ in the bound. Since every $T$ reduces strongly to level $n$, this clearly implies Theorem \ref{p}. 
 
 The case $l=h$ is trivial as before. Let us assume $l < h \le n$, that $T$ reduces strongly to level $h$ and that the result has been established for proper subsets of $T$ and if $T$ reduces strongly to level $h-1$.
 
Write $W_1^{(h)}, \ldots, W_q^{(h)}$ for those components of $W^{(h)}$ that are not in $\mathcal{C}_p$ and $W_{q+1}^{(h)}, \ldots, W_{r_h}^{(h)}$ for the remaining ones. Let us also write $W^{\ast}= W_{q+1}^{(h)} \cup \ldots \cup W_{r_h}^{(h)}$. 

We assume first that $\deg(T_{W^{\ast}}) \gtrsim_{B^{(h)}} \deg(T)$. Notice that if $\deg(S) \lesssim_n \deg(W^{(h)}) \delta(W^{(h)})^{h-\dim(S)}$ we can use Lemma \ref{BL} to find a polynomial $P$ of degree 
 $$\lesssim_n \delta(W^{(h)}) \lesssim_{B^{(h)}} \deg_R(T,W^{(h+1)}),$$
 vanishing on $S$ without vanishing identically on any irreducible component of $W^{(h)}$. Otherwise, we know by Lemma \ref{BL} that we can find a polynomial $P$ vanishing on $S$ without vanishing identically on any irreducible component of $W^{(h)}$ with $P$ having degree
\begin{equation}
\begin{aligned}
 \lesssim_{B^{(h)}} \left( \frac{\deg(S)}{\deg(W^{(h)})} \right)^{\frac{1}{h - \dim(S)}} &\lesssim_{B^{(h)}} \left( \frac{\deg(S)}{\deg(T)} \frac{\deg(T_{W^{\ast}})}{\deg(W^{\ast})} \right)^{\frac{1}{h - \dim(S)}}, \\
 &\lesssim_{B^{(h)}} \left( \frac{\deg(S)}{\deg(T)} \Dc_h^p(T) \right)^{\frac{1}{h - \dim(S)}}.
 \end{aligned}
 \end{equation}
 To establish the analogue of Theorem \ref{I2} we can then proceed as in the proof of that result to reduce the problem to the estimation of $\mathcal{I}(S,\tilde{T})$, where $\tilde{T} = T_P$ satisfies $\deg(\tilde{T}) \ge \deg(T)/2$ and 
\begin{equation}
\label{yy}
\deg_R(\tilde{T},W^{(h)}) \le \deg(P).
\end{equation}
Because of the bounds we have placed on $\deg(P)$ we can proceed exactly as in the proof of Theorem \ref{I2} to obtain a satisfactory estimate. Notice that the only difference is that if we have to deal with a component $T_r$ of the partition that reduces strongly to level $h-1$, this will give rise to an additional term of the form $O_{B^{(l)},h-1}(\deg(T_r) \deg_R(T_r,W^{(h)}))$ if $h \ge l+2$. However, this term is not problematic since this expression is bounded by
$$ \lesssim_{B^{(l)},h} \deg(T) \deg(P),$$
where we are using (\ref{yy}). Our bounds on $\deg(P)$ make this an acceptable term and this completes the analysis in this case.
 
 We may therefore assume from now on that $\deg(T_{W^{\ast}}) \le \epsilon \deg(T)$ for some small $\epsilon \gtrsim_{B^{(h)}} 1$. Let $r$ be as in the definition of a $p$-set. Upon discarding a set $S' \subseteq S$ contributing $O_{B^{(h)}} (\deg(S'))$ incidences, we may assume that all incidences involve an element of $\mathcal{P}_r(T_{W_i^{(h)}})$ for some $1 \le i \le r_h$. By the definition of being a $\delta$-property, Lemma \ref{q}, Bezout's theorem and Lemma \ref{BL}, we see that there exists some polynomial $f$ of degree $O_n(\delta(W^{(h)})) \sim_{B^{(h)}} \deg_R (T, W^{(h+1)})$ vanishing on $\bigcup_{j=1}^{q} \mathcal{P}_r(T_{W_j^{(h)}})$ without vanishing identically on $W_i^{(h)}$ for any $1 \le i \le r_h$.
 
 Consider the set of elements $S_{i,j}$ of $S \setminus S_f$ that lie in $W_i^{(h)} \cap W_j^{(h)}$ for some pair $1 \le i \le q < j \le r_h$. By definition of an entangled pair, we see there exists some polynomial of degree $\lesssim_n \delta(W^{(h)})$ vanishing on $S_{i,j}$ without vanishing identically on one of these two varieties. In particular, this gives us an algebraic set of dimension $h-1$ and degree $\lesssim_n \delta(W^{(h)}) \deg(W^{(h)})$ containing $S_{i,j}$. Applying Lemma \ref{BL} we can then conclude that there exists some polynomial $g$ of degree $\lesssim_{B^{(h)}} \delta(W^{(h)})$ vanishing on all sets $S_{i,j}$ of the form above without vanishing identically on any irreducible component of $W^{(h)}$.
 
Notice that, by construction, an element of $S \setminus S_{fg}$ can only be incident to an element of $T_{W^{\ast}}$. Since
\begin{equation}
\label{crisp}
\deg(f) + \deg(g) \lesssim_{B^{(h)}} \delta(W^{(h)}) \lesssim_{B^{(h)}} \deg_R (T,W^{(h+1)}),
\end{equation}
we have by Lemma \ref{CII} the estimate
 $$ \mathcal{I}(S,T) \le \mathcal{I}(S_{fg},T_{fg}) + \mathcal{I}(S \setminus S_{fg},  T_{W^{\ast}} \setminus T_{fg}) + O_{B^{(h)}}( \deg(T) \deg_R(T,W^{(h+1)}) ).$$
 We evaluate the second term by induction. Writing $P=fg$, thanks to our upper bound on $\deg(T_{W^{\ast}})$ we can proceed as in the proof of Theorem \ref{I2} to reduce the problem to the estimation of $\mathcal{I}(S_P,\tilde{T})$ with $\tilde{T}=T_P$ and $\deg(\tilde{T}) \ge \deg(T)/2$. Because $\deg(P)$ is bounded by (\ref{crisp}), we can use the same argument as in the proof of that result to obtain a satisfactory bound.
 
 For the analogue of Theorem \ref{Rh} we may again start by discarding a subset $S' \subseteq S$ to reduce to the case $S= \bigcup_{i=1}^{r_h} \mathcal{P}_r(T_{W_i^{(h)}})$. As in the proof of Theorem \ref{Rh}, we may now find some $h \le s \le n$ such that $M_{n-s}$ satisfies (\ref{EDP0}). It is easy to ensure that in this situation we have $\delta_{n-s}(W_1^{(h)}) < \eta \delta_{n-s+1} (W_1^{(h)})$ for some $\eta \gtrsim_{B^{(h)}} 1$ that is sufficiently small with respect to $B^{(h)}$, similarly as it was done in (\ref{nam}).
 
 We now separate the components of $W^{(s)}$ according to whether they belong to $\mathcal{C}_p$, as we did for $W^{(h)}$ in the proof of the analogue of Theorem \ref{I2}. In particular, we write $W^{\ast}$ for the union of those components that belong to $\mathcal{C}_p$. If $\deg(T_{W^{\ast}}) \lesssim_{B^{(h)}} \deg(T)$ we can proceed exactly as we did before in this case to obtain a satisfactory bound, using the fact that $\deg_R(T,W^{(s+1)}) \lesssim_{B^{(h)}} \deg_R(T,W^{(h+1)})$ by Lemma \ref{or} and that a polynomial of degree $\lesssim_{B^{(h)}} \delta(W^{(s)})$ that does not vanish identically on any irreducible component of $W^{(s)}$ will necessarily not vanish identically on any irreducible component of $W^{(h)}$. The latter observation follows from the fact that
 \begin{equation}
 \label{itai}
 \lesssim_{B^{(h)}} \delta(W^{(s)}) \lesssim_{B^{(h)}} \delta_{n-s}(W_i^{(h)}) < \delta_{n-s+1}(W_i^{(h)}),
 \end{equation}
for every $1 \le i \le r_h$, by Lemma \ref{q} and our choice of $\eta$.

We may therefore assume that  $\deg(T_{W^{\ast}}) \gtrsim_{B^{(h)}} \deg(T)$. But then the result follows by the same argument as in the proof of Theorem \ref{Rh}, noticing that by (\ref{hello}) we now have the inequality
$$ \frac{|S|}{\prod_{i=0}^{n-s} \delta_i(W_1^{(h)})} \lesssim_{B^{(h)}} \frac{|S|}{\deg(T)} \frac{\deg(T_{W^{\ast}})}{\deg(W^{\ast})} \lesssim_{B^{(h)}} \frac{|S|}{\deg(T)} \Dc_s^p (T),$$
allowing us to get a bound in terms of $\Dc_m^p(T)$ instead of $\Dc_m(T)$. This gives the desired result.


\begin{thebibliography}{30}
\bibitem{BB0} S. Barone, S. Basu, \emph{Refined bounds on the number of connected components of sign conditions on a variety}, Discrete Comput. Geom. {\bf 47} (2012), no. 3, 577-597.
\bibitem{BB} S. Barone, S. Basu, \emph{On a real analog of Bezout inequality and the number of connected components of sign conditions}, Proc. London Math. Soc. (3) {\bf 112} (2016) 115-145.
\bibitem{BS} S. Basu, M. Sombra, \emph{Polynomial partitioning on varieties of codimension two and point-hypersurface incidences in four dimensions}, Discrete Comput. Geom. {\bf 55} (1) (2016), 158-184.
\bibitem{BK} P. Brass, C. Knauer, \emph{On counting point-hyperplane incidences}, Comput. Geom. Theory Appl. 25 {\bf (1-2)} (2003), 13-20.
\bibitem{CP} M. Chardin, P. Philippon, \emph{Régularité et interpolation}, J. Algebraic Geom. {\bf 8} (1999), 471-481.
\bibitem{CEGSW} K. Clarkson, H. Edelsbrunner, L. Guibas, M. Sharir, E. Welzl. \emph{Combinatorial Complexity Bounds for Arrangements of Curves and Surfaces}, Discrete Comput. Geom. {\bf 5} (1990), 99-160.
\bibitem{DS} T. Do, A. Sheffer, \emph{A general incidence bound in $\R^d$ and related problems}, arXiv:1806.04230.
\bibitem{D} Z. Dvir, \emph{On the size of Kakeya sets in finite fields}, J. Amer. Math. Soc. {\bf 22} (2009), no. 4, 1093-1097.
\bibitem{DG}  Z. Dvir, S. Gopi, \emph{On the number of rich lines in truly high dimensional sets}, Proc. 31st Annu. Sympos. Comput. Geom. (2015), 584-598.
\bibitem{ES} G. Elekes, E. Szabo, \emph{How to find groups? (and how to use them in Erdös geometry?)}, Combinatorica {\bf 32} (5) (2012), 537-571. 
\bibitem{EH} J. Ellenberg, M. Hablicsek, \emph{An incidence conjecture of Bourgain over fields of positive characteristic}, Forum of Mathematics, Sigma, {\bf 4} (2016), E23.
\bibitem{FPSSZ} J. Fox, J. Pach, A. Sheffer, A. Suk, J. Zahl, \emph{Zarankiewicz's problem for semi-algebraic hypergraphs}, J. Eur. Math. Soc. (JEMS) {\bf 19} (6) (2017), 1785-1810.
\bibitem{G} L. Guth, \emph{The polynomial method in combinatorics}, University Lecture Series {\bf 64} (2016).
\bibitem{G4} L. Guth, \emph{Ruled surface theory and incidence geometry}, A Journey Through Discrete Mathematics, Springer, 2017, 449-466.
\bibitem{G5} L. Guth, \emph{Distinct distance estimates and low degree polynomial partitioning}, Disc. Comput. Geom., {\bf 53} (2) (2015), 428-444.
\bibitem{GK0} L. Guth, N. Katz, \emph{Algebraic Methods in Discrete Analogs of the Kakeya Problem}. Adv. Math.. {\bf 225} (5) (2010), 2828-2839.
\bibitem{GK} L. Guth, N. Katz, \emph{On the Erdös distinct distance problem in the plane}, Ann. of Math. (2) {\bf 181} (2015), no. 1, 155-190.
\bibitem{GZ} L. Guth, J. Zahl. \emph{Curves in $\R^4$ and two-rich points}, Discrete. Comput. Geom. {\bf 58} (2017), 232-253.
\bibitem{GZ2} L. Guth, J. Zahl, \emph{Algebraic curves, rich points, and doubly-ruled surfaces}, Amer. J. Math., in press.
\bibitem{Hart} R. Hartshorne, \emph{Algebraic geometry}. Graduate Texts in Mathematics, No. 52. Springer-Verlag, New York-Heidelberg, 1977.
\bibitem{HS} M. Hablicsek, Z. Scherr, \emph{On the number of rich lines in high dimensional real vector spaces}, Discrete Comput. Geom. {\bf 55} (2016), 955-962.
\bibitem{KMSS} H. Kaplan, J. Matousek, Z. Safernova, M. Sharir, \emph{Unit distances in three dimensions}, Comb. Probab. Comput. {\bf 21} (2012), 597-610.
\bibitem{KSS} H. Kaplan, M. Sharir, E. Shustin, \emph{On lines and joints}, Discrete Comput. Geom., {\bf 44} (4) (2010), 838-843.
\bibitem{Kz}  N. Katz, \emph{The flecnode polynomial: a central object in incidence geometry}, Proceedings of the International Congress of Mathematicians 2014.
\bibitem{K} J. Kollár, \emph{Szemerédi-Trotter-type theorems in dimension 3}, Adv. Math., {\bf 271} (2015) 30-61.
\bibitem{KST} T. Kövári, V. Sós, P. Turán, \emph{On a problem of K. Zarankiewicz} , Colloquium Math. {\bf 3} (1954), 50-57.
\bibitem{L} J. Landsberg, \emph{Cartan for Beginners: Differential Geometry Via Moving Frames and Exterior Differential Systems}, American Mathematica Society, 2003.
\bibitem{LSZ} B. Lund, A. Sheffer, F. De Zeeuw, \emph{Bisector energy and few distinct distances}, Discrete Comput. Geom. {\bf 56} (2016), 337-356.
\bibitem{M} J. Milnor, \emph{On the Betti numbers of real varieties}, Proc. Amer. Math. Soc. {\bf 15} (1964), 275-280.
\bibitem{OP} O. A. Oleinik, I. G. Petrovskii, \emph{On the topology of real algebraic surfaces}, Izvestiya Akad. Nauk SSSR. Ser. Mat. {\bf 13} (1949), 389-402.
\bibitem{PS} J. Pach and M. Sharir, \emph{On the number of incidences between points and curves}, Combin. Probab. Comput. {\bf 7} (1998), 121-127.
\bibitem{Q} R. Quilodrán, \emph{The joints problem in $\R^n$}, SIAM J. Discrete Math., {\bf 23} (4) (2010), 2211-2213.
\bibitem{SSS} M. Sharir, A. Sheffer, and N. Solomon, \emph{Incidences with curves in $\R^d$}, Electronic J. Combinat. 23 (2016), 4-16.
\bibitem{SS} M. Sharir, N. Solomon, \emph{Incidences between points and lines in $\R^4$}, Discrete Comput. Geom., {\bf 57} (3) (2017), 702-756.
\bibitem{SoT} J. Solymosi, T. Tao, \emph{An incidence theorem in higher dimensions}, Discrete Comput. Geom. {\bf 48} (2012), 255-280.
\bibitem{ST} E. Szemerédi, W.T. Trotter, \emph{Extremal problems in discrete geometry}, Combinatorica {\bf 3} (1983), 381-392.
\bibitem{T0} T. Tao, \emph{Algebraic combinatorial geometry: the polynomial method in arithmetic combinatorics, incidence combinatorics, and number theory}, EMS Surveys in Mathematical Sciences {\bf 1} (2014), 1-46.
\bibitem{T} R. Thom, \emph{Sur l'homologie des variétés algébriques réelles}, Differential and Combinatorial Topology (A Symposium in Honor of Marston Morse), Princeton Univ. Press, 1965, 255-265.
\bibitem{W} M. Walsh, \emph{Characteristic subsets and the polynomial method}, Proceedings of the International Congress of Mathematicians 2018.
\bibitem{W5} M. Walsh, \emph{Polynomial partitioning over varieties}, arXiv:1811.07865.
\bibitem{Z1} J. Zahl, \emph{An improved bound on the number of point-surface incidences in three dimensions}, Contrib. Discrete Math. {\bf 8} (2013), 100-121.
\bibitem{Z2} J. Zahl, \emph{A Szemerédi-Trotter type theorem in $R^4$}, Disc. Comput. Geom, {\bf 54} (3) (2015), 513-572.
\bibitem{Z} R. Zhang, \emph{Polynomials with dense zero sets and discrete models of the Kakeya conjecture and the Furstenberg set problem}, Selecta Mathematica (2016), 1-18.
\end{thebibliography}
\end{document}